\newtheorem{theorem}{Theorem}[section]
\newtheorem{remark}[theorem]{Remark}
\newtheorem{lemma}[theorem]{Lemma}
\newtheorem{proposition}[theorem]{Proposition}
\newtheorem{definition}[theorem]{Definition}
\newtheorem{assumption}[theorem]{Assumption}
\newcommand{\Halmos}{\mbox{\quad$\square$}}
\newcommand{\CC}{\mathbb{C}}
\newcommand{\EE}{\mathbb{E}}
\newcommand{\argmin}{\mathop{\rm argmin}}
\newcommand{\LCal}{\mathcal{L}}
\newcommand{\DCal}{\mathcal{D}}
\newcommand{\UCal}{\mathcal{U}}
\newcommand{\dom}{\textbf{dom}}
\newcommand{\interior}{\textbf{int}}
\newcommand{\br}{\mathbb{R}}
\newcommand{\ba}{\begin{array}}
\newcommand{\ea}{\end{array}}
\DeclareMathOperator{\diag}{diag}
\title{Non-Convex Self-Concordant Functions: Practical Algorithms and Complexity Analysis}
\author{Donald Goldfarb\thanks{Department of Industrial Engineering and Operations Research, Columbia University (\url{goldfarb@columbia.edu}).}
\and Lexiao Lai\thanks{Department of Mathematics, The University of Hong Kong (\url{lai.lexiao@hku.hk}).} 
\and Tianyi Lin\thanks{Department of Industrial Engineering and Operations Research, Columbia University (\url{tl3335@columbia.edu}).} 
\and Jiayu Zhang\thanks{Department of Industrial Engineering and Operations Research, Columbia University (\url{jz3824@columbia.edu}).}
}
\date{}
\begin{document}

\maketitle
\vspace*{-5mm}%
\begin{center}
    \textbf{Abstract}
    \end{center}
    \vspace*{-4mm}
 \begin{adjustwidth}{0.2in}{0.2in}
~~~~We extend the standard notion of self-concordance to non-convex optimization and develop a family of second-order algorithms with global convergence guarantees. In particular, two function classes -- \textit{weakly self-concordant} functions and \textit{$F$-based self-concordant} functions -- generalize the self-concordant framework beyond convexity, without assuming the Lipschitz continuity of the gradient or Hessian. 
For
these function classes, we propose a regularized Newton method and an adaptive regularization method that achieve an $\epsilon$-approximate first-order stationary point in $O(\epsilon^{-2})$ iterations. Equipped with an oracle capable of detecting negative curvature, the adaptive algorithm can further attain convergence to an approximate second-order stationary point. Our experimental results demonstrate that the proposed methods offer superior robustness and computational efficiency compared to cubic regularization and trust-region approaches, underscoring the broad potential of self-concordant regularization for large-scale and neural network optimization problems.
\end{adjustwidth}



\section{Introduction}\label{sec:intro}
Newton's method is one of the cornerstones of optimization, dating back to the early development of iterative methods. In convex optimization, it is known for its local quadratic convergence when the objective function is smooth and strongly convex~\cite{Wright-2006-Numerical}.
However, Newton's method is not suitable in non-convex settings without appropriate safeguards, as it may converge to saddle points or exhibit
slower global convergence
than first-order methods~\cite{Cartis-2010-Complexity}. To address these limitations, extensive research has focused on enhancing its global behavior through various regularization techniques. Among them, cubic regularization methods~\cite{Griewank-1981-Modification, Nesterov-2006-Cubic, Nesterov-2008-Accelerating, Cartis-2011-Adaptive, Cartis-2011-Adaptive2} control 
the search direction size by penalizing large curvature through a cubic term; trust-region methods~\cite{Conn-2000-Trust, Curtis-2017-Trust, Curtis-2018-Concise, Curtis-2023-Worst} restrict each update to a region where the local quadratic model is accurate; and regularized
(also referred to as ``damped'') Newton's methods~\cite{Levenberg-1944-Method,Marquardt-1963-Algorithm,Tikhonov-1963-Solution,Polyak-2009-Regularized,Mishchenko-2023-Regularized, Doikov-2024-Gradient, Gratton-2025-Yet}
balance curvature exploitation and stability via adaptive damping terms. Collectively, these approaches have established rigorous global convergence guarantees and strong empirical performance across a broad range of non-convex optimization problems.

However, these approaches suffer from some drawbacks~\cite{Mishchenko-2023-Regularized, Gratton-2025-Yet}. A primary one is 
implementation: both cubic regularization methods~\cite{Nesterov-2006-Cubic} and trust-region methods~\cite{Conn-2000-Trust} require solving a nontrivial subproblem at each iteration that involves cubic or constrained quadratic models rather than
a simple Hessian inversion. While efficient strategies exist for these subproblems (see, e.g., \cite{Agarwal-2017-Finding, Dussault-2024-Scalable, Gould-1999-Solving, Adachi-2017-Solving}), 
they nevertheless introduce additional implementation complexity compared to a simple regularized Newton update, and their efficient deployment in high-dimensional settings remains an active area of research.
Moreover, the theoretical guarantees of cubic regularization methods and trust region methods depend on strong smoothness conditions, such as the Lipschitz continuity of the Hessian~\cite{Nesterov-2006-Cubic, Nesterov-2008-Accelerating, Polyak-2009-Regularized, Cartis-2011-Adaptive, Cartis-2011-Adaptive2, Conn-2000-Trust, Curtis-2017-Trust, Curtis-2023-Worst, Mishchenko-2023-Regularized, Doikov-2024-Gradient, Gratton-2025-Yet},
and sometimes even of the gradient~\cite{Curtis-2017-Trust, Curtis-2023-Worst, Gratton-2025-Yet}. These conditions are often unrealistic in various applications.

In the convex setting,
the notion of self-concordance~\cite{Nesterov-1994-Interior, Nesterov-2018-Lectures},
(see Definition \ref{def:SC-convex} below), 
provides a unified framework for studying second-order methods without Hessian Lipschitz continuity. More specifically, a convex and three times continuously differentiable function $f:\dom(f)\to\mathbb{R}$ is defined to be $\kappa$-self-concordant if for all $x\in \dom(f)$,
$\nabla^3 f(x)[h,h,h] \leq 2\kappa(\nabla^2f(x)[h,h])^{1.5}$. 
In particular, self-concordant functions may possess unbounded third-order derivatives, yet the growth of derivatives is locally controlled by the curvature encoded in the Hessian. This structure ensures that Newton steps remain well-behaved in regions where Hessian Lipschitz continuity fails. A canonical example is the logarithmic barrier function, which is self-concordant but lacks a Lipschitz continuous gradient or Hessian. For such functions, second-order methods (e.g.,
damped Newton) achieve global convergence and local quadratic convergence, offering both theoretical guarantees
and computational practicality.
Unfortunately, extending self-concordance to non-convex settings remains nontrivial since its definition
involves the square root of a local curvature term, which can become ill-defined when the Hessian has negative eigenvalues. One might attempt to define self-concordance as $\nabla^3 f(x)[h,h,h] \leq 2\kappa(|\nabla^2f(x)[h,h]|)^{1.5}$ when $f$ is potentially non-convex, but this modification imposes restrictive curvature bounds that fail to capture simple non-convex structures; for example, the sigmoid function $x \mapsto 1/(1+e^{-x})$ violates this property. This gap motivated
our development of (i)
a generalized notion of self-concordance suitable for non-convex objective
functions that preserves
its curvature control while accommodating regions of negative curvature,
and (ii) the development and analysis of second-order algorithms that exploit this property.
\paragraph{Related work on self-concordance} The classical notion of self-concordance was introduced by~\cite{Nesterov-1994-Interior} as a fundamental tool for analyzing interior-point methods in convex optimization \cite{Ye-1994-nl, Nesterov-1997-Self, Guler-1997-Hyperbolic}. Since then, this notion has been
extended
to composite optimization problems~\cite{Tran-2015-Composite}, distributed Newton methods~\cite{Zhang-2015-Disco}, randomized Newton updates~\cite{Lu-2017-Randomized} and stochastic quasi-Newton methods~\cite{Gao-2019-Quasi}. In parallel, other work has studied stronger variants of this notion~\cite{Rodomanov-2021-Greedy,Hanzely-2022-Damped}, which impose tighter curvature control to improve numerical stability.
Another line of work has adopted the notion of quasi-self-concordance, which encompasses important
objectives such as logistic regression~\cite{Bach-2010-Self, Tran-2015-Composite, Bach-2014-Adaptivity}.
Finally, in~\cite{Sun-2019-Generalized},
the notion of generalized self-concordance was introduced. Building upon this,
Frank-Wolfe methods have been developed that demonstrate the versatility of self-concordance-based curvature control beyond the standard convex setting~\cite{Dvurechensky-2020-Self, Carderera-2021-Simple, Dvurechensky-2023-Generalized}.
We note that all of the above extensions of self-concordance apply \textbf{only to convex functions.}

\paragraph{Related work on regularized methods for non-convex optimization}
Numerous regularization techniques have been developed to ensure global convergence of Newton-type methods that are applied to minimizing non-convex functions.
Such strategies include cubic regularization~\cite{Nesterov-2006-Cubic, Nesterov-2008-Accelerating}, trust-region
~\cite{Conn-2000-Trust}, and
recent regularized Newton variants~\cite{Mishchenko-2023-Regularized, Doikov-2024-Gradient, Zhou-2025-Regularized, Gratton-2025-Yet}, all designed to stabilize Newton steps in the presence of non-convex curvature. A major milestone in this line of work is the adaptive cubic regularization method~\cite{Cartis-2011-Adaptive, Cartis-2011-Adaptive2}, which established a global iteration complexity of $O(\epsilon^{-1.5})$ for minimizing functions with Lipschitz continuous Hessians. This framework has inspired extensive research aimed at improving regularization strategies, reducing computational cost, and refining complexity guarantees~\cite{Curtis-2017-Trust, Curtis-2018-Concise, Curtis-2021-Trust, Curtis-2023-Worst, Dussault-2024-Scalable, Gratton-2025-Yet, Zhang-2025-Homogeneous}. Adaptive cubic regularization has also been extended to constrained non-convex settings, including problems with convex constraints~\cite{Cartis-2012-Adaptive} and those involving conic constraints equipped with logarithmically homogeneous self-concordant barriers~\cite{Dvurechensky-2025-Hessian}. To enhance computational efficiency, several works have exploited Hessian-vector products as major oracles, achieving convergence rates on the order of $O(\epsilon^{-7/4})$ while avoiding explicit Hessian computations~\cite{Agarwal-2017-Finding, Carmon-2018-Accelerated}. A complementary line of research~\cite{Mccormick-1977-Modification, More-1979-Use, Goldfarb-1980-Use} has leveraged negative curvature directions to escape saddle points and approach approximate second-order stationary points. Building on these ideas,~\cite{Royer-2018-Complexity, Royer-2020-Newton, He-2025-Newton} proposed hybrid line-search algorithms that alternate between Newton updates and negative-curvature steps identified through the Lanczos method~\cite{Kuczynski-1992-Estimating}, offering a robust trade-off between curvature exploitation and global progress.

\paragraph{Contributions} This paper extends the notion of self-concordance to non-convex optimization and develops corresponding second-order methods with theoretical guarantees and empirical validation. As in convex settings, our framework accommodates functions with unbounded higher-order derivatives, thereby covering a broad range of applications. Our contributions can be summarized as follows:
\begin{enumerate}
\item We introduce two function classes -- weakly self-concordant and $F$-based self-concordant functions -- that extend the classical notion of self-concordance to non-convex settings. These classes preserve key curvature
properties
while encompassing many objectives arising in machine learning.
\item We propose regularized Newton and adaptive regularization methods
that make use of our weakly and $F$-based self-concordant notions
and prove their global and local convergence
without
requiring
gradient or Hessian Lipschitz continuity, and
any line-search. We extend the adaptive regularization methods~\cite{Cartis-2011-Adaptive, Cartis-2011-Adaptive2, Cartis-2022-Evaluation} to constrained problems with self-concordant barriers, where the regularization parameter is tuned automatically.
\item We conduct experiments on non-negative matrix factorization and neural network training, demonstrating that our methods
achieve superior robustness and efficiency compared with existing second-order approaches.
\end{enumerate}

\paragraph{Organization} In Section~\ref{sec:prelims}, we define non-convex self-concordant functions, and provide three examples of their applicability. In Section~\ref{sec:algorithms}, we describe our methods and present convergence results. 
In Section~\ref{sec:exp}, we present empirical results complementing our theoretical results, 
and offer some conclusions in Section~\ref{sec:conclu}. 
In an Appendix
we present some additional results on weakly self-concordant functions, including an inexact proximal point method for minimizing them.

\section{Non-Convex Weakly and $F$-based Self-Concordant Functions}\label{sec:prelims}
Letting $f:\br^n\rightarrow \br \cup \{+\infty\}$ be three-times continuously differentiable on $\dom(f) = \{x \in \br^n \mid f(x) < +\infty\}$, we consider the problem  
\begin{equation}\label{prob:unconstrained}
\min_{x\in \br^n} f(x).
\end{equation}
\begin{definition}~{\rm \cite[Definition 5.1.1]{Nesterov-2018-Lectures}}
\label{def:SC-convex}
Let $\kappa>0$. A closed and convex function $f: \br^n \to \br \cup \{+\infty\}$ is $\kappa$-self-concordant if $\dom(f)$ is open, $f$ is three times continuously differentiable on $\dom(f)$, and $\nabla^3 f(x)[h,h,h] \le 2\kappa \big(\nabla^2 f(x)[h,h]\big)^{3/2}$ for all $x \in \dom(f)$ and $h \in \br^n$.
\end{definition}

We now extend this notion to non-convex settings.
\begin{definition}\label{def:SC-nonconvex-1}
A function $f: \mathbb{R}^n \to \mathbb{R} \cup \{+\infty\}$ is $(\kappa,\ell)$-weakly self-concordant if the function $g(x) = f(x) + \frac{\ell}{2}\|x\|^2$ is $\kappa$-self-concordant and satisfies $\nabla^2g(x)\succ 0$ for all $x\in\dom(f)$.
\end{definition}
Weakly self-concordant functions form a subclass of weakly convex functions~\cite{Davis-2019-Stochastic}. Indeed, $f$ is $(\kappa,\ell)$-weakly self-concordant if and only if $f(x) + \frac{\ell}{2}\|x\|^2$ is closed,
convex, 
with a Hessian that is non-degenerate,
and
\begin{equation*}
\nabla^3 f(x)[h,h,h] = \nabla^3\left(f + \tfrac{\ell}{2}\|\cdot\|^2\right)(x)[h,h,h] \leq 2\kappa \left(\nabla^2 f(x)[h,h] + \ell\|h\|^2\right)^{3/2},
\end{equation*}
for all $x \in \dom(f)$ and $h \in \br^n$.
In particular, if $f$ is $\ell$-weakly convex and satisfies $|\nabla^3 f(x)[h,h,h]| \leq m\|h\|^3$ for all $x\in\dom(f)$, then $f$ is $(\frac{m}{2a^{3/2}}, \ell + a)$-weakly self-concordant for any $a > 0$.

The requirement of $\nabla^2g(x)\succ 0$ is technical and can be circumvented by choosing a larger $\ell$, assuming that $f$ is weakly convex. In fact, when implementing variants of Newton's method for convex self-concordant functions, the objective is also assumed to be strictly convex~\cite{Nesterov-2018-Lectures}.

\begin{definition}\label{def:SC-nonconvex-2}
Given a base function $F: \mathbb{R}^n \to \mathbb{R} \cup \{+\infty\}$, we say that $f$ is \emph{$F$-based $\kappa$-self-concordant} if $\dom(f) \subseteq \dom(F)$ and $g = f + F$ is $\kappa$-self-concordant and satisfies $\nabla^2g(x)\succ 0$ for all $x\in\dom(f)$.
\end{definition}

Again, the requirement of $\nabla^2g(x)\succ 0$ is technical and can be circumvented by adding a quadratic term to $F$. Classical strictly convex self-concordant functions are $0$-based self-concordant. 
Moreover, $(\kappa,\ell)$-weakly self-concordance is equivalent to $\frac{\ell}{2}\|\cdot\|^2$-based $\kappa$-self-concordance.
This formulation of $F$-based self-concordance unifies analysis across different settings and enjoys convenient closure properties.
\begin{proposition}\label{prop:SC-closure}
Let $\alpha_1,\alpha_2 > 0$. If $f_i$ is $F_i$-based $\kappa_i$-self-concordant for $i=1,2$, then $\alpha_1f_1+\alpha_2f_2$ is $(\alpha_1F_1+\alpha_2F_2)$-based $\max(\frac{\kappa_1}{\sqrt{\alpha_1}}, \frac{\kappa_2}{\sqrt{\alpha_2}})$-self-concordant.
\end{proposition}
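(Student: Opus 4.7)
The plan is to reduce the claim to classical self-concordance by unfolding Definition~\ref{def:SC-nonconvex-2}. By hypothesis, $g_i := f_i + F_i$ is $\kappa_i$-self-concordant in the sense of Definition~\ref{def:SC-convex} for $i=1,2$. Since $(\alpha_1 f_1+\alpha_2 f_2)+(\alpha_1 F_1+\alpha_2 F_2) = \alpha_1 g_1 + \alpha_2 g_2$, it suffices to show that $\alpha_1 g_1 + \alpha_2 g_2$ is $\kappa$-self-concordant with $\kappa=\max(\kappa_1/\sqrt{\alpha_1},\kappa_2/\sqrt{\alpha_2})$. The domain inclusion $\dom(\alpha_1 f_1+\alpha_2 f_2)=\dom(f_1)\cap\dom(f_2)\subseteq \dom(F_1)\cap\dom(F_2)=\dom(\alpha_1 F_1+\alpha_2 F_2)$ follows from $\dom(f_i)\subseteq\dom(F_i)$, closedness and convexity are preserved by positive combinations, and openness of the domain together with $C^3$-smoothness persist under intersection, so the regularity hypotheses of Definition~\ref{def:SC-convex} carry over.

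Next, I would invoke two standard scaling/additivity properties of self-concordant functions. First, homogeneity: for $\alpha>0$, if $g$ is $\kappa$-self-concordant then $\alpha g$ is $(\kappa/\sqrt{\alpha})$-self-concordant. This is a one-line computation from the definition, since both sides of the defining inequality scale as $\alpha$ on the left and $\alpha^{3/2}$ inside the $3/2$-power on the right, yielding the factor $1/\sqrt{\alpha}$. Applying this to $g_i$ with $\alpha_i$ gives that $\alpha_i g_i$ is $(\kappa_i/\sqrt{\alpha_i})$-self-concordant.

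Second, additivity: the sum of a $\kappa_1'$- and a $\kappa_2'$-self-concordant function is $\max(\kappa_1',\kappa_2')$-self-concordant. To show this, for any $x$ in the common domain and any $h\in\br^n$, I would write
\begin{equation*}
\nabla^3(\alpha_1 g_1+\alpha_2 g_2)(x)[h,h,h] \leq 2\kappa_1' \bigl(\nabla^2(\alpha_1 g_1)(x)[h,h]\bigr)^{3/2} + 2\kappa_2' \bigl(\nabla^2(\alpha_2 g_2)(x)[h,h]\bigr)^{3/2},
\end{equation*}
with $\kappa_i'=\kappa_i/\sqrt{\alpha_i}$, and then bound the right-hand side by $2\max(\kappa_1',\kappa_2')\bigl(a_1^{3/2}+a_2^{3/2}\bigr)$ where $a_i=\nabla^2(\alpha_i g_i)(x)[h,h]\geq 0$ (nonnegativity uses convexity of $g_i$). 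The elementary super-additivity inequality $a_1^{3/2}+a_2^{3/2}\leq (a_1+a_2)^{3/2}$ for $a_1,a_2\geq 0$ (which follows by monotonicity of $t\mapsto t^{3/2}$ on $[0,\infty)$ and $a_i\leq a_1+a_2$) then collapses the bound into the required form $2\max(\kappa_1',\kappa_2')\bigl(\nabla^2(\alpha_1 g_1+\alpha_2 g_2)(x)[h,h]\bigr)^{3/2}$.

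Combining these two steps yields the claimed constant $\max(\kappa_1/\sqrt{\alpha_1},\kappa_2/\sqrt{\alpha_2})$, finishing the proof. I do not anticipate a real obstacle here: the result is a routine closure property once one observes that $F$-based self-concordance is defined precisely so that the classical convex calculus for self-concordant functions applies to $f+F$. The only small care point is ensuring the Hessian quadratic forms on which we take the $3/2$-power are nonnegative, which is where the convexity of the shifted functions $g_i=f_i+F_i$ is used.
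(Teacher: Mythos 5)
Your proof is correct and follows the same reduction as the paper: both note that $(\alpha_1 f_1+\alpha_2 f_2)+(\alpha_1 F_1+\alpha_2 F_2)=\alpha_1(f_1+F_1)+\alpha_2(f_2+F_2)$ and then reduce to showing that this positive combination is $\max(\kappa_1/\sqrt{\alpha_1},\kappa_2/\sqrt{\alpha_2})$-self-concordant. The paper closes this by simply citing \cite[Theorem~5.1.1]{Nesterov-2018-Lectures}, the standard scaling-and-additivity closure theorem for self-concordant functions, whereas you re-derive that theorem directly: the homogeneity computation for $\alpha g$ and the additivity step via $a_1^{3/2}+a_2^{3/2}\le(a_1+a_2)^{3/2}$ for $a_1,a_2\ge 0$ (with nonnegativity supplied by convexity of $g_i=f_i+F_i$) are both correct. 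The content of the two proofs is therefore identical; your version is simply more self-contained, and you also spell out the domain and regularity bookkeeping that the paper leaves implicit.
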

\begin{proof}{Proof}
Since
$f_i+F_i$ is $\kappa_i$-self-concordant for $i=1,2$, 
~\cite[Theorem 5.1.1]{Nesterov-2018-Lectures} shows that $\alpha_1(f_1+F_1) + \alpha_2(f_2+F_2)$ is $\max(\frac{\kappa_1}{\sqrt{\alpha_1}}, \frac{\kappa_2}{\sqrt{\alpha_2}})$-self-concordant. This implies the desired result. \Halmos
\end{proof}

\subsection{Examples}
We now present three representative examples of $F$-based self-concordant functions with full details.
\paragraph{Generalized Phase Retrieval (GPR)}
As in~\cite{Sun-2018-Geometric},
the GPR loss function $f: \br^{2n} \to \br$
is defined as
\begin{equation*}
f(z^R, z^I) = \tfrac{1}{2m}\sum_{k=1}^m (|a_k^* z|^2-y_k^2)^2, \quad \textnormal{ where }  a_k = \tfrac{1}{\sqrt2}(a_k^R +a_k^I\sqrt{-1})\in \CC^n, \ z = z^R+z^I\sqrt{-1},
\end{equation*}
and
$a_k^*$ represents the conjugate transpose of $a_k$.
$f(z^R, z^I)$ is a $(\kappa,\ell)$-weakly self-concordant function for some $\kappa>0$ and $\ell>0$. Indeed, we can write $f(z^R, z^I)$ as
\begin{equation*}
f(z^R, z^I) = f(x) =\tfrac{1}{2m}\sum_{k=1}^m
\left(\tfrac{1}{2}
((c(1)_k^\top x)^2+(c(2)_k^\top x)^2) -y_k^2\right)^2,
\end{equation*}
where $x^\top = ((z^R)^\top,(z^I)^\top)$,
$c(1)_k^\top = ((a_k^R)^\top, (a_k^I)^\top)$ and $c(2)_k^\top =(-(a_k^I)^\top, (a_k^R)^\top)$.

By Proposition \ref{prop:SC-closure}, to prove that $f(z^R, z^I) = f(x)$ is $F$-based or weakly self-concordant we only need to show this for each term in the above summation, and in particular for $g(x) = ((\bar{x}_1)^2+(\bar{x}_2)^2)^2$, where for the $k$-th term  $\bar{x_i} \equiv (c(i)_k^\top x)$ for
$i = 1,2$.
Defining
$\bar{h_i} \equiv (c(i)_k^\top h)$ for
$i = 1,2$, we have that
\begin{equation*}
\nabla^2 g(x)[h,h] = 8(\bar x_1 \bar h_1+\bar x_2 \bar h_2)^2 + 4(\bar x_1^2+\bar x_2^2)(\bar h_1^2+ \bar h_2^2),
\end{equation*}
and for $\ell \geq \|c(1)_k\|^2+\|c(2)_k\|^2$,
that
\begin{equation*}
(\nabla^2g(x)[h,h]+\ell\|h\|^2)^{\frac{3}{2}} \geq (\bar{h}_1^2+\bar{h}_2^2)^{3/2}\left(4(\bar{x}_1^2+\bar{x}_2^2)+1\right)^{3/2},
\end{equation*}
\begin{equation*}
\nabla^3 g(x) [h,h,h]  = 24(\bar x_1 \bar h_1 + \bar x_2 \bar h_2)(\bar h_1^2+\bar h_2^2) \leq 24\sqrt{\bar{x}_1^2+\bar{x}_2^2}\;(\bar{h}_1^2+\bar{h}_2^2)^{3/2}.
\end{equation*}
Hence,
$g$ is $(3, \ell)$-weakly self-concordant.

\paragraph{Polynomial Optimization Problems (POPs)}
The above results for GPR can be generalized; indeed, any polynomial function is $F$-based self-concordant,
and hence, also
{\it ``$F$-based convex'',} as shown below in Proposition~\ref{prop:SC-poly},
for some function $F$. However, many polynomial functions (e.g., $-x^4 + 3x^2$) are not weakly convex and hence not weakly self-concordant. Therefore, it is necessary to incorporate the concept of $F$-based self-concordance into our analysis.

\begin{proposition}\label{prop:SC-poly}
Let $p \geq 2$ be an integer. Suppose that $f: \br^n \to \br$ is a multivariate polynomial with degree $\deg(f) \leq 2p$. Then, $F(x)=(\|x\|^2+1)^p$ is convex and there exists $m \geq 0$ such that $f$ is $mF$-based $1$-self-concordant.
\end{proposition}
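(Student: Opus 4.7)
My plan is to (i) dispatch the convexity of $F$ by a one-line composition argument, (ii) derive explicit polynomial growth bounds on the Hessian and third derivative of both $f$ and $F$, and (iii) choose $m$ large enough that the $3/2$-power of the Hessian of $f+mF$ dominates the (signed) third derivative. Convexity of $F$ follows from writing $F=\phi\circ u$ with $\phi(t)=(t+1)^p$ convex and nondecreasing on $[0,\infty)$ and $u(x)=\|x\|^2$ convex; equivalently, the direct computation
\[
\nabla^2 F(x) = 2p(\|x\|^2+1)^{p-1} I + 4p(p-1)(\|x\|^2+1)^{p-2}\, xx^\top \succeq 0
\]
yields the key lower bound $\nabla^2 F(x)[h,h] \ge 2p(\|x\|^2+1)^{p-1}\|h\|^2$ that will drive the whole argument.

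Since $f$ is a polynomial of degree at most $2p$, each entry of $\nabla^2 f(x)$ is a polynomial in $x$ of degree at most $2p-2$ and each entry of $\nabla^3 f(x)$ is a polynomial of degree at most $2p-3$. Using $\|x\|^k \le (\|x\|^2+1)^{k/2}$ monomial-by-monomial, I would obtain constants $C_f,C_f'$ depending only on $f$ such that $|\nabla^2 f(x)[h,h]| \le C_f(\|x\|^2+1)^{p-1}\|h\|^2$ and $|\nabla^3 f(x)[h,h,h]| \le C_f'(\|x\|^2+1)^{(2p-3)/2}\|h\|^3$. A parallel direct computation of $\nabla^3 F$ produces a constant $C_F$ with $|\nabla^3 F(x)[h,h,h]| \le C_F(\|x\|^2+1)^{(2p-3)/2}\|h\|^3$; the exponent $(2p-3)/2$ is the right one because the leading term of $\nabla^3 F$ is of the form $(\|x\|^2+1)^{p-3}(x^\top h)^3$ with $\|x\|^3 \le (\|x\|^2+1)^{3/2}$.

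I would then select $m$ large enough to guarantee simultaneously (a) $2mp - C_f \ge mp$, i.e.\ $m \ge C_f/p$, so that $\nabla^2(f+mF)(x)[h,h] \ge mp(\|x\|^2+1)^{p-1}\|h\|^2 \ge 0$ (ensuring $f+mF$ is convex on all of $\br^n$), and (b) $2(mp)^{3/2} \ge C_f' + mC_F$, which is achievable since the left side grows like $m^{3/2}$ while the right side is only linear in $m$. Combining the bounds for such $m$ gives, for every $x,h$,
\[
|\nabla^3(f+mF)(x)[h,h,h]| \le (C_f'+mC_F)(\|x\|^2+1)^{(2p-3)/2}\|h\|^3 \le 2(mp)^{3/2}(\|x\|^2+1)^{3(p-1)/2}\|h\|^3 \le 2\bigl(\nabla^2(f+mF)(x)[h,h]\bigr)^{3/2}.
\]
Replacing $h$ by $-h$ handles the sign, and since $\dom(f+mF)=\br^n$ is open and $f+mF$ is $C^\infty$, this verifies all requirements of Definition~\ref{def:SC-convex} with $\kappa=1$.

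\textbf{Main obstacle.} The substantive step is aligning the two growth rates so that the $3/2$-power of the Hessian exponent $p-1$ meets or exceeds the third-derivative exponent $(2p-3)/2$; this reduces to the inequality $3(p-1) \ge 2p-3$, i.e.\ $p \ge 0$, which holds trivially for $p \ge 2$. Beyond this exponent-matching, the argument is constant-chasing, and the existence of an admissible $m$ comes precisely from the $m^{3/2}$ versus $m$ scaling gap between the two sides of the self-concordance inequality.
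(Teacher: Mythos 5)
Your proposal is correct and follows essentially the same route as the paper's proof: bound $\nabla^2 f$ and $\nabla^3 f$ by polynomial growth in $(\|x\|^2+1)$ with exponents $p-1$ and $p-3/2$ respectively, establish the lower bound $\nabla^2 F(x)[h,h]\ge 2p(\|x\|^2+1)^{p-1}\|h\|^2$, bound $\nabla^3 F$ with the same exponent $p-3/2$, and then take $m$ large. You spell out more explicitly than the paper why an admissible $m$ exists (the $m^{3/2}$-versus-$m$ scaling gap and the exponent inequality $3(p-1)\ge 2p-3$), whereas the paper simply asserts the existence; this added detail is a welcome completion of an argument the paper leaves terse, but it is not a different method.
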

\begin{proof}{Proof}
Since $\deg(f) \leq 2p$ and $\deg(F) = 2p$, each term of $\nabla^2 f(x)$, $\nabla^3 f(x)$, and  $\nabla^3 F(x)$, is respectively, a polynomial of degree at most $2p-2$, $2p-3$, and exactly $2p-3$. There exist $c_1,c_2\geq 0$ and $c_3 > 0$ such that, for all $x,h \in \br^n$, we have $|\nabla^2 f(x)[h,h]| \leq c_1(1+\|x\|^2)^{p-1}\|h\|^2$,
$|\nabla^3 f(x)[h,h,h]| \leq c_2(1+\|x\|^2)^{p-3/2}\|h\|^3$, 
and
$|\nabla^3 F(x)[h,h,h]| \leq c_3(1+\|x\|^2)^{p-\frac{3}{2}}\|h\|^3$.

Defining
$\varphi(r)=(r+1)^p$,
we have $\nabla F(x)=2\varphi'(\|x\|^2)x$, $\nabla^2 F(x)=2\varphi'(\|x\|^2)I+4\varphi''(\|x\|^2)xx^\top$, and for all $h \in \br^n$,
\begin{equation*}
\nabla^2 F(x)[h,h] = 2\varphi'(\|x\|^2)\|h\|^2 + 4\varphi''(\|x\|^2)(x^\top h)^2 \geq 2p(\|x\|^2+1)^{p-1}\|h\|^2.
\end{equation*}
Thus, $F$ is convex,
$\nabla^2\left(f+mF\right)(x)[h,h] \geq (2pm-c_1)(\|x\|^2+1)^{p-1}\|h\|^2$
and $|\nabla^3 \left( f+mF\right)(x)[h,h,h]| \leq (c_2+m c_3)(\|x\|^2+1)^{p-3/2}\|h\|^3$, which show that
there exists $m \geq 0$ such that $f+mF$ is $1$-self-concordant. \Halmos
\end{proof}
We note that there are many
constrained 
POPs, $\min_{x \in C} f(x)$, where $f$ is a polynomial, and
there exists a self-concordant barrier function $g: \br^n \to \br$ such that $\dom(g) = \interior(C)$. By Proposition~\ref{prop:SC-poly}, we have $f+F$ is self-concordant on $\br^n$ for some $F:\br^n\to \br$. Thus, we have $f + F + g$ is self-concordant on $\interior(C)$ and $f\mid_{\interior(C)}$
is $(F+g)$-based self-concordant. This perspective provides numerous examples of $F$-based self-concordant functions, such as
matrix factorization $\min_{U, V} \|M - UV^\top\|_F^2$
and sensor network localization~\cite{Tang-2024-Riemannian} $\min_{x_1, \ldots, x_n\in\br^d} \sum_{(i,j) \in \mathcal{E}} \bigl(\|x_i - x_j\|^2 - d_{ij}^2\bigr)^2$, where $\mathcal{E}\subseteq \{(i,j):i<j,i,j\in \{1,...,n\}\}$ and $d_{ij}\geq 0$ are constants.

Many problems
other than POPs involve loss functions that can be modified to be $F$-based self-concordant for some function $F$.
Our next example SDL is such a problem. We define $h_\alpha(x) = \alpha^{-1}(\log(1+e^{\alpha x})+\log(1+e^{-\alpha x}))$. For large $\alpha>0$, $h_\alpha$ serves as a smooth approximation to $|x|$ \cite{Schmidt-2007-Fast}, and it is weakly self-concordant. After some calculation, it can be verified that $\frac{\partial^2}{\partial x^2} h_\alpha(x) = \frac{\alpha}{2}(1-\tanh^2(\frac{\alpha x}{2}))$, $\frac{\partial^3}{\partial x^3} h_\alpha(x) = -\frac{\alpha^2\tanh(\alpha x/2)}{2\cosh^2(\alpha x/2)}$, and that both of $\left|\frac{\partial^2}{\partial x^2}h_\alpha(x)\right|$ and $\left|\frac{\partial^3}{\partial x^3}h_\alpha(x)\right|$ are bounded.

\paragraph{Sparse Dictionary Learning (SDL)} As introduced in \cite{Mairal-2008-Supervised}, we consider the optimization problem
\begin{equation*}
\min_{\|d_i\|\leq 1, i \in N } f(D, r_1, \ldots, r_K) = \sum_{i=1}^K \|x_i - D r_i\|^2 + \lambda\left(\sum_{i=1}^K \sum_{j=1}^n |r_i^{(j)}|\right),
\end{equation*}
where $N = \{ 1,\ldots,n\}$, $x_i \in \br^m$, $r_i \in \br^n$, $D \in \br^{m \times n}$, $d_i$ denotes the $i$-th column of $D$, $r_i^{(j)}$ denotes the $j$-th entry of $r_i$, and $\lambda>0$. We modify $f$ in two ways; specifically, we
apply $h_\alpha$ to smoothly approximate $|r_i^{(j)}|$
and introduce
the self-concordant function $g(d_i)=-\log(1-\|d_i\|^2)$ (see \cite[Theorem 5.1.4]{Nesterov-2018-Lectures}) to impose $\|d_i\| <1$. This yields the following approximation
\begin{equation*}
f_{\alpha, \mu}(D, r_1, \ldots, r_K) = \sum_{i=1}^K \|x_i - D r_i\|^2 + \lambda\left(\sum_{i=1}^K \sum_{j=1}^n h_\alpha(r_i^{(j)})\right) + \mu \left(\sum_{i=1}^n g(d_i)\right),
\end{equation*}
to $f$.
Propositions~\ref{prop:SC-closure} and \ref{prop:SC-poly} imply that $f_{\alpha,\mu}$ is $F$-based $1$-self-concordant.

\paragraph{Nonnegative Matrix Factorization (NMF)} Let $\br_+ = \{x \in \br: x \geq 0\}$ and $\br_{++} = \{x \in \br: x > 0\}$.
We consider here two alternative formulations of the problem of approximating a nonnegative matrix $Z$ by the product $XY$ of two nonnegative matrices $X$ and $Y$ \cite{Lee-2000-Algorithms}. The first formulation measures the difference $D$ between $Z$ and $XY$ by the square of the Frobenius norm of $D$ (i.e., the mean squared error (MSE)), while the second formulation measures $D$ by the
Kullback–Leibler (KL) divergence between $Z$ and $XY$; see~\eqref{prob:NMF_1} and 
\eqref{prob:NMF_2}
below.
Both fit into our non-convex self-concordant framework. Indeed, the first formulation yields the optimization problem
\begin{equation}\label{prob:NMF_1}
\min_{X \in \br_{++}^{m \times r}, Y \in \br_{++}^{r \times n}} f_{\text{MSE}}(X,Y) = \tfrac{1}{2mn}\|Z-XY\|_F^2 = \tfrac{1}{2mn} \left(\sum_{i=1}^{m}\sum_{j=1}^n \left(Z_{ij} - \sum_{k=1}^r X_{ik} Y_{kj}\right)^2\right),
\end{equation}
where $Z \in \br^{m \times n}_+$. By Proposition \ref{prop:SC-poly}, $f_\text{MSE}$ is $\ell F_\text{MSE}$-based $\kappa$-self-concordant for $F_\text{MSE}(X,Y)=(\|X\|_F^2+\|Y\|_F^2+1)^2 - \sum_{i, k} \log(X_{ik}) - \sum_{k, j} \log(Y_{kj})$ and some $\ell,\kappa>0$. 
The second formulation yields the problem
\begin{equation}\label{prob:NMF_2}
\min_{X \in \br_{++}^{m \times r}, Y \in \br_{++}^{r \times n}} f_{\text{KL}}(X, Y) = \tfrac{1}{mn} \left(\sum_{i=1}^m \sum_{j=1}^n D\left(Z_{ij} \Big\Vert \sum_{k=1}^r X_{ik} Y_{kj} \right)\right),
\end{equation}
where the KL divergence between $x$ and $y$ is defined as $D(x \Vert y) = x\log(x/y) - x + y$.

To prove that $f_{\text{KL}}(X,Y)$ is $\ell F_{\text{KL}}$-based $\kappa$-self-concordant for some convex function $F_{\text{KL}}$ and for some $\ell, \kappa>0$ on $\br_{++}^{m\times r} \times \br_{++}^{r \times n}$,
let $g(x,y)=-\log(x^\top y)$ and $G(x,y)=-\sum_{i=1}^r \log(x_iy_i)$. By Proposition \ref{prop:SC-closure}, it suffices to prove that $g$ is $\tau G$-based $1$-self-concordant for some $\tau>3$. Fixing $(x,y) \in \br_{++}^r \times \br_{++}^r$ and $h = (h_x, h_y) \in \br^r \times \br^r$, we define
\begin{equation*}
s=x^\top y, \quad L=h_x^\top y + h_y^\top x, \quad u_i=\tfrac{|(h_x)_i|}{x_i}, \quad v_i=\tfrac{|(h_y)_i|}{y_i}, \quad w_i=\tfrac{x_i y_i}{s}, \quad S=\textstyle\sum_{i=1}^r(u_i^2+v_i^2).
\end{equation*}
Then, we have
\begin{equation*}
\tfrac{|L|}{s} \leq \textstyle\sum_{i=1}^r w_i(|u_i|+|v_i|) \leq \sqrt{2\left(\textstyle\sum_{i=1}^r w_i(u_i^2+v_i^2)\right)}
\leq \sqrt{2S}, \quad \tfrac{|h_x^\top h_y|}{s} \leq \textstyle\sum_{i=1}^r \tfrac{|(h_x)_i||(h_y)_i|}{x_i y_i} \leq \tfrac{1}{2}S.
\end{equation*}
It follows that
\begin{equation*}
|\nabla^2 g(x,y)[h,h]| = \left|\tfrac{L^2}{s^2}-\tfrac{2h_x^\top h_y}{s}\right| \leq 3S,\quad |\nabla^3 g(x,y)[h,h,h]| = \left|-2\tfrac{L^3}{s^3}+\tfrac{6Lh_x^\top h_y}{s^2}\right| \leq 7\sqrt{2}S^{3/2}.
\end{equation*}
In addition, we obtain that $\nabla^2 G(x,y)[h,h]=S$ and $|\nabla^3 G(x,y)[h,h,h]| \leq 2(\sum_{i=1}^r (u_i^3+v_i^3)) \leq 2S^{3/2}$. Thus, we have
\begin{equation*}
\nabla^2(g+\tau G)[h,h]\geq(\tau-3) S, \quad \nabla^3(g+\tau G)[h,h,h] \leq  (2\tau+7\sqrt{2})S^{3/2}.
\end{equation*}
This implies that $|\nabla^3(g+\tau G)[h,h,h]| \leq 2(\nabla^2(g+\tau G)[h,h])^{3/2}$ for some $\tau>3$. This yields the desired result, i.e.,
 $F_{\text{KL}}(X,Y)= - \frac{1}{mn} (\sum_{i, k} \log(X_{ik}) + \sum_{k, j} \log(Y_{kj}))$ and $\ell > 3$.

\subsection{Descent 
Guarantees for $F$-based $\kappa$-self-concordant Functions}
For $F$-based $\kappa$-self-concordant functions, where $F$ is a convex function, we now establish
a lower bound on the decrease that is achievable by an appropriate choice of the step length along a direction of descent.
For simplicity, we define two auxiliary functions $\omega,\omega_\star:[0,\infty)\to\br\cup\{+\infty\}$ as follows:
\begin{equation*}
\omega(z) = z-\log(1+z) \quad
\textnormal{ and }
\quad\omega_\star(z) = \begin{cases} -z-\log(1-z) & \textnormal{if } z < 1,\\ +\infty & \textnormal{otherwise}. \end{cases}
\end{equation*}
This notation is standard in convex self-concordant analysis \cite{Nesterov-2018-Lectures, Sun-2019-Generalized}.
In addition, we define
\begin{equation}\label{def:Gamma_f}
\Gamma_f(x) = \sup\left\{t \geq 0: t-\log(1+t) \leq \kappa^2\left(f(x)-\inf_{z \in \br^n} f(z)\right)\right\}.
\end{equation}
\begin{proposition}\label{prop:descent}
Suppose that $f$ is $F$-based $\kappa$-self-concordant and $F$ is convex. Then, for any $x \in \dom(f)$ and any descent direction
$d \in \br^n$ satisfying $\nabla f(x)^\top d 
\leq 0$,
defining
\begin{equation*}
\rho = -\nabla f(x)^\top d,\quad \delta = \nabla^2 f(x)[d,d],\quad  \Delta = \nabla^2 F(x)[d,d],\quad \eta = \tfrac{\rho}{\sqrt{\Delta+\delta}},
\end{equation*}
we have, for all
$t \in [0, \frac{1}{\kappa \sqrt{\delta+\Delta}})$, that $x+td \in \dom(f)$ and
\begin{equation}\label{eq:descent-convex}
f(x+td) \leq f(x) - \rho t + \kappa^{-2}\omega_\star(\kappa t\sqrt{\delta + \Delta}).
\end{equation}
Letting $\bar{t} = \frac{\rho}{\delta+\Delta+\kappa\rho\sqrt{\delta+\Delta}}$, we have $f(x+td) \leq f(x)$ for all $t \in [0, \bar{t}]$ and
\begin{equation}\label{eq:descent-convex-optimal}
f(x+\bar{t}d) \leq f(x) - \rho\bar{t} + \kappa^{-2}\omega_\star(\kappa\bar{t}\sqrt{\delta+\Delta}) = f(x)-\kappa^{-2}\omega(\kappa\eta) \leq f(x) - \tfrac{\eta^2}{2(1+\Gamma_f(x))}.
\end{equation}
Moreover, if $F$ is not only convex , but is also $\kappa_F$-self-concordant, we have
\begin{equation}\label{eq:descent-SC}
f(x+td) \leq f(x) - \rho t + \kappa^{-2}\omega_\star(\kappa t\sqrt{\delta + \Delta}) - \kappa_F^{-2}\omega(\kappa_F t\sqrt{\Delta}).
\end{equation}
\end{proposition}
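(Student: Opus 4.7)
The plan is to lift to the auxiliary function $g := f + F$, which is $\kappa$-self-concordant by Definition~\ref{def:SC-nonconvex-2}, and invoke the classical self-concordance machinery for $g$. The Dikin-ellipsoid containment yields $x + td \in \dom(g) = \dom(f)$ whenever $t \in [0, 1/(\kappa\sqrt{\delta+\Delta}))$, since $\|d\|_{\nabla^2 g(x)}^2 = \delta + \Delta$. Nesterov's upper self-concordance bound (e.g.,~\cite[Theorem~5.1.9]{Nesterov-2018-Lectures}) gives $g(x+td) \leq g(x) + t\nabla g(x)^\top d + \kappa^{-2}\omega_\star(\kappa t\sqrt{\delta+\Delta})$. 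Substituting $g = f + F$ and $\nabla g = \nabla f + \nabla F$, using $-\nabla f(x)^\top d = \rho$, and rearranging produces the first displayed inequality of the proposition. For convex $F$, the first-order underestimate $F(x+td) \geq F(x) + t\nabla F(x)^\top d$ cancels the remaining $F$-terms and yields~\eqref{eq:descent-convex}.

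Next I would optimize the right-hand side $h(t) := -\rho t + \kappa^{-2}\omega_\star(\kappa t\sqrt{\delta+\Delta})$ over $t$. Using $\omega_\star'(z) = z/(1-z)$, the stationarity equation $h'(t) = 0$ reduces to a linear equation in $t$ that yields the claimed closed form $\bar{t} = \rho/(\delta+\Delta+\kappa\rho\sqrt{\delta+\Delta})$; the strict inequality $\bar{t} < 1/(\kappa\sqrt{\delta+\Delta})$ is immediate. Since $\omega_\star$ is convex, so is $h$, and $h(0) = 0$ together with $h'(0) = -\rho \leq 0$ forces $h$ to be non-increasing on $[0, \bar{t}]$, giving the monotonicity claim $f(x+td) \leq f(x)$ on this interval. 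The main algebraic step, which I expect to be the most error-prone, is the clean simplification at the optimum: setting $z := \kappa \bar{t}\sqrt{\delta+\Delta}$ one verifies $z = \kappa\eta/(1+\kappa\eta)$ and $1 - z = 1/(1+\kappa\eta)$, after which the rational parts of $h(\bar{t})$ cancel and the logarithmic piece reassembles into $-\kappa^{-2}\omega(\kappa\eta)$, yielding~\eqref{eq:descent-convex-optimal}.

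The $\Gamma_f$-bound follows by combining~\eqref{eq:descent-convex-optimal} with $\inf f \leq f(x + \bar{t} d)$, which gives $\omega(\kappa\eta) \leq \kappa^2(f(x) - \inf f)$; since $\omega$ is strictly increasing on $\br_+$, the definition~\eqref{def:Gamma_f} forces $\kappa\eta \leq \Gamma_f(x)$. Integrating the monotone estimate $\omega'(s) = s/(1+s) \geq s/(1 + \Gamma_f(x))$ on $[0, \kappa\eta]$ yields $\omega(\kappa\eta) \geq (\kappa\eta)^2/(2(1+\Gamma_f(x)))$, which rearranges to the claim on $\kappa^{-2}\omega(\kappa\eta)$. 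Finally, for~\eqref{eq:descent-SC} I would replace the crude convex underestimate of $F$ used earlier by the sharper self-concordance lower bound $F(x+td) \geq F(x) + t\nabla F(x)^\top d + \kappa_F^{-2}\omega(\kappa_F t\sqrt{\Delta})$ (available because $F$ is $\kappa_F$-self-concordant) and substitute it into the first inequality of the proposition; no ideas beyond those already used are required.
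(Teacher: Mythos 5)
Your proposal is correct and follows essentially the same route as the paper: lift to $g = f+F$, invoke Nesterov's Theorems 5.1.5/5.1.9 for the domain and upper bound, use convexity of $F$ for \eqref{eq:descent-convex}, optimize the model for \eqref{eq:descent-convex-optimal}, derive $\kappa\eta \le \Gamma_f(x)$ from the function-value drop, and sharpen the $F$-underestimate via Theorem 5.1.8 for \eqref{eq:descent-SC}. The only cosmetic difference is that the paper establishes the monotonicity of the model on $[0,\bar t]$ by differentiating it directly, whereas you invoke convexity of $\omega_\star$ together with $h'(0)\le 0$ and $h'(\bar t)=0$; both are equivalent.
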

\begin{proof}{Proof}
It follows from \cite[Theorems~5.1.5 and 5.1.9]{Nesterov-2018-Lectures} that $x+td \in \dom(f)$ and
\begin{equation*}
f(x+td) - f(x) + \rho t \leq F(x) + t\nabla F(x)^\top d - F(x+td) + \kappa^{-2}\omega_\star(\kappa t\sqrt{\delta + \Delta}).
\end{equation*}
Since $F$ is convex, we have $F(x) + t\nabla F(x)^\top d - F(x+td) \leq 0$. Plugging this into the above inequality yields 
inequality \eqref{eq:descent-convex}. We define
\begin{equation*}
g(t) = f(x) - \rho t - \tfrac{1}{\kappa^2}\left(\kappa t\sqrt{\delta + \Delta} + \log(1-\kappa t\sqrt{\delta + \Delta})\right).
\end{equation*}
Taking the derivative of $g(t)$ yields
\begin{equation*}
g'(t) = - \rho + \left(\tfrac{t(\delta + \Delta)}{1-\kappa t\sqrt{\delta + \Delta}}\right).
\end{equation*}
It is easy to verify from the definition of $\bar{t}$ that
 $g'(t) \leq 0$ for all $t \in [0,\bar{t}]$. Since $g(0)=f(x)$, we have $f(x+td) \leq g(t) \leq f(x)$ for all $t \in [0,\bar{t}]$. Plugging $\bar{t}$ into \eqref{eq:descent-convex} yields
\eqref{eq:descent-convex-optimal}. In addition,
\eqref{eq:descent-convex-optimal} implies
\begin{equation*}
\kappa \eta - \log(1 + \kappa \eta) = \omega(\kappa\eta) \leq \kappa^2(f(x) - f(x+\bar{t}d)) \leq \kappa^2\left(f(x)-\inf_{z \in \br^n} f(z)\right).
\end{equation*}
Thus, $\kappa\eta \leq \Gamma_f(x)$. Since $\omega(z) \geq \frac{z^2}{2(1+\Gamma_f(x))}$ for $0 \leq z \leq \Gamma_f(x)$, we have $\kappa^{-2}\omega(\kappa\eta) \geq \frac{\eta^2}{2(1+\Gamma_f(x))}$. It follows from \cite[Theorem~5.1.8]{Nesterov-2018-Lectures} that
\begin{equation*}
F(x) + t\nabla F(x)^\top d - F(x+td) \leq - \kappa_F^{-2}\omega(\kappa_F t\sqrt{\Delta}).
\end{equation*}
which implies
inequality
\eqref{eq:descent-SC}. \Halmos
\end{proof}

\section{Algorithms}
\label{sec:algorithms}
We develop two algorithms, Algorithm~\ref{algorithm:RNM} (RNM) and Algorithm~\ref{algorithm:ARM} (ARM), for
minimizing an unconstrained function $f$ (i.e., solving problem~\eqref{prob:unconstrained}),
where $f$ is $F$-based $\kappa$-self-concordant and bounded below, and present proofs of their convergence. Compared to
other regularized methods (e.g., see ~\cite{Polyak-2009-Regularized, Nesterov-2006-Cubic, Cartis-2011-Adaptive,Cartis-2011-Adaptive2, Gratton-2025-Yet}),
our
algorithms assume neither $\dom(f) = \br^n$ nor
a Lipschitz Hessian condition.
Throughout
the rest of
this paper, we make Assumption~\ref{Assumption:main}
below
and impose additional assumptions (e.g., $F$ is self-concordant or quadratic) when necessary.

\begin{assumption}\label{Assumption:main}
The function $f: \br^n \to \br \cup \{+\infty\}$ is bounded below and is $F$-based $\kappa$-self-concordant with respect to a convex function $F: \br^n \to \br \cup \{+\infty\}$ for some $\kappa > 0$.
\end{assumption}

We also make use of the following definitions in the rest of this section:
\begin{equation*}
\rho_j = -\nabla f(x_j)^\top d_j,\quad \delta_j = \nabla^2 f(x_j)[d_j,d_j],\quad  \Delta_j = \nabla^2 F(x_j)[d_j,d_j],\quad \eta_j = \tfrac{\rho_j}{\sqrt{\Delta_j+\delta_j}}.
\end{equation*}
\subsection{Regularized Newton's Method (RNM)} \label{subsec:RNM}

At each iteration,
Algorithm~\ref{algorithm:RNM} (RNM) computes
a direction $d_j \in \br^n$ that minimizes the right-hand side of
inequality~\eqref{eq:descent-convex-optimal}. This is equivalent to maximizing $\eta_j = \tfrac{-\nabla f(x_j)^\top d_j}{\sqrt{d_j^\top(\nabla^2(f+F)(x_j))d_j}}$ w.r.t. $d_j$ and yields Algorithm~\ref{algorithm:RNM}.

\begin{algorithm}
\caption{Regularized Newton's Method}\label{algorithm:RNM}
\vskip6pt
\begin{algorithmic}[1]
\State \textbf{Initialization:} $x_0 \in \dom(f)$.
\For{$j=0,1,2,\ldots$}
\State $d_j \gets -(\nabla^2(f+F)(x_j))^{-1} \nabla f(x_j)$.
\State $\lambda_j \gets \sqrt{-\nabla f(x_j) ^\top d_j}$.
\State $t_j \gets \tfrac{1}{1+\kappa \lambda_j}$.
\State $x_{j+1} \gets x_j + t_j d_j$.
\EndFor
\end{algorithmic}
\end{algorithm}
Let $A^+$ denote the pseudoinverse of $A\in\br^{n\times n}$ and $\sqrt{x} = \infty$ if $x<0$. These conventions will be convenient in Algorithm~\ref{algorithm:ARM}. To analyze Algorithm~\ref{algorithm:RNM}, we define the square root of regularized Newton decrement as
\begin{equation}
\label{eq:def-nu}
\nu_{f,F}(x_j)=\sqrt{\delta_j+\Delta_j}, \; \; \mathrm{where}\ d_j=-(\nabla^2(f+F)(x_j))^{+}\nabla f(x_j),
\end{equation}
Under Assumption~\ref{Assumption:main}, we derive
convergence guarantees in terms of $\nu_{f,F}(x)$ rather than
the
gradient norm, which is insufficient to measure the stationarity due to the lack of Lipschitz gradients. Our results can be interpreted as a generalization of \cite[Section 5.2]{Nesterov-2018-Lectures} and $\nu_{f,F}(x) = \lambda_f(x)$ when $F=0$ (see \cite[Page 354]{Nesterov-2018-Lectures}). For functions that are weakly self-concordant, this measure relates to gradient norms of Moreau envelopes (see Appendix \ref{subsec:convergence-to-a-proximal-stationary-point}). We summarize our results in the following two theorems. 
\begin{theorem}\label{thm:RNM}
Suppose that Assumption~\ref{Assumption:main} holds. Then, the sequence of iterates $\{x_j\}_{j \geq 0}$ generated by Algorithm~\ref{algorithm:RNM} satisfy
\begin{equation*}
\min_{0 \leq j \leq k} \nu_{f,F}(x_j) \leq \sqrt{\tfrac{2(1+\Gamma_f(x_0))(f(x_0)-\inf_{x \in \br^n} f(x))}{k+1}}.
\end{equation*}
where $\Gamma_f(x)$ is defined in Eq.~\eqref{def:Gamma_f}.
\end{theorem}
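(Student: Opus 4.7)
The plan is to combine the descent inequality \eqref{eq:descent-convex-optimal} from Proposition \ref{prop:descent} with a telescoping/pigeonhole argument. First, I would verify that the RNM direction $d_j = -(\nabla^2(f+F)(x_j))^{-1}\nabla f(x_j)$ is well defined (Assumption \ref{Assumption:main} makes $\nabla^2(f+F)(x_j)$ positive definite) and that it satisfies $\nabla f(x_j)^\top d_j = -(\nu_{f,F}(x_j))^2 \leq 0$, so Proposition \ref{prop:descent} applies. Plugging $d_j$ into the quantities defined there gives $\rho = \delta+\Delta = (\nu_{f,F}(x_j))^2$ and $\eta = \nu_{f,F}(x_j)$, and the minimizing step size $\bar{t}_j = \rho/(\delta+\Delta+\kappa\rho\sqrt{\delta+\Delta})$ simplifies to $1/(1+\kappa\,\nu_{f,F}(x_j))$, matching the step length in \eqref{algorithm:RNM}. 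Equation \eqref{eq:descent-convex-optimal} then yields
\[
f(x_{j+1}) \leq f(x_j) - \kappa^{-2}\omega\!\bigl(\kappa\,\nu_{f,F}(x_j)\bigr).
\]

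Next, I would convert this into a quadratic lower bound on the per-iteration decrease. Because $\omega$ is nonnegative the sequence $\{f(x_j)\}$ is monotonically nonincreasing, so $f(x_j) \leq f(x_0)$ for every $j$. Since $\Gamma_f$ depends on $x$ only through the optimality gap $f(x)-\inf f$ and $t \mapsto t-\log(1+t)$ is increasing on $\br_+$, monotone descent gives $\Gamma_f(x_j) \leq \Gamma_f(x_0)$. Reusing the argument at the end of the proof of Proposition \ref{prop:descent}, namely $\kappa\,\nu_{f,F}(x_j) \leq \Gamma_f(x_j) \leq \Gamma_f(x_0)$ together with $\omega(z) \geq z^2/(2(1+\Gamma_f(x_0)))$ for $0 \leq z \leq \Gamma_f(x_0)$, I would conclude
\[
f(x_{j+1}) \leq f(x_j) - \frac{(\nu_{f,F}(x_j))^2}{2(1+\Gamma_f(x_0))}.
\]

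Finally, I would telescope this inequality over $j = 0,\dots,k$ and bound $f(x_{k+1}) \geq \inf_{x \in \br^n} f(x)$ to obtain
\[
\sum_{j=0}^{k} (\nu_{f,F}(x_j))^2 \leq 2(1+\Gamma_f(x_0))\Bigl(f(x_0)-\inf_{x \in \br^n} f(x)\Bigr),
\]
and then $(k+1)\min_{0 \leq j \leq k}(\nu_{f,F}(x_j))^2 \leq \sum_{j=0}^k (\nu_{f,F}(x_j))^2$ followed by a square root delivers the theorem. The only genuine subtlety, and the step I expect to spell out with care, will be propagating the bound $\kappa\,\nu_{f,F}(x_j) \leq \Gamma_f(x_0)$ uniformly in $j$; this is precisely what the monotone descent of $f(x_j)$ and the monotonicity of $\Gamma_f$ in the optimality gap buy us, making the argument self-contained once \eqref{eq:descent-convex-optimal} is invoked.
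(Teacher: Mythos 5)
Your proposal matches the paper's argument exactly: apply Proposition~\ref{prop:descent} (with $\rho = \delta+\Delta = (\nu_{f,F}(x_j))^2$) to get per-iteration decrease, upgrade it to a quadratic bound via $\omega(z) \geq z^2/(2(1+\Gamma_f(x_0)))$ using monotone descent to control $\Gamma_f(x_j) \leq \Gamma_f(x_0)$, then telescope and apply pigeonhole. If anything, you spell out the $\Gamma_f(x_j) \leq \Gamma_f(x_0)$ propagation more carefully than the paper, which packs it into the phrase ``$f(x_j) \leq f(x_0)$ for all $j \geq 0$''; the substance is the same.
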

\begin{proof}{Proof}
    Proposition~\ref{prop:descent} guarantees that $x_j\in\dom(f)$, $f(x_j) \leq f(x_0)$ for all $j\geq 0$ and $f(x_j)-f(x_{j+1}) \geq \frac{(\nu_{f,F}(x_j))^2}{2(1+\Gamma_f(x_0))}$. Summing this inequality over $j = 0, 1, 2, \ldots$ yields
\begin{equation*}
f(x_0) - \inf_{x \in \br^n} f(x) \geq \sum_{j=0}^k f(x_j)-f(x_{j+1}) \geq \sum_{j=0}^k \tfrac{\nu_{f,F}(x_j)^2}{2(1+\Gamma_f(x_0))}\geq \tfrac{(k+1)(\min_{0\leq j\leq k} \nu_{f,F}(x_j))^2}{2(1+\Gamma_f(x_0))}.
\end{equation*}
This completes the proof. \Halmos
\end{proof}
If the Polyak-\L{}ojasiewicz (PL) condition \cite{Lojasiewicz-1963-Propriete, Polyak-1964-Gradient} holds in
a neighborhood of
a limit point of $\{x_j\}_{j \geq 0}$, we further have
\begin{theorem}
\label{thm:local-convergence-RNM}
Suppose that Assumption~\ref{Assumption:main} holds, $f+F$ is $\alpha$-strongly convex and
$f$ satisfies  the Polyak-\L{}ojasiewicz (PL) condition with constant $\mu > 0$ in a neighborhood $U$ in $\dom(f)$ of an
optimal solution  $x^\star$,
(i.e., $\frac{1}{2}
\|\nabla f(x)\|^2 \geq \mu(f(x) - f(x^\star))$ for all $x
\in U$).
If $x^\star$ is a limit point of $\{x_j\}_{j \geq 0}$,  generated by Algorithm~\ref{algorithm:RNM},
we have $x_j \rightarrow x^\star$ and $f(x_j) - f(x^\star)$ converges $Q$-linearly to $0$.
\end{theorem}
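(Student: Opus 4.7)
The plan is to combine the monotone descent of Proposition~\ref{prop:descent} with the local Polyak-\L{}ojasiewicz (PL) bound to obtain a $Q$-linear decrease of $\Phi_j := f(x_j) - f(x^\star)$, and then to use that decrease together with the strong convexity of $f+F$ to show $\{x_j\}$ is Cauchy with limit $x^\star$.

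\emph{Function-value convergence.} Since Eq.~\eqref{eq:descent-convex-optimal} guarantees that $\{f(x_j)\}$ is non-increasing, since $f$ is continuous at the interior point $x^\star$, and since some subsequence $x_{j_k}\to x^\star$ by hypothesis, monotonicity yields $f(x_j)\downarrow f(x^\star)$. Summing Eq.~\eqref{eq:descent-convex-optimal} then gives $\omega(\kappa\nu_{f,F}(x_j))\to 0$, so $\nu_{f,F}(x_j)\to 0$ and $\Gamma_f(x_j)\to 0$.

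\emph{Local one-step decrease.} I would fix $r>0$ with $\bar B(x^\star,r)\subset U\cap\mathrm{int}(\dom f)$ and, by continuity/compactness, pick $M>0$ such that $\nabla^2(f+F)(x)\preceq MI$ on $\bar B(x^\star,r)$. For $x\in\bar B(x^\star,r)$, the bound gives $\nu_{f,F}(x)^2\geq\|\nabla f(x)\|^2/M$, and combining with PL yields $\nu_{f,F}(x)^2\geq (2\mu/M)\Phi(x)$. Using the elementary inequality $\omega(z)\geq z^2/(2(1+z))$ in Eq.~\eqref{eq:descent-convex-optimal}, then provided $x_j\in\bar B(x^\star,r)$ and $\kappa\nu_{f,F}(x_j)\leq 1$ (which holds for all large $j$ by Step~1), one obtains a $Q$-linear decrease $\Phi_{j+1}\leq (1-q)\Phi_j$ for some $q\in(0,1)$ depending only on $\mu,M,\kappa$. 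The same chain additionally yields $\|\nabla f(x_j)\|^2 \leq C(\Phi_j-\Phi_{j+1})$, which together with $\|x_{j+1}-x_j\|\leq \alpha^{-1}\|\nabla f(x_j)\|$ (from $\alpha$-strong convexity of $f+F$) controls the step length by $\sqrt{\Phi_j}$.

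\emph{Confinement and conclusion.} The main obstacle is that the preceding local bounds require the iterates to lie in $\bar B(x^\star,r)$, while staying in that ball relies on the same bounds; I would resolve this circularity with an induction. Choose $j_0$ along the convergent subsequence so that $\|x_{j_0}-x^\star\|\leq r/2$, $\kappa\nu_{f,F}(x_{j_0})\leq 1$, and $\Phi_{j_0}$ is small enough that the telescoped geometric sum $(\sqrt{C}/\alpha)\sqrt{\Phi_{j_0}}\sum_{j\geq 0}(1-q)^{j/2}\leq r/2$. Then inductively, if $x_{j_0},\ldots,x_K\in\bar B(x^\star,r)$, the local one-step decrease yields $\Phi_j\leq (1-q)^{j-j_0}\Phi_{j_0}$ for $j_0\leq j\leq K$ and therefore $\|x_{K+1}-x_{j_0}\|\leq r/2$, so $x_{K+1}\in\bar B(x^\star,r)$; the condition $\kappa\nu_{f,F}(x_{K+1})\leq 1$ is inherited from $\nu_{f,F}(x_{K+1})^2\leq\|\nabla f(x_{K+1})\|^2/\alpha\leq C\Phi_{K+1}/\alpha$. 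Summability of $\|x_{j+1}-x_j\|$ makes $\{x_j\}$ Cauchy, and subsequential convergence forces the limit to equal $x^\star$; the inequality $\Phi_{j+1}\leq (1-q)\Phi_j$ for $j\geq j_0$ is the stated $Q$-linear convergence.
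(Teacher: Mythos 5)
Your proof is correct, but it takes a noticeably different route from the paper's. The paper establishes the two local inequalities $f(x_j)-f(x_{j+1})\geq c_1\|\nabla f(x_j)\|^2$ and $\|\nabla f(x_j)\|\geq c_2\|x_{j+1}-x_j\|$ on $U$ (exactly the same chain of bounds you use: the descent inequality from Proposition~\ref{prop:descent}, the relation $\nu_{f,F}(x)^2\geq\|\nabla f(x)\|^2/L$ from a local Hessian operator-norm bound, and $\|x_{j+1}-x_j\|\leq\alpha^{-1}\|\nabla f(x_j)\|$ from strong convexity of $f+F$), but then discharges the whole confinement issue with a single citation to Absil, Mahony, and Andrews [Proposition~3.3, Theorem~3.4], which guarantees convergence of descent sequences to an accumulation point where the \L{}ojasiewicz gradient inequality holds. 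You instead carry out the capture argument by hand: picking $j_0$ along the convergent subsequence with $\Phi_{j_0}$ small enough that the geometric sum of step lengths fits inside a ball, then inductively verifying the iterates stay put. Your version is more self-contained and elementary (no external machinery); the paper's is shorter. Two small remarks on your write-up: the assertion $\Gamma_f(x_j)\to 0$ is not correct unless $x^\star$ is a global minimizer, since $\Gamma_f$ is defined via $\inf f$ rather than $f(x^\star)$ --- but you never actually use it. Also, the way you verify $\kappa\nu_{f,F}(x_{K+1})\leq 1$ in the induction leans on a bound $\|\nabla f(x_{K+1})\|^2\leq C\Phi_{K+1}$ that, if taken from your earlier chain, would require a descent step out of $x_{K+1}$ and hence the very condition you are trying to check. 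The cleanest fix is to shrink $r$ at the outset so that $\kappa\nu_{f,F}(x)\leq 1$ on all of $\bar B(x^\star,r)$, which is possible since $\nu_{f,F}$ is continuous with $\nu_{f,F}(x^\star)=0$; alternatively, derive $\|\nabla f(x)\|^2\leq 2M'\Phi(x)$ directly from local Lipschitz continuity of $\nabla f$ near $x^\star$.
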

\begin{proof}{Proof}
In
\cite[Proposition 3.3 and Theorem 3.4]{Absil-2005-Convergence}, 
it is proved that if $f$ satisfies the PL condition in a neighborhood $U$  of $x_\star$ then $x_j \to x_\star$ if there exist $c_1,c_2>0$ such that $f(x_j) - f(x_{j+1}) \geq c_1 \|\nabla f(x_j)\|^2$ and $\|\nabla f(x_j)\| \geq c_2\|x_j - x_{j+1}\|$ for all $x_j,x_{j+1} \in U$. 
Now by Proposition~\ref{prop:descent} and
the facts that $f(x_j) \leq f(x_0)$ and $\|\nabla^2(f+F)(x_j)\|_{\mathrm{op}}\leq L$,
for some $L>0$, for all $x_j \in U$, we have
\begin{equation} \label{eq:PL-descent-lemma}
f(x_j) - f(x_{j+1}) \geq \tfrac{1}{2(1+\Gamma_f(x_0))} \left(\nu_{f,F}(x_j)\right)^2 \geq \tfrac{1}{2(1+\Gamma_f(x_0)) L} \|\nabla f(x_j)\|^2, \textnormal{ for any } x_j \in U.
\end{equation}
We also have $\|\nabla f(x_j)\| \geq \alpha\|x_{j+1}-x_j\|$.
Hence, from the results in \cite{Absil-2005-Convergence} stated above, it follows that, $x_j \to x_\star$.

It remains to prove the linear convergence of function values.
Since $x_j \to x^\star$, there exists $K \in \mathbb{N}$ such that $x_j \in U$ for all $j \geq K$. Applying Eq.~\eqref{eq:PL-descent-lemma} and the
PL inequality yields $f(x_j) - f(x_{j+1}) \geq \frac{\mu}{(1+\Gamma_f(x_0))L}(f(x_j) - f(x^\star))$ for all $j \geq K$. This implies that
\begin{equation*}
f(x_{j+1})-f(x^\star) \leq \left(1 - \tfrac{\mu}{(1+\Gamma_f(x_0))L}\right)(f(x_j)-f(x^\star)),
\end{equation*}
which implies that $f(x_j)-f(x^\star)$ converges $Q$-linearly to $0$. \Halmos
\end{proof}

\subsection{Adaptive Regularization Method (ARM)}
\begin{algorithm}[!t]
\caption{Adaptive Regularization Method (ARM)}\label{algorithm:ARM}
\vskip6pt
\begin{algorithmic}[1]
\State \textbf{Input:} $0 < \sigma_{\min} \leq \sigma_0$, $0 < \eta_1 \leq \eta_2 < 1$, $0 < \gamma_1 < 1 < \gamma_2 < \gamma_3$. 
\textbf{Opt} = 1 \text{or} 2.
\State \textbf{Initialization:} $x_0 \in \dom(f)$.
\For{$j=0,1,2,\ldots$}
\State
\textbf{Opt}=1:
Given $H_j$, compute $d_j$ and $m_j$ by Eq.~\eqref{eq:d-and-m-ARM-1st}, $\rho_j\gets -\nabla f (x_j)^\top d_j$, $\delta_j \gets d_j^\top \nabla^2f(x_j)d_j$, and $\Delta_j \gets d_j^\top \nabla^2F(x_j)d_j$. \textbf{return} $x_j$ \textbf{if} ${\rho_j} \leq \epsilon \sqrt{\delta_j+\sigma_j\Delta_j}$ \textbf{and} $\rho_j \geq 0$ \textbf{and} $\delta_j+\sigma_j\Delta_j\geq 0$.
\State
\textbf{Opt}=2: Compute $v_j$ s.t. $\|v_j\|=1$ and $\nabla^2 f(x_j)v_j = \lambda_{\min}(\nabla^2f(x_j))v_j$, $d_j$ by
\eqref{eq:d-negative-curvature}, $m_j$ by
\eqref{eq:m-negative-curvature}, $\rho_j\gets -\nabla f(x_j)^\top d_j$, $\delta_j \gets d_j^\top \nabla^2f(x_j)d_j$, and $\Delta_j \gets d_j^\top \nabla^2F(x_j)d_j$. \textbf{return} $x_j$ \textbf{if} $\nu_{f,\sigma_jF}(x_j)\leq \epsilon_g$ and $\lambda_{\min}(\nabla^2f(x_j)) \geq -\sigma_j\sqrt{\epsilon_H} v_j^\top\nabla^2F(x_j)v_j$.
\State $t_j\gets\argmin_{t \geq 0} m_j(t)$, $t_j\gets0$ \textbf{if} $t_j=\infty$ \textbf{or} any tie occurs.
\State $r_j \gets \frac{f(x_j)-f(x_j+t_jd_j)}{f(x_j)-m_j(t_j)}$, where $f(x_j+t_jd_j)\gets +\infty$ if $x_j+t_jd_j\notin \dom(f)$ and $\frac00=0$.
\begin{equation*}
x_{j+1} \gets \begin{cases}
x_j+t_jd_j, & \textnormal{if } r_j > \eta_1,\\
x_j, & \textnormal{otherwise}.
\end{cases} \quad
\sigma_{j+1} \in \begin{cases}
[\max(\sigma_{\min},\gamma_1\sigma_j),\sigma_j], & \textnormal{if } r_j \geq \eta_2,\\
[\sigma_j,\gamma_2\sigma_j], & \textnormal{if } r_j \in (\eta_1,\eta_2),\\
[\gamma_2\sigma_j,\gamma_3\sigma_j], & \mathrm{if}\ r_j\leq\eta_1.
\end{cases}
\end{equation*}
\EndFor
\end{algorithmic}
\end{algorithm}
Our second algorithm Algorithm~\ref{algorithm:ARM} (ARM)
is similar in spirit to the methods in \cite{Cartis-2011-Adaptive,Cartis-2011-Adaptive2,Cartis-2022-Evaluation}.
At each iteration, Algorithm~\ref{algorithm:ARM} computes a direction $d_j \in \br^n$ and a step-size $t_j$ by minimizing the model $m_j(t)$. It then computes the ratio $r_j$ to determine if a sufficient decrease is made. If $r_j > \eta_1$, it accepts the update; otherwise, the iterate is not updated. Finally, it adjusts the parameter $\sigma_j$ based on $r_j$.


Algorithm~\ref{algorithm:ARM} has two options (\textbf{Opt} $= 1$ or $2$)
for selecting the search direction $d_j$ at \textbf{every} iteration.
The first option is preconditioned gradient descent, given by
\begin{equation}
\label{eq:d-and-m-ARM-1st}
d_j=-H_j\nabla f(x_j),\quad  m_j:\br_{+}\to \br, m_j(t)=f(x_j)- \rho_j t + \kappa^{-2}\,\omega_\star(\kappa t\sqrt{\delta_j + \sigma_j \Delta_j}),
\end{equation}
The convergence of option 1 of ARM depends on the choice of $H_j$. Our convergence analysis (Theorem~\ref{thm:ARM-1st} and Lemma~\ref{lem:preconditioned-GD}) covers gradient descent ($H_j = I$), quasi-Newton methods, and Newton-type methods. Under the assumptions of Lemmas~\ref{lem:preconditioned-GD}~or~\ref{lem:ARNM}, we are able to show that option 1 of ARM returns
an approximate first-order stationary point. The second option of ARM, which requires an oracle that detects negative curvature, returns
an
approximate second-order stationary point (see Theorem~\ref{thm:ARM-2nd}). In what follows, we provide
additional details and convergence results.

\begin{theorem} \label{thm:ARM-1st}
Suppose that Assumption~\ref{Assumption:main} holds. If there exists $\sigma_{\max}>0$ such that $\sigma_j \leq \sigma_{\max}$ for all $j\geq 0$, then
Algorithm~\ref{algorithm:ARM} (ARM) with
\textbf{Opt} $= 1$
terminates within at most $O(\epsilon^{-2})$ iterations.
\end{theorem}
\begin{proof}{Proof}
Fix an integer $k\ge 1$ and suppose Algorithm~\ref{algorithm:ARM} has not terminated before iteration $k$.
Let $\mathcal{S}_k$ and $\mathcal{U}_k$ denote the sets of \emph{successful} and \emph{unsuccessful} iterations up to $k$ in Algorithm~\ref{algorithm:ARM},
\[
\mathcal{S}_k:=\{0\le j\le k-1:\ r_j> \eta_1\},\qquad
\mathcal{U}_k:=\{0\le j\le k-1:\ r_j\leq\eta_1\}.
\]
For each $j\in \mathcal{S}_k$, we have $x_{j+1}=x_j+t_jd_j$ and therefore
\[
f(x_j)-f(x_{j+1})
= r_j\bigl(f(x_j)-m_j(t_j)\bigr)
\ge \eta_1\bigl(f(x_j)-m_j(t_j)\bigr).
\]
Hence, if there exists a constant $\widetilde\epsilon>0$ such that $f(x_j)-m_j(t_j)\ge \widetilde\epsilon$ for all $j\in\mathcal{S}_k$, then
\[
f(x_0)-\inf_{z\in\br^n}f(z)
\ge \sum_{j\in\mathcal{S}_k}\bigl(f(x_j)-f(x_{j+1})\bigr)
\ge \eta_1\,\lvert \mathcal{S}_k\rvert\,\widetilde\epsilon,
\]
which implies $\lvert \mathcal{S}_k\rvert = O(\widetilde\epsilon^{-1})$.

We now lower bound the model decrease on successful iterations. For
option 1 in Algorithm~\ref{algorithm:ARM}, whenever $j\in\mathcal{S}_k$, a direct calculation from the explicit form of $m_j$
yields the bound
\[
f(x_j)-m_j(t_j)\ \ge\ \kappa^{-2}\omega \left(\tfrac{-\kappa\nabla f(x_j)^\top d_j}{\sqrt{\nabla^2(f+\sigma_jF)(x_j)[d_j,d_j]}}\right).
\]
Since the algorithm has not terminated, the
option 1 termination condition is violated at $x_j$, i.e.,
\[
\tfrac{\rho_j}{\sqrt{\delta_j+\sigma_j\Delta_j}} =
\tfrac{-\nabla f(x_j)^\top d_j}{\sqrt{\nabla^2(f+\sigma_jF)(x_j)[d_j,d_j]}}>\epsilon.
\]
Consequently, for all $j\in\mathcal{S}_k$,
\[
f(x_j)-m_j(t_j)>\kappa^{-2}\omega(\kappa \epsilon),
\]
and hence $\lvert \mathcal{S}_k\rvert = O(\epsilon^{-2})$.

Finally, we bound the number of unsuccessful iterations using the update rule for $\sigma_j$.
For any $j\in \mathcal{S}_k$ we have $\sigma_{j+1}\ge \max(\sigma_{\min},\gamma_1\sigma_j)\ge \gamma_1\sigma_j$, while for any $j\in\mathcal{U}_k$ we have $\sigma_{j+1}\ge \gamma_2\sigma_j$.
Thus,
\[
\sigma_0\,\gamma_1^{\lvert \mathcal{S}_k\rvert}\gamma_2^{\lvert \mathcal{U}_k\rvert}\ \le\ \sigma_k\ \le\ \sigma_{\max}.
\]
Taking logarithms and rearranging yields
\[
\lvert \mathcal{U}_k\rvert \le \tfrac{\log(\sigma_{\max}/\sigma_0)-\lvert \mathcal{S}_k\rvert\log(\gamma_1)}{\log(\gamma_2)},
\]
and therefore $k=\lvert\mathcal{S}_k\rvert+\lvert\mathcal{U}_k\rvert = O(\lvert\mathcal{S}_k\rvert)=O(\epsilon^{-2})$. \Halmos
\end{proof}

In many practical settings the preconditioner satisfies $H_j \succ 0$, e.g., gradient descent with $H_j = I$, quasi-Newton methods \cite{Li-2001-Global}, and inverse Gauss--Newton approximations. Under this additional structure, a spectral bound on the preconditioned Hessian ensures that $\sigma_j$ remains uniformly bounded,
as shown in Lemma~\ref{lem:preconditioned-GD} below,
so the $O(\epsilon^{-2})$ complexity of Theorem~\ref{thm:ARM-1st} applies directly.
\begin{lemma} \label{lem:preconditioned-GD}
Suppose that Assumption~\ref{Assumption:main} holds and $H_j\succ 0$. Then, we have $\sigma_j \leq \max(\sigma_0, \gamma_3)$ for ARM with \textbf{Opt} $=1$.
\end{lemma}
\begin{proof}{Proof}
With $d_j=-H_j\nabla f(x_j)$ and $H_j\succ 0$, we have $\nabla f(x_j)^\top d_j=-\nabla f(x_j)^\top H_j\nabla f(x_j)\le 0$ for all $j$.
Suppose for contradiction that there exists an index $j$ such that $\sigma_{j+1}>\max(\sigma_0,\gamma_3)\ge \sigma_j$.
Then $\sigma_j\ge \sigma_{j+1}/\gamma_3\ge 1$.
Assumption~\ref{Assumption:main} implies $\delta_j+\Delta_j\ge 0$, and hence $\delta_j+\sigma_j\Delta_j\ge 0$.
By Proposition~\ref{prop:descent}, we have $x_j+t_jd_j\in\dom(f)$ and
\[
f(x_j)-f(x_j+t_jd_j)\ \ge\ \rho_j t_j-\kappa^{-2}\omega_\star\!\bigl(\kappa t_j\sqrt{\delta_j+\Delta_j}\bigr).
\]
On the other hand, by the definition of the
option 1 model $m_j$,
\[
f(x_j)-m_j(t_j)=\rho_j t_j-\kappa^{-2}\omega_\star\!\bigl(\kappa t_j\sqrt{\delta_j+\sigma_j\Delta_j}\bigr).
\]
Since $\sigma_j\ge 1$ and $\omega_\star$ is increasing on $\br_+$, we obtain $f(x_j)-f(x_j+t_jd_j)\ge f(x_j)-m_j(t_j)$, so $r_j\ge 1>\eta_2$.
Algorithm~\ref{algorithm:ARM} would therefore enforce $\sigma_{j+1}\le \sigma_j$, contradicting $\sigma_{j+1}>\sigma_j$.
Hence, $\sigma_j\le \max(\sigma_0,\gamma_3)$ for all $j$. \Halmos
\end{proof}

We can further specialize ARM to obtain an adaptive method by leveraging the self-concordant structure of $f + \sigma_j F$ for $\sigma_j\geq 1$. In particular, we define
\begin{equation} \label{eq:d-ARNM}
d_j = -(\nabla^2(f+\sigma_j F)(x_j))^{+} \nabla f(x_j),
\end{equation}
which yields
\begin{equation} \label{eq:m-ARNM}
m_j(t) = f(x_j) - t(\nu_{f,\sigma_j F}(x_j))^2 + \kappa^{-2} \omega_\star(\kappa t \nu_{f,\sigma_j F}(x_j)).
\end{equation}
The termination condition becomes $\nu_{f,\sigma_j F}(x_j) \leq \epsilon$. If $\delta_j + \sigma_j \Delta_j > 0$ and $\rho_j \geq 0$, the optimal step size is
\begin{equation} \label{eq:t-ARA-2nd-nu}
t_j = \tfrac{1}{1 + \kappa \nu_{f,\sigma_j F}(x_j)}.
\end{equation}
\begin{lemma}\label{lem:ARNM}
Suppose that Assumption~\ref{Assumption:main} holds. Let $d_j$ be defined by Eq.~\eqref{eq:d-ARNM}. Then, we have $\sigma_j \leq \max(\sigma_0, 1, \gamma_3)$ for ARM with \textbf{Opt} $=1$.
\end{lemma}
\begin{proof}{Proof}
Suppose for contradiction that there exists $j$ such that
$\sigma_{j+1}>\max(\sigma_0,1,\gamma_3)\ge \sigma_j$.
Since $\sigma_{j+1}>\gamma_3$ and Algorithm~\ref{algorithm:ARM} always enforces
$\sigma_{j+1}\le \gamma_3\sigma_j$, we have \(\sigma_j\ge\frac{\sigma_{j+1}}{\gamma_3}>1\).
Let $\rho_j:=-\nabla f(x_j)^\top d_j\ge 0$, $\delta_j:=\nabla^2 f(x_j)[d_j,d_j]$, and
$\Delta_j:=\nabla^2F(x_j)[d_j,d_j]\ge 0$.
Assumption~\ref{Assumption:main} implies $\delta_j+\Delta_j\ge 0$ and hence
$\delta_j+\sigma_j\Delta_j\ge 0$.
By Proposition~\ref{prop:descent}, we have $x_j+t_jd_j\in\dom(f)$ and
\[
f(x_j)-f(x_j+t_jd_j)
\ \ge\
\rho_j t_j-\kappa^{-2}\omega_\star\!\bigl(\kappa t_j\sqrt{\delta_j+\Delta_j}\bigr).
\]
On the other hand, since $d_j=-(\nabla^2(f+\sigma_jF)(x_j))^{+}\nabla f(x_j)$,
we have
from (\ref{eq:def-nu}) that $\rho_j= \nabla f(x_j)^\top\bigl(\nabla^2(f+\sigma_jF)(x_j)\bigr)^{+}\nabla f(x_j)
= \bigl(\nu_{f,\sigma_jF}(x_j)\bigr)^2 = \delta_j+\sigma_j\Delta_j$, and hence
from \eqref{eq:m-ARNM} that
\[
f(x_j)-m_j(t_j)
= \rho_j t_j-\kappa^{-2}\omega_\star\!\bigl(\kappa t_j\sqrt{\delta_j+\sigma_j\Delta_j}\bigr).
\]
Because $\sigma_j\ge 1$ and $\omega_\star$ is increasing on its domain, we have
\(\omega_\star\!\bigl(\kappa t_j\sqrt{\delta_j+\sigma_j\Delta_j}\bigr)
\ge\omega_\star\!\bigl(\kappa t_j\sqrt{\delta_j+\Delta_j}\bigr)\);
therefore $f(x_j)-f(x_j+t_jd_j)\ge f(x_j)-m_j(t_j)$, i.e.,
$r_j\ge 1>\eta_2$.
By the update rule in Algorithm~\ref{algorithm:ARM}, $r_j\ge \eta_2$ forces
$\sigma_{j+1}\le \sigma_j$, contradicting $\sigma_{j+1}>\sigma_j$. \Halmos
\end{proof}

Suppose that $F$ is $\kappa_F$-self-concordant. Then, we exploit negative curvature to escape from saddle points and further decrease the objective. For each iterate $x_j$, we have access to the smallest eigenvalue of Hessian $\lambda_{\min}(\nabla^2 f(x_j))$ and a corresponding unit eigenvector $v_j$. If the Hessian exhibits strong negative curvature, Algorithm~\ref{algorithm:ARM} switches from a regularized Newton step to a curvature-exploiting step as follows,
\begin{equation} \label{eq:d-negative-curvature}
d_j = \begin{cases}
-(\nabla^2(f+\sigma_jF)(x_j))^{+}\nabla f(x_j),& \textnormal{if }\lambda_{\min}(\nabla^2 f(x_j)) \geq -\lambda_{\mathrm{nc}},\\
v_j, & \textnormal{if } \nabla f(x_j)^\top v_j\leq 0,\ \lambda_{\min}(\nabla^2 f(x_j)) < -\lambda_{\mathrm{nc}},\\
-v_j, & \textnormal{otherwise,}
\end{cases}
\end{equation}
where $\lambda_{\mathrm{nc}}=\sigma_j\sqrt{\epsilon_H}\nabla^2F(x_j)[v_j,v_j]$. This rule ensures that, if the significant negative curvature is detected, a descent direction aligned with it will be used to escape saddle regions.
We define the local model $m_j(t)$ as follows,
\begin{equation} \label{eq:m-negative-curvature}
m_j(t) = \begin{cases}
f(x_j) - t(\nu_{f,\sigma_j F}(x_j))^2 + \kappa^{-2} \omega_\star(\kappa t \nu_{f,\sigma_j F}(x_j)),&\text{if }\lambda_{\min}(\nabla^2 f(x_j))\ge -\lambda_{\mathrm{nc}},\\
f(x_j)-\kappa_F^{-2}\omega(\kappa_F t\sqrt{\sigma_j\Delta_j})+\kappa^{-2}\omega_\star(\kappa t\sqrt{\delta_j+\sigma_j\Delta_j}),&\text{otherwise.}
\end{cases}
\end{equation}
If $\lambda_{\min}(\nabla^2 f(x_j))\geq -\lambda_{\mathrm{nc}}$, we have that $m_j(t)$ reduces to the one used for adaptive regularized Newton's methods (see Eq.~\eqref{eq:t-ARA-2nd-nu}). Thus, we can use Eq.~\eqref{eq:t-ARA-2nd-nu}. If $\lambda_{\min}(\nabla^2 f(x_j))<-\lambda_{\mathrm{nc}}$ and $\delta_j+\sigma_j\Delta_j\ge 0$, Algorithm~\ref{algorithm:ARM} performs a negative curvature step where $\delta_j=\lambda_{\min}(\nabla ^2f(x_j))<0$. Minimizing $m_j(t)$ with respect to $t$ yields
\begin{equation} \label{eq:t-negative-curvature-descent}
t_j =
\begin{cases}
-\tfrac{\delta_j}{\sqrt{\sigma_j\Delta_j}\sqrt{\delta_j+\sigma_j\Delta_j}(\kappa_F\sqrt{\delta_j+\sigma_j\Delta_j}+\kappa\sqrt{\sigma_j\Delta_j})}, & \mathrm{if}\ \lambda_{\min}(\nabla^2 f(x_j))<-\lambda_{\mathrm{nc}}, \\
\tfrac{1}{1 + \kappa \nu_{f,\sigma_j F}(x_j)}, & \mathrm{if}\ \lambda_{\min}(\nabla^2 f(x_j)) \geq -\lambda_{\mathrm{nc}}.
\end{cases}
\end{equation}
\begin{theorem} \label{thm:ARM-2nd}
Suppose that Assumption~\ref{Assumption:main} holds with a $\kappa_F$-self-concordant and strictly convex $F$ and $d_j$ and $m_j$ are defined by Eqs.~\eqref{eq:d-negative-curvature}~and~\eqref{eq:m-negative-curvature} for all $j$. 
Then, at all iterations of ARM with \textbf{Opt} $=2$, we have $\sigma_j \leq \max(\sigma_0,\gamma_3)$ for all $j$, and the option 2 variant of Algorithm~\ref{algorithm:ARM} terminates within at most $O(\epsilon_g^{-2}+\epsilon_H^{-1.5})$ iterations.
\end{theorem}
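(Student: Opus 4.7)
The plan is to follow the standard three-stage argument for adaptive regularization methods: (i) show that once $\sigma_j$ is above a uniform threshold every iteration is very successful, which uniformly bounds $\{\sigma_j\}$; (ii) lower-bound the per-iteration objective decrease by $\epsilon_g^2$ on Newton-branch successful steps and by $\epsilon_H^{3/2}$ on negative-curvature-branch successful steps; (iii) combine with $\inf f > -\infty$ and a standard count of unsuccessful iterations.

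For stage (i), the key sub-claim is that whenever $\sigma_j\ge 1$ the model overestimates the true function value, i.e.\ $f(x_j+td_j)\le m_j(t)$ on the model's domain. View $f$ as $\sigma_j F$-based and note that $\sigma_j F$ is $(\kappa_F/\sqrt{\sigma_j})$-self-concordant. In the Newton branch the claim reduces to Eq.~\eqref{eq:descent-convex-optimal} with $F\leftarrow\sigma_jF$. In the negative-curvature branch, apply Eq.~\eqref{eq:descent-SC} with $F\leftarrow\sigma_jF$ and $\kappa_F\leftarrow\kappa_F/\sqrt{\sigma_j}$; comparing with the model defined in Eq.~\eqref{eq:m-negative-curvature}, the inequality $f(x_j+td_j)\le m_j(t)$ reduces, after using $\rho_j\ge 0$, to $\omega(\sqrt{\sigma_j}\,z)\le \sigma_j\omega(z)$ for $z\ge 0$ and $\sigma_j\ge 1$. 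This in turn follows by differentiating in $\sigma$ and using the standard inequality $\omega(z)\ge z^2/(2(1+z))$. Since $r_j\ge 1\ge\eta_2$ when the model dominates, $\sigma_{j+1}\le\sigma_j$ whenever $\sigma_j\ge 1$, so $\sigma_j$ can only exceed $1$ by a factor of at most $\gamma_3$, giving $\sigma_j\le\max(\sigma_0,\gamma_3)$.

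For stage (ii), consider a successful iteration ($r_j\ge\eta_1$) before termination. If the Newton branch is taken, the rule in Eq.~\eqref{eq:d-negative-curvature} forces the second termination condition to hold, so non-termination implies $\nu_{f,\sigma_jF}(x_j)>\epsilon_g$; evaluating Eq.~\eqref{eq:m-ARNM} at the optimal step yields a model decrease $\kappa^{-2}\omega(\kappa\nu_{f,\sigma_jF}(x_j))\gtrsim\epsilon_g^2$, and multiplying by $\eta_1$ lower-bounds the true decrease by a constant times $\epsilon_g^2$. If the negative-curvature branch is taken, then $\delta_j=\lambda_{\min}(\nabla^2f(x_j))<-\sigma_j\sqrt{\epsilon_H}\,\Delta_j$ while $\delta_j+\sigma_j\Delta_j\ge 0$ (which holds since $\sqrt{\epsilon_H}\le 1$). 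Plugging $t_j$ from Eq.~\eqref{eq:t-negative-curvature-descent} into $m_j$, a direct Taylor expansion of $\omega$ and $\omega_\star$ around zero, controlled by $|\delta_j|/(\sigma_j\Delta_j)>\sqrt{\epsilon_H}$ and the $\sigma_j$ bound from stage (i), gives $f(x_j)-m_j(t_j)=\Omega(\epsilon_H^{3/2})$, and again multiplying by $\eta_1$ lower-bounds the true decrease.

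For stage (iii), split the successful iterations into $S_1$ (Newton branch) and $S_2$ (negative-curvature branch); summing the per-step decreases and using boundedness of $f$ below yields $|S_1|=O(\epsilon_g^{-2})$ and $|S_2|=O(\epsilon_H^{-3/2})$. Each unsuccessful iteration multiplies $\sigma_j$ by at least $\gamma_2>1$, each successful one by at most $1$, and $\sigma_j\in[\sigma_{\min},\max(\sigma_0,\gamma_3)]$, so a standard telescoping of $\log\sigma_j$ bounds the number of unsuccessful iterations by a constant multiple of $|S|+1$, giving the overall $O(\epsilon_g^{-2}+\epsilon_H^{-3/2})$ bound. The main obstacle is the explicit $\epsilon_H^{3/2}$ estimate in the negative-curvature branch: the model contains both an $\omega$ gain and an $\omega_\star$ cost, and extracting a clean cubic scaling requires carefully tracking the ratio between $|\delta_j|$ and $\sigma_j\Delta_j$ so that the cost does not absorb the gain. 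A secondary subtlety is that, strictly speaking, viewing $f$ as $\sigma_jF$-based changes the self-concordance constant via Proposition~\ref{prop:SC-closure}; the inequality $\omega(\sqrt{\sigma_j}\,z)\le \sigma_j\omega(z)$ is precisely what makes the model-domination argument uniform in $\sigma_j\ge 1$.
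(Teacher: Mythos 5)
Your three-stage skeleton (bound $\sigma_j$, lower-bound the per-step model decrease, count iterations) matches the paper's structure, and stage (iii) is exactly the paper's Lemma~\ref{lem:ARM-general}. However, stage (i) for the negative-curvature branch has a genuine gap. You propose to apply Eq.~\eqref{eq:descent-SC} with $F\leftarrow\sigma_jF$, $\kappa_F\leftarrow\kappa_F/\sqrt{\sigma_j}$, and reduce model domination to $\omega(\sqrt{\sigma_j}\,z)\le\sigma_j\omega(z)$. That inequality is correct and does handle the $\omega$ term, but matching the $\omega_\star$ term in $m_j$ implicitly requires $f+\sigma_jF$ to be $\kappa$-self-concordant. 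Writing $f+\sigma_jF=(f+F)+(\sigma_j-1)F$ and applying Proposition~\ref{prop:SC-closure} gives constant $\max\bigl(\kappa,\kappa_F/\sqrt{\sigma_j-1}\bigr)$, which blows up as $\sigma_j\downarrow 1$; with the larger constant $\kappa'>\kappa$, the descent bound's $\omega_\star$ term is $\kappa'^{-2}\omega_\star(\kappa'\cdot)\ge\kappa^{-2}\omega_\star(\kappa\cdot)$, the wrong direction, and the $\omega$ inequality does not repair it. The paper avoids this by invoking Proposition~\ref{prop:descent} with the \emph{original} $F$ at the already-chosen $t_j$, yielding $f(x_j)-f(x_j+t_jd_j)\ge-\kappa^{-2}\omega_\star(\kappa t_j\sqrt{\delta_j+\Delta_j})+\kappa_F^{-2}\omega(\kappa_Ft_j\sqrt{\Delta_j})$, and then showing the single function $g(\sigma)=-\kappa^{-2}\omega_\star(\kappa t_j\sqrt{\delta_j+\sigma\Delta_j})+\kappa_F^{-2}\omega(\kappa_Ft_j\sqrt{\sigma\Delta_j})$ is nonincreasing, since $g'(\sigma)=\tfrac{t_j^2\Delta_j}{2}\bigl(\tfrac{-1}{1-\kappa t_j\sqrt{\delta_j+\sigma\Delta_j}}+\tfrac{1}{1+\kappa_Ft_j\sqrt{\sigma\Delta_j}}\bigr)\le0$. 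This handles both terms at once and is uniform in $\sigma_j\ge1$ without touching the self-concordance constant of $f+\sigma_jF$.

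The second issue you flag is where the real content is, and you leave it as a sketch. The paper's resolution of the $\Omega(\epsilon_H^{3/2})$ estimate is fully explicit: set $\gamma=\kappa_F/\kappa$ and $z=-\delta_j/(\sigma_j\Delta_j)>\sqrt{\epsilon_H}$, substitute $t_j$ from Eq.~\eqref{eq:t-negative-curvature-descent} to get $f(x_j)-m_j(t_j)\ge\kappa^{-2}g_\gamma(z)$ for an explicit scalar function $g_\gamma$, and verify $g_\gamma(0)=0$ and $g_\gamma'(z)\ge\tfrac{z^2}{2(\gamma+1)^2}$, whence $g_\gamma(z)\ge\tfrac{z^3}{6(\gamma+1)^2}$ and $f(x_j)-m_j(t_j)\ge\tfrac{\epsilon_H^{3/2}}{6(\kappa+\kappa_F)^2}$. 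Note in particular that the $\sigma_j$-bound from stage (i) plays no role in this cubic estimate (the normalization $z=-\delta_j/(\sigma_j\Delta_j)$ already absorbs $\sigma_j$); it is used only through Lemma~\ref{lem:ARM-general} to control the number of unsuccessful iterations.
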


\begin{proof}{Proof}
We prove that $\sigma_j$ is uniformly upper bounded by contradiction.
Assume there exists an index $j$ such that $\sigma_{j+1}>\max(\sigma_0,\gamma_3)\ge \sigma_j$.
By the update rule for $\sigma$ in Algorithm~\ref{algorithm:ARM}, we always have $\sigma_{j+1}\le \gamma_3\sigma_j$; hence
$\sigma_j\ge \sigma_{j+1}/\gamma_3>1$.
Assumption~\ref{Assumption:main} implies $\delta_j+\Delta_j\ge 0$ and hence $\delta_j+\sigma_j\Delta_j\ge 0$.

We consider two cases. \emph{Case 1: $\lambda_{\min}(\nabla^2 f(x_j))\ge -\lambda_{\mathrm{nc}}$.}
Then $d_j$ is the regularized Newton direction and $m_j$ reduces to the Newton-type model.
By Proposition~\ref{prop:descent}, we have $x_j+t_jd_j\in\dom(f)$ and
\[
f(x_j)-f(x_j+t_jd_j)\ \ge\ \rho_j t_j-\kappa^{-2}\omega_\star\!\bigl(\kappa t_j\sqrt{\delta_j+\Delta_j}\bigr).
\]
Comparing with the definition of $m_j$ in this case and using $\sigma_j\ge 1$ together with the monotonicity of $\omega_\star$ yields
$f(x_j)-f(x_j+t_jd_j)\ge f(x_j)-m_j(t_j)$, i.e., $r_j\ge 1>\eta_2$.
Algorithm~\ref{algorithm:ARM} would therefore enforce $\sigma_{j+1}\le \sigma_j$, contradicting $\sigma_{j+1}>\sigma_j$. \emph{Case 2: $\lambda_{\min}(\nabla^2 f(x_j))<-\lambda_{\mathrm{nc}}$.}
Then $d_j=\pm v_j$ and $\delta_j=\lambda_{\min}(\nabla^2 f(x_j))<0$.
In this case,
\[
f(x_j)-m_j(t_j)
= -\kappa^{-2}\omega_\star\!\bigl(\kappa t_j\sqrt{\delta_j+\sigma_j\Delta_j}\bigr)
+\kappa_F^{-2}\omega\!\bigl(\kappa_F t_j\sqrt{\sigma_j\Delta_j}\bigr).
\]
By Proposition~\ref{prop:descent}, we also have
\[
f(x_j)-f(x_j+t_jd_j)
\ge -\kappa^{-2}\omega_\star\!\bigl(\kappa t_j\sqrt{\delta_j+\Delta_j}\bigr)
+\kappa_F^{-2}\omega\!\bigl(\kappa_F t_j\sqrt{\Delta_j}\bigr).
\]
We define \(g(\sigma):=-\kappa^{-2}\omega_\star\!\bigl(\kappa t_j\sqrt{\delta_j+\sigma\Delta_j}\bigr)
+\kappa_F^{-2}\omega\!\bigl(\kappa_F t_j\sqrt{\sigma\Delta_j}\bigr)\). A direct differentiation shows
\[
g'(\sigma)=\tfrac{t_j^2\Delta_j}{2}\Bigl(-\tfrac{1}{1-\kappa t_j\sqrt{\delta_j+\sigma\Delta_j}}+\tfrac{1}{1+\kappa_F t_j\sqrt{\sigma\Delta_j}}\Bigr)\le 0,
\]
and
hence, $g(\sigma_j)\le g(1)$.
Therefore $f(x_j)-m_j(t_j)=g(\sigma_j)\le g(1)\le f(x_j)-f(x_j+t_jd_j)$, which implies $r_j\ge 1>\eta_2$ as in Case 1. Therefore,
We conclude that $\sigma_j\le \max(\sigma_0,\gamma_3)$ for all $j\ge 0$.

The argument below follows the usual successful/unsuccessful iteration counting used in adaptive regularization \cite{Cartis-2011-Adaptive, Cartis-2011-Adaptive2} and trust-region analyses \cite{Wright-2006-Numerical}; we include it in full for completeness.
Fix an integer $k\ge 1$ and suppose Algorithm~\ref{algorithm:ARM} has not terminated before iteration $k$.
Define the sets of accepted (successful) and rejected (unsuccessful) iterations up to $k$ by
\[
\mathcal{S}_k:=\{0\le j\le k-1:\ r_j> \eta_1\},
\qquad {\rm and} \qquad
\mathcal{U}_k:=\{0\le j\le k-1:\ r_j\leq \eta_1\}.
\]

For any $j\in\mathcal{S}_k$, the step is accepted, so $x_{j+1}=x_j+t_jd_j$ and
\[
f(x_j)-f(x_{j+1})
= f(x_j)-f(x_j+t_jd_j)
= r_j\bigl(f(x_j)-m_j(t_j)\bigr)
\ge \eta_1\bigl(f(x_j)-m_j(t_j)\bigr).
\]
For any $j\in\mathcal{U}_k$, the step is rejected, so $x_{j+1}=x_j$ and $f(x_j)-f(x_{j+1})=0$.
Let $f_{\inf}:=\inf_{x\in\br^n} f(x)$ (finite under Assumption~\ref{Assumption:main}).
Summing over $j=0,\dots,k-1$ yields
\begin{equation}
\label{eq:telescoping-ARM-2nd}
f(x_0)-f_{\inf}
\ge \textstyle\sum_{j=0}^{k-1}f(x_j)-f(x_{j+1})
= \textstyle\sum_{j\in\mathcal{S}_k}f(x_j)-f(x_{j+1})
\ge \eta_1\textstyle\sum_{j\in\mathcal{S}_k}f(x_j)-m_j(t_j).
\end{equation}

We now lower bound $f(x_j)-m_j(t_j)$ for $j\in\mathcal{S}_k$.
Since the algorithm has not terminated at $x_j$, either
(i) $\lambda_{\min}(\nabla^2 f(x_j))\ge -\lambda_{\mathrm{nc}}$ and $\nu_{f,\sigma_jF}(x_j)>\epsilon_g$, or (ii) $\lambda_{\min}(\nabla^2 f(x_j))<-\lambda_{\mathrm{nc}}$.
In case (i), using Eq.~\eqref{eq:t-ARA-2nd-nu}, we obtain
\(f(x_j)-m_j(t_j)\ \ge\ \kappa^{-2}\omega(\kappa\nu_{f,\sigma_j F}(x_j))\ \geq\ \kappa^{-2}\omega(\kappa\epsilon_g)\).

In case (ii), set $z:=-\delta_j/(\sigma_j\Delta_j)$.
Since $\delta_j=\lambda_{\min}(\nabla^2 f(x_j))<-\lambda_{\mathrm{nc}}=-\sigma_j\sqrt{\epsilon_H}\Delta_j$, we have $z>\sqrt{\epsilon_H}$ and, for the step size $t_j$ in Eq.~\eqref{eq:t-negative-curvature-descent},
the explicit form of $m_j$ yields \(f(x_j)-m_j(t_j)\ge\tfrac{z^3}{6(\kappa+\kappa_F)^2}
\ \ge\ \tfrac{\epsilon_H^{1.5}}{6(\kappa+\kappa_F)^2}\).
Consequently, for all $j\in\mathcal{S}_k$,
\(f(x_j)-m_j(t_j)\ \ge\ \widetilde\epsilon
:=\min\!\left(\kappa^{-2}\omega(\kappa\epsilon_g),\ \tfrac{\epsilon_H^{1.5}}{6(\kappa+\kappa_F)^2}\right)\).
Plugging this into Eq.~\eqref{eq:telescoping-ARM-2nd} gives $f(x_0)-f_{\inf}\ \ge\ \eta_1\,\lvert \mathcal{S}_k\rvert\,\widetilde\epsilon$, and hence $\lvert \mathcal{S}_k\rvert\ \le\ \frac{f(x_0)-f_{\inf}}{\eta_1\,\widetilde\epsilon}=O(\widetilde\epsilon^{-1})$.

It remains to bound the number of rejected iterations $\lvert\mathcal{U}_k\rvert$.
For any $j\in \mathcal{S}_k$ we have $\sigma_{j+1}\ge \max(\sigma_{\min},\gamma_1\sigma_j)\ge \gamma_1\sigma_j$, while for any $j\in\mathcal{U}_k$ we have $\sigma_{j+1}\ge \gamma_2\sigma_j$.
Thus, letting $\sigma_{\max} = \max(\sigma_0,\gamma_3)$, we have $\sigma_0\,\gamma_1^{\lvert \mathcal{S}_k\rvert}\gamma_2^{\lvert \mathcal{U}_k\rvert}\ \le\ \sigma_k\ \le\ \sigma_{\max}$.
Taking logarithms and rearranging yields
\(\lvert \mathcal{U}_k\rvert \le \tfrac{\log(\sigma_{\max}/\sigma_0)-\lvert \mathcal{S}_k\rvert\log(\gamma_1)}{\log(\gamma_2)}\), and therefore $k=\lvert\mathcal{S}_k\rvert+\lvert\mathcal{U}_k\rvert = O(\lvert\mathcal{S}_k\rvert)=O(\widetilde\epsilon^{-1})$. Finally, since
\[
\widetilde\epsilon^{-1}
\le \kappa^{2}(\omega(\kappa\epsilon_g))^{-1} + \tfrac{6(\kappa+\kappa_F)^2}{\epsilon_H^{1.5}}
=O(\epsilon_g^{-2}+\epsilon_H^{-1.5}),
\]
the claimed iteration bound follows. \Halmos
\end{proof}

\begin{remark}
Suppose that Algorithm~2 generates a nonzero trial step, i.e., $t_j d_j \neq 0$. Then
\(\delta_j+\sigma_j\Delta_j>0\). Indeed, by the convention $\sqrt{x}=+\infty$ for $x<0$ and the rule in Line~6 of
Algorithm~2 that sets $t_j=0$ whenever the minimizer of $m_j$ is attained at $+\infty$
or $m_j$ is constant, a nonzero trial step can occur only when the model $m_j$ admits a
finite minimizer at some $t_j>0$, which excludes the cases
$\delta_j+\sigma_j\Delta_j\le 0$. Moreover, if Option~1 is used, or if Option~2 is used with $\lambda_{\min}(\nabla^2 f(x_j))\ge -\lambda_{\mathrm{nc}}$, then $\rho_j>0$. In the negative curvature branch of Option~2, i.e., Line 2 of Eq.~\eqref{eq:m-negative-curvature}, we only require $\rho_j\geq 0$, which is achieved by Eq.~\eqref{eq:d-negative-curvature}.
\end{remark}

\begin{remark}
ARM improves on
RNM in two
ways:
First,
ARM introduces a parameter $\sigma_j$ that controls the magnitude of regularization, mitigating the issue of excessively large regularization. Second,
ARM is effective when $f$ is $\ell F$-based $\kappa$-self-concordant, but we have no prior knowledge of $\ell$.
\end{remark}
\begin{remark}
The key advantage of
ARM over the trust region (TR) method and \cite[Algorithm~2.4.1]{Cartis-2022-Evaluation} is
that ARM computes
a better direction $d_j$ and
constructs a better model $m_j(\cdot)$ by leveraging the special self-concordance structure. Indeed, the TR method computes $d_j$ based on a local model $m_j(d) = f(x_j) + \nabla f(x_j)^\top d + \frac{1}{2}d^\top B_jd$, where $B_j$ is
a symmetric matrix
chosen
so that $f(x_j+d_j) < f(x_j)$.
The TR method either accepts or rejects the update $d_j$ based on the
reduction ratio
$r_j = \frac{f(x_j)-f(x_j + d_j)}{m_j(0)-m_j(d_j)}$, and always scales the TR radius based on $r_j$. In \cite[Algorithm~2.4.1]{Cartis-2022-Evaluation}, a local model $m_j(d) = f(x_j) + \nabla f(x_j)^\top d + \frac{1}{2}\sigma_j \|d\|^2$ is constructed and $d_j = \argmin_d m_j(d)$. All other steps are similar to
those
in
ARM. Another advantage of
ARM is its ability to handle the constrained case where $\dom(f) \neq \br^n$. It rejects
taking a step
when $x_j + d_j \notin \dom(f)$, while maintaining convergence guarantees.
\end{remark}

\section{Experiments} \label{sec:exp}

In this section we report the results of two different types of computational experiments. In the first set of tests we compared the performance of Algorithm~\ref{algorithm:ARM} (ARM) with
option 1 for solving nonnegative matrix factorization problems against that of four other algorithms.
In the second set of tests, we modified the well-known approximate natural gradient/Gauss-Newton method, KFAC~\cite{Martens-2015-Optimizing} so that it was transformed into an ``approximate Gauss-Newton'' version of Algorithm~\ref{algorithm:RNM} (RNM). We then compared its performance to that of the original KFAC algorithm in training four different convolutional-neural network/data-set combinations, to illustrate the use of our approach to nonconvex self-concordance to methods beyond those analyzed in this paper.

In our experiments, we found that RNM performs well for training convolutional neural networks (CNNs) but is less effective for nonnegative matrix factorization (NMF),
which can be explained by the different magnitudes of $F$
in these two cases. In our CNN experiments, $F$ is a quadratic with
coefficients on the order of $10^{-2}$, which is consistent with the asymmetric Hessian spectra reported in NN
training~\cite{Zhang-2024-Transformers}, where negative curvature is mild. In these settings, a small regularizer is sufficient to stabilize the updates and capture local curvature. In contrast, the objective function $f$ and its Hessian entries are much larger in scale for NMF. Near a local minimizer, applying the same globally chosen $F$ introduces excessive regularization, which distorts the update direction and ultimately degrades performance.
\paragraph{Nonnegative matrix factorization (NMF)}
Let
$\UCal[a,b]$
denote
the uniform distribution on $[a,b]$. For the KL divergence formulation, we generate the data matrix $Z = \hat{X}\hat{Y} + 0.01\hat{Z}$, where $\hat{X} \in \br^{m \times r}$, $\hat{Y} \in \br^{r \times n}$, $\hat{Z} \in \br^{m \times n}$ and their entries are
sampled i.i.d. from $\UCal[0,1]$. For the MSE loss, we generate $M = \hat{X}\hat{Y}$, where $\hat{X} \in \br^{m \times r}$, $\hat{Y} \in \br^{r \times n}$ and their entries are sampled i.i.d. from $\UCal[0,1]$. We compute the singular value decomposition of $M = U\diag(\sigma_1, \dots, \sigma_r, 0, \dots, 0)V^\top$ and construct the data matrix $Z = U\mathrm{diag}(\sigma_1, \dots, \sigma_r, \sigma_{r+1}, \dots, \sigma_{\min(m,n)})V^\top$, where $\sigma_{r+1}, \dots, \sigma_{\min(m,n)}$ are sampled i.i.d. from $\mathcal{U}[0, 0.1\sigma_r]$. The optimal function value is given by $\frac{1}{2mn}\|Z - \hat{X}\hat{Y}\|_F^2$. Thus, the optimality gap is defined as the difference between the current function value and the optimal function value. Throughout, we set $m=100$, $n=20$, and $r=10$. We consider the case where the initialization of $X$ and $Y$ are independent, with each entry drawn i.i.d. from $\UCal[0,1]$.

We test the performance of Algorithm~\ref{algorithm:ARM} on NMF with both MSE loss and KL divergence. For the MSE loss (Eq.~\eqref{prob:NMF_1}), we set the base function to $F_{\text{MSE}}(X,Y)=(\|X\|_F^2+\|Y\|_F^2+1)^2 - \sum_{i, k} \log(X_{ik}) -\sum_{k, j} \log(Y_{kj})$, while for the KL divergence (Eq.~\eqref{prob:NMF_2}), we set it to $F_{\text{KL}}(X,Y)= - \frac{1}{mn} (\sum_{i, k} \log(X_{ik}) + \sum_{k, j} \log(Y_{kj}))$. By Section~\ref{sec:prelims}, $f_{\text{MSE}}$ in Eq.~\eqref{prob:NMF_1} is $\ell_1 F_{\text{MSE}}$-based self-concordant for some $\ell_1>0$, and $f_{\text{KL}}$ in Eq.~\eqref{prob:NMF_2} is $\ell_2 F_{\text{KL}}$-based self-concordant for some $\ell_2>0$. We implement Algorithm~\ref{algorithm:ARM} with Option~1 and the direction $d_j = -(\nabla^2(f+\sigma_j F)(x_j))^{+} \nabla f(x_j)$; see also Lemma~\ref{lem:ARNM}, Eq.~\eqref{eq:d-ARNM}, and Eq.~\eqref{eq:m-ARNM}.\\
We compare the performance of Algorithm~\ref{algorithm:ARM} against \texttt{IPOPT} (from the python package cyipopt) \cite{Wachter-2006-Implementation}, \texttt{L-BFGS-B}, and \texttt{trust-constr} methods from \texttt{scipy}~\cite{Virtanen-2020-Scipy}. We also implement \texttt{COCARC}~\cite{Cartis-2012-Adaptive}, which applies the framework of adaptive cubic regularization \cite{Cartis-2011-Adaptive, Cartis-2011-Adaptive2} to non-convex optimization with convex constraints, to illustrate the advantages of our method over cubic regularization methods.
In both Algorithm~\ref{algorithm:ARM} and \texttt{COCARC}, we
set
$\sigma_0=1$, $\eta_1=0.01$, $\eta_2=0.9$, $\gamma_1=0.5$, and $\gamma_2=\gamma_3=2$, following \cite[Section~2.4]{Cartis-2022-Evaluation}. In Algorithm~\ref{algorithm:ARM}, we set $\kappa=1$.

As shown in Figure~\ref{fig:main}, Algorithm~\ref{algorithm:ARM} (ARM) consistently outperforms the other methods
to which it is compared on both the MSE (Frobenius norm) (Eq.~\eqref{prob:NMF_1}) and the KL divergence (Eq.~\eqref{prob:NMF_2}) formulations of NMF.
Regarding
the theoretical convergence guarantees of the compared methods in these NMF settings and more generally, we note the following. (i) Since the NMF objective has unbounded sublevel sets and unbounded second- and third-order derivatives, \texttt{COCARC}~\cite{Cartis-2012-Adaptive} is not guaranteed to converge;
and its convergence analysis relies on the Lipschitz constant of the Hessian, which is generally difficult to estimate in practice and can lead to conservative step-size choices. (ii) The convergence of \texttt{L-BFGS-B} on non-convex problems is more subtle: while BFGS enjoys global convergence and superlinear local convergence for convex objectives~\cite{Wright-2006-Numerical}, it is not guaranteed to converge to a stationary point in the nonconvex setting, and counterexamples demonstrating cycling behavior are known~\cite{Mascarenhas-2004-bfgs, Dai-2002-Convergence}; positive results exist, to the best of our knowledge, only under modified update rules such as the cautious BFGS strategy of~\cite{Li-2001-Global}. (iii) For NMF, ARM requires no knowledge of any Lipschitz constant and adaptively tunes the regularization parameter $\sigma_j$, while still enjoying the convergence guarantees established in Theorem~\ref{thm:ARM-1st}.

\begin{figure}[!t]
\centering
\begin{minipage}{0.48\textwidth}
\includegraphics[width=\linewidth]{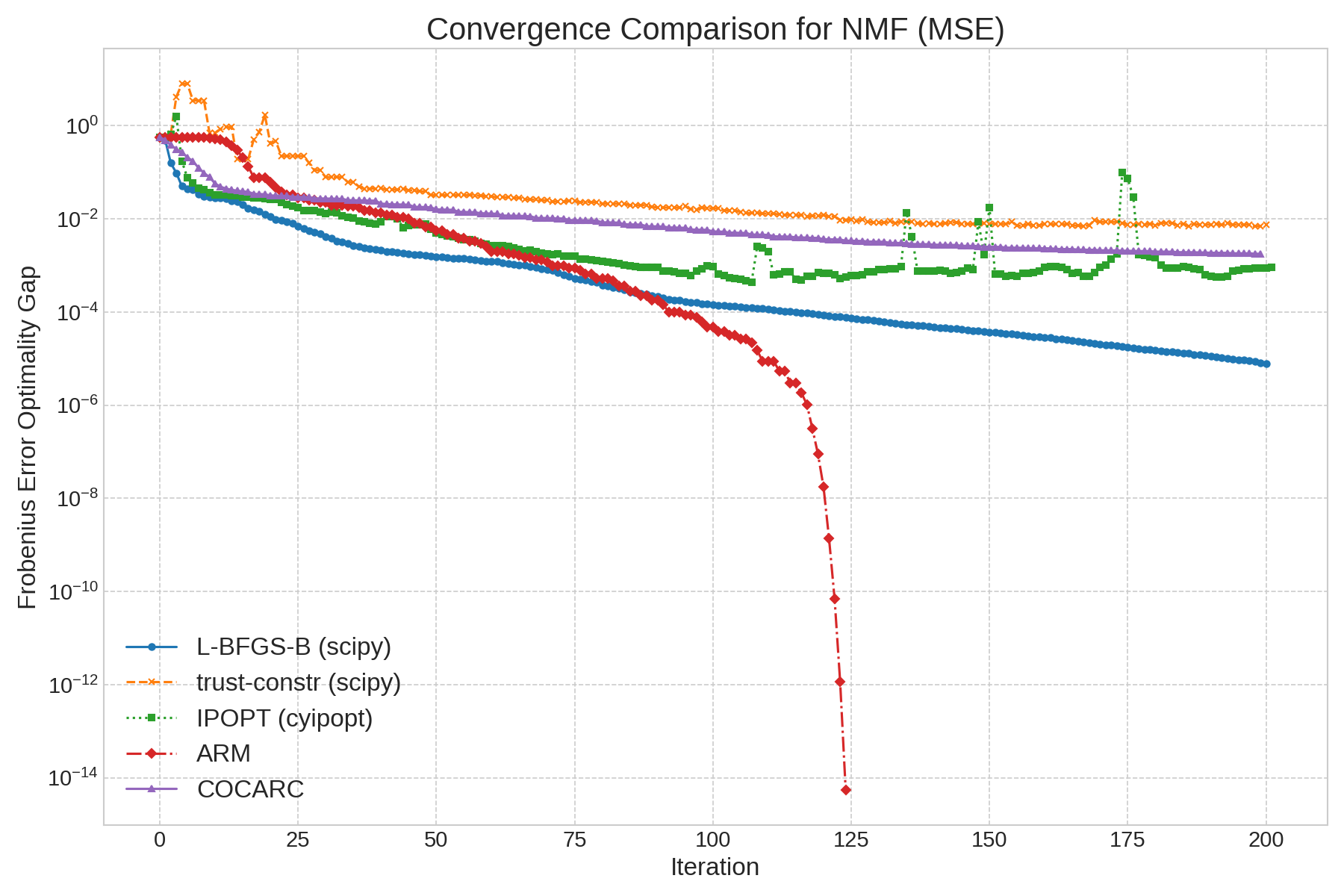}
\end{minipage}
\hfill
\begin{minipage}{0.48\textwidth}
\includegraphics[width=\linewidth]{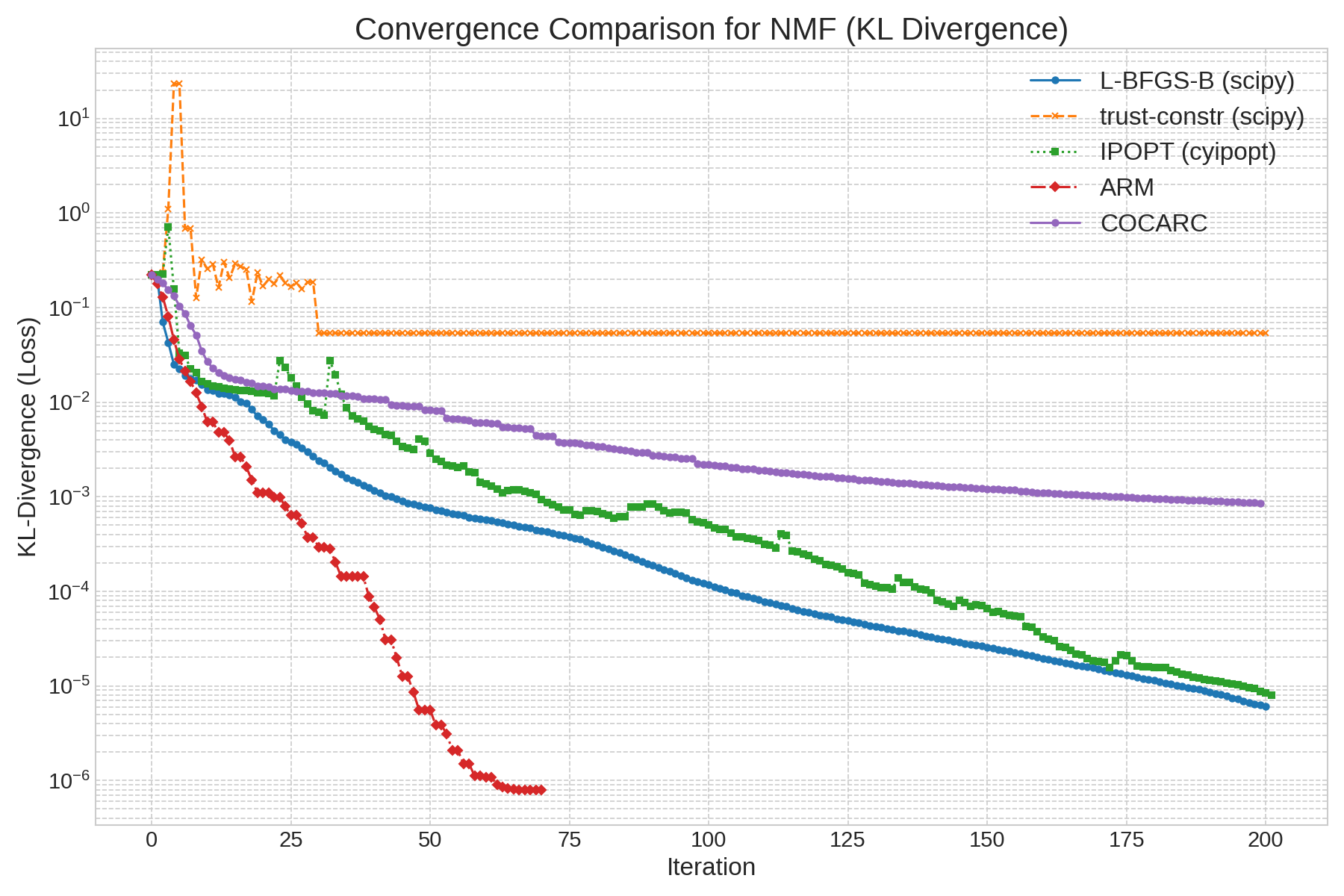}
\end{minipage}
\caption{Performance comparison on NMF with MSE loss (left) and KL divergence (right).} \label{fig:main}  \vspace*{-1.5em}
\end{figure}

\paragraph{Training convolutional neural networks (CNNs)} The use of Newton's methods in large-scale 
NN
training is prohibitively expensive, as computing and inverting the Hessian is costly. To overcome this limitation, the Kronecker-Factored Approximate Curvature (KFAC) method was introduced in~\cite{Martens-2015-Optimizing} as a scalable alternative to Newton's method and natural gradient descent.
Specifically, KFAC is an approximate natural gradient method that employs a Kronecker-factorized version of a positive
semi-definite Fisher Information Matrix (FIM), which is equivalent to the generalized Gauss-Newton (GGN) matrix in certain cases \cite{Martens-2012-Training, Pascanu-2014-Revisiting, Martens-2020-New, Abreu-2025-Potential}, to approximate the Hessian matrix in a regularized Newton method. It avoids the extra work needed
to compute the exact Hessian and
further greatly reduces the memory needed by approximating the FIM by
a block-diagonal matrix based on the layer-wise structure of
NNs.
We consider a supervised learning problem with data distribution $(x, y) \sim \DCal$, where $x$ denotes the input and $y$ the target. A feed-forward
NN
defines a parametric function $f_\theta : \br^{d_x} \to \br^{d_y}$, with $\theta$ consisting of weight matrices and biases across all layers. In particular,
in
the $l$-th layer,
$s_l = W_l a_l + b_l$, $h_l = \phi(s_l)$, where $a_l \in \br^{d_{\textnormal{in}}}$ 
is the vector of inputs,
$W_l \in \br^{d_{\textnormal{out}} \times d_{\textnormal{in}}}$ and $b_l \in \br^{d_{\textnormal{out}}}$ are the parameters,
and
$s_l \in \br^{d_{\textnormal{out}}}$
and $h_l \in \br^{d_{\textnormal{out}}}$
are, respectively, the vectors of pre-activations
and outputs (after element-wise activation $\phi$).
\begin{algorithm} \small
\caption{KFAC \textbf{(i)} \cite{Martens-2015-Optimizing, Zhang-2019-Three} and KFAC with weak self-concordance (KFAC-WSC) \textbf{(ii)}.}\label{algo:KFAC-WSC}
\vskip6pt
\begin{algorithmic}[1] \small
\State \textbf{Input:} $\eta_k$: learning rate; $\omega$: weight decay; $\gamma$: damping; $\mu$: momentum, default 0.9.
\State \textbf{Input:} If choosing \textbf{(ii)}, input $\kappa$: weak self-concordance parameter, default 1; input $\beta$: layerwise learning rate momentum, default 0.99.
\State \textbf{Initialization:} $k \gets 0$ and initialize $\{W_l\}_{l=0}^{L-1}$.
\While{stopping criterion not met}
\State $k \gets k+1$.
\State Sample a mini-batch and update $\{S_l\}_{l=0}^{L-1}$, $\{A_l\}_{l=0}^{L-1}$ with moving average.
\State Compute the inverses $\{S_l^{-1}\}_{l=0}^{L-1}$, $\{A_l^{-1}\}_{l=0}^{L-1}$.
\For{$l=0,\dots,L-1$}
\State Sample a mini-batch $B$.
\State $G_l \gets \nabla_{W_l} \mathcal{L}_B(W)$, $D_l \gets -(A_l+\gamma I)^{-1} G_l (S_l+\gamma I)^{-1}$, $M_l \gets \mu M_l + (1-\mu) D_l$.
\State Compute $\lambda_l$ according to one of two options: \textbf{(i)} $\lambda_l\gets0$. \textbf{(ii)} $\lambda_l^2 \gets \beta\lambda_l^2 - (1-\beta) \mathrm{vec}(G_l)^\top\mathrm{vec}(D_l)$.
\State $W_l \gets W_l + \frac{\eta_k}{1+\kappa \lambda_l} M_l  -\eta_k\omega W_l$.
\EndFor
\EndWhile
\end{algorithmic}
\end{algorithm}

The network is trained by minimizing $\LCal(\theta) = \EE_{(x,y) \sim \DCal}[L(f_\theta(x), y)]$, where $L$ is a loss function, and the FIM is $P(\theta) = \EE_{(x,y) \sim \DCal}[ \nabla_\theta \log p_\theta(y \mid x) (\nabla_\theta \log p_\theta(y \mid x))^\top]$ where $p_\theta(y \mid x)$ denotes the model's predictive distribution. For the weight matrix $W_l$, the gradient of $\LCal$ with respect to $W_l$ can be written in terms of the input activation and the back-propagated signal. In particular, $\nabla_{W_l} L = g_l a_l^\top$, where $g_l = \nabla_{s_l} L \in \br^{d_{\textnormal{out}}}$ is the gradient of $L$ with respect to the pre-activations $s$. The Fisher block corresponding to $W_l$, equivalently, the diagonal block of the Gauss–Newton matrix, is given by $P_l = \EE[\operatorname{vec}(\nabla_{W_l} L) \operatorname{vec}(\nabla_{W_l} L)^\top] = \EE[(a_l\otimes g_l)(a_l \otimes g_l)^\top]$. The direct computation of $P_l$ is intractable, as it scales with the square of the number of parameters in $W$. The key idea behind KFAC is to treat $a_l$ and $g_l$ as independent, yielding $P_l \approx \EE[a_l a_l^\top] \otimes \EE[g_lg_l^\top]$. For any $X \in \br^{d_{\textnormal{out}} \times d_{\textnormal{in}}}$, we have $P^{-1}_l \mathrm{vec}(X) \approx (\EE[a_la_l^\top])^{-1} X (\EE[g_lg_l^\top])^{-1} = A_l^{-1}XS_l^{-1}$,
where $A_l = \EE[a_la_l^\top]$ and $S_l = \EE[g_lg_l^\top]$. Consequently, denoting the partial gradient of $\mathcal{L}$ with respect to $W_l$ as $\nabla_{W_l}\mathcal{L}(W) \in \mathbb{R}^{d_{\mathrm{out}} \times d_{\mathrm{in}}}$, the natural gradient (i.e., the Gauss--Newton update) is approximated by $A_l^{-1}\nabla_{W_l}\mathcal{L}(W)\,S_l^{-1}$.

In practice, it is common to add regularization to $A_l$ and $S_l$ to improve both generalization and numerical stability \cite{Martens-2015-Optimizing}. The Gauss-Newton update with Tikhonov regularization \cite{Martens-2015-Optimizing} is $D_l = -(A_l + \gamma I)^{-1} \nabla_{W_l} \mathcal{L}(W) (S_l + \gamma I)^{-1}$.
We interpret $D_l$ as an instance of the regularized Newton direction $d_j = -(\nabla^2(f + F)(x_j))^{-1}\nabla f(x_j)$ from Algorithm~\ref{algorithm:RNM}, where the Hessian regularization term $\nabla^2 F(x_j)$ is replaced by the scalar shift $\gamma I$, and the damping parameter $\gamma$ plays the role of the weak self-concordance parameter $\ell$. Accordingly, we identify $\lambda_l = \sqrt{-\operatorname{vec}(D_l)^\top \operatorname{vec}(\nabla_{W_l}\mathcal{L}(W))}$ as a layer-wise approximation of the quantity $\lambda_j = \sqrt{-\nabla f(x_j)^\top d_j}$ computed in Line~4 of Algorithm~\ref{algorithm:RNM}---namely, the square root of the inner product between the gradient and the negative update direction, which serves as the regularized Newton decrement. Our KFAC-WSC implementation (Algorithm~\ref{algo:KFAC-WSC}) therefore shares the same underlying structure as Algorithm~\ref{algorithm:RNM} and the classical damped Newton method for convex self-concordant functions~\cite{Nesterov-2018-Lectures}:
the main difference over regular Gauss-Newton's method is that the
step size applied to the (regularized) Newton update is taken proportional to $\frac{1}{1 + \kappa\lambda_l}$, where $\lambda_l$ is the square root of the regularized Newton decrement, as prescribed by Line~5 of Algorithm~\ref{algorithm:RNM}. Consequently, the step sizes
in KFAC-WSC are different for different layers.

We follow the techniques of \cite{Zhang-2019-Three} and evaluate our algorithms by training VGG16 \cite{Simonyan-2015-Very} and ResNet32 \cite{He-2016-Deep, Zagoruyko-2016-Wide} on CIFAR-10 and CIFAR-100 \cite{Krizhevsky-2009-Learning}. The test accuracies of SGD and Adam \cite{Kingma-2015-Adam} are copied from \cite{Zhang-2019-Three}, where a grid search was conducted to find the best hyperparameters. For KFAC \cite{Martens-2015-Optimizing} and KFAC-WSC (our method), we conducted our own experiments on a single NVIDIA V100 SXM2 GPU from San Diego Supercomputer Center \cite{Strande-2021-Expanse}. 
The results are summarized in Table~\ref{tab:cnn-experiments}. Both KFAC and KFAC-WSC were trained for 100 epochs, and the learning rate was decayed by a factor of $0.1$ at the 40th and 80th epochs. We performed grid searches to identify the hyperparameters that maximize test accuracy during training. For KFAC, we adopted the best hyperparameters from \cite{Zhang-2019-Three}, on top of which we tuned the learning rate over $\{3\times 10^{-4}, 1\times 10^{-3}, 3\times 10^{-3}\}$, selecting the best learning rate based on validation accuracy. Our results match \cite{Zhang-2019-Three} closely. For KFAC-WSC, the initial learning rate was selected from $\eta_0 \in \{10^{-3}, 3\times 10^{-3}, 10^{-2}\}$. The weight decay factor was chosen from $\omega \in \{10^{-2}, 3\times 10^{-2}, 10^{-1}, 3\times 10^{-1}, 1.0\}$, and the damping factor from $\gamma \in \{3\times 10^{-3}, 10^{-2}, 3\times 10^{-2}\}$. This setup matches the $3\times5\times3$ grid used in \cite{wang-kfac-pytorch}, a PyTorch implementation of \cite{Zhang-2019-Three}. In practice, using the same hyperparameters as KFAC for KFAC-WSC leads to degraded performance. This is evident since we are using smaller learning rates for each layer. Our method improved training efficiency on VGG16 for CIFAR-10 and CIFAR-100, while being outperformed by KFAC on ResNet32.


A convergence analysis of KFAC \cite{Martens-2015-Optimizing} and, its extension, KFAC-WSC is beyond the scope of this paper. Rather, the goal of this experiment is to empirically assess the effectiveness of incorporating the self-concordant step size rule from Algorithm~\ref{algorithm:RNM} into a practical second-order optimizer.

\begin{table}[!t]
\centering
\caption{Test accuracy of different neural network optimizers. We replicate the KFAC experiments from \cite{Zhang-2019-Three} using 10 random seeds and copied the results of SGD and Adam (\cite{Kingma-2015-Adam,Loshchilov-2017-Decoupled}) from \cite{Zhang-2019-Three}. Values following ``$\pm$'' denote standard deviations across seeds. $(\eta_0^*,\omega^*,\gamma^*)$ are the best used initial learning rate, weight decay, and damping factor respectively selected from grid searches.} \label{tab:cnn-experiments}
\begin{tabular}{|c|c||c|c||cc||cc|}
\hline
 &  & SGD & Adam &
\multicolumn{2}{c||}{KFAC} &
\multicolumn{2}{c|}{KFAC-WSC} \\ \hline
 Dataset & Model & Acc. & Acc. & Acc. & $(\eta_0^*,\omega^*,\gamma^*)$ & Acc. & $(\eta_0^*,\omega^*,\gamma^*)$ \\ \hline
CIFAR10 & VGG16 		& 93.39 	& 93.62 	& 93.88{\tiny$\pm$0.16} & {\tiny$(1\times 10^{-3},0.3,1\times 10^{-3})$}& \textbf{94.13{\tiny$\pm$0.17}} & {\tiny$(3\times 10^{-3},3\times 10^{-2},3\times 10^{-2})$}\\ \hline
CIFAR10 & ResNet32 	& 95.14 	& 94.66 	& \textbf{95.24{\tiny$\pm$0.13}}&{\tiny$(3\times 10^{-4},1,1\times 10^{-3})$}& 95.07{\tiny$\pm$0.14} & {\tiny$(3\times 10^{-3},3\times 10^{-2},3\times 10^{-2})$} \\ \hline
CIFAR100 & VGG16  		& 73.31 	& 74.22 	& 73.53{\tiny$\pm$0.47}& {\tiny$(1\times 10^{-3},1,1\times10^{-3})$
} & \textbf{74.27{\tiny$\pm$0.19}} & {\tiny$(3\times 10^{-3},3\times 10^{-2},3\times 10^{-2})$}\\ \hline
CIFAR100 & ResNet32 	& 77.73 	& 77.40 	& \textbf{78.08{\tiny$\pm$0.21}} & {\tiny$(3\times 10^{-4},1,1\times 10^{-3})$}& 77.07{\tiny$\pm$0.39} & {\tiny$(3\times 10^{-3},3\times 10^{-2},3\times 10^{-3})$} \\ \hline
\end{tabular} \vspace*{-1.5em}
\end{table}

\section{Conclusions}\label{sec:conclu}
We generalized the notion of self-concordance to non-convex settings by introducing weakly self-concordant functions and $F$-based self-concordant functions. Based on these two classes, we proposed regularized Newton and adaptive regularization methods and proved their global and local convergence guarantees. We also conducted a few preliminary numerical experiments to evaluate their effectiveness. Indeed, our adaptive regularization method outperformed adaptive cubic regularization, L-BFGS, and trust-region methods on non-negative matrix factorization problems in terms of both MSE and KL divergence losses. Our KFAC-based regularized Newton method improved training efficiency on VGG16 for CIFAR-10 and CIFAR-100, while being outperformed by KFAC on ResNet32. In summary, we believe that the analytic and experimental results that we presented illustrate the potential
of extending the powerful properties of self-concordance
to non-convex optimization.

\section*{Acknowledgments} The second author is partially supported by a start-up fund at the University of Hong Kong and by the Hong Kong Research Grants Council under ECS project 27301425. The third author is partially supported by a start-up fund at Columbia University. We also thank the support of GPU hours from NSF ACCESS \cite{Boerner-2023-Access}.

\appendix
\section{Additional Results on Weak Self-Concordant Functions}\label{sec:appendix}
In Section~\ref{subsec:convergence-to-a-proximal-stationary-point}, we present an alternative way to measure stationarity for weakly self-concordant functions. In Section~\ref{subsec:IPPM}, we introduce an inexact proximal point method for minimizing weakly self-concordant functions that relies on Hessian–vector products (HVPs) and achieves an iteration complexity comparable to that of RNM and ARM, with high probability. In Section~\ref{subsec:appendix-missing-proofs}, we prove our convergence results.

For a self-concordant function $f:\mathbb{R}^n\to\mathbb{R}\cup\{+\infty\}$ and $x\in\dom(f)$, we define $\lambda_f(x) = \sqrt{\nabla f(x)^\top (\nabla^2f(x))^{-1}\nabla f(x)}$. For a matrix $A \succ 0$, we denote its condition number by $\mathrm{cond}(A)$ and define $\|x\|_A=\sqrt{x^\top Ax}$. We also define the following absolute constants $\alpha_\star = 0.0001$, $R_1 = 0.49$, $R_2 = \frac{1}{\sqrt{1 - \alpha_\star}} R_1$, $R_3 = \frac{\sqrt{1 - \alpha_\star}}{1 + \alpha_\star} R_1$, $C_1 = 9$, $C_2 = 0.95$, and $C_3 = \frac{R_2^{-1} - 1}{R_2^{-1} - 2}$.

\subsection{Convergence to a Proximal Stationary Point}
\label{subsec:convergence-to-a-proximal-stationary-point}
For any $\eta>0$, we define the 
Moreau envelope $f_{\eta}:\mathbb R^n \to \mathbb R\cup \{+\infty\}$ \cite{Moreau-1965-Proximite} of $f$ by $f_{\eta}(y) = \inf_{x\in \mathbb{R}^n}\{f(x) + \tfrac{1}{2\eta}\|y-x\|^2 \}$. The gradient norms of the Moreau envelope, $\|\nabla f_{\eta}(x)\|$, have been used as a measure of stationarity for weakly convex functions \cite{Davis-2019-Stochastic}. For weakly self-concordant functions, we show next that RNM (see Algorithm~\ref{algorithm:RNM}) and Algorithm~\ref{algorithm:ARM} can find a point with $\|\nabla f_{\eta}(x)\| \le \epsilon$ (for any small enough $\eta$) in $O(\epsilon^{-2})$ iterations. 
\begin{lemma}
\label{lem:Moreau-envelope-condition}
Suppose that $f:\mathbb{R}^n\to\mathbb{R}\cup\{+\infty\}$ is $(\kappa,\ell)$-weakly self-concordant and lower-bounded. For any $\mu>\ell$, $\epsilon>0$, and $x\in\dom(f)$, if
$\nu_{f,\frac12\mu\|\cdot\|^2}(x)\leq \frac{\sqrt{\mu-\ell}\epsilon}{\mu + \kappa\sqrt{\mu-\ell}\epsilon}$, then we have $\|\nabla f_{1/\mu}(x)\| \leq \epsilon$.
\end{lemma}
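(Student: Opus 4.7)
The plan is to view the proximal subproblem as minimizing a strongly convex, self-concordant auxiliary function and then invoke a classical self-concordance displacement bound to control the distance from $x$ to the prox point.

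First I would set $F(x)=\tfrac{\mu}{2}\|x\|^2$ and observe that $f+F=(f+\tfrac{\ell}{2}\|\cdot\|^2)+\tfrac{\mu-\ell}{2}\|\cdot\|^2$ is $\kappa$-self-concordant (weak self-concordance plus a convex quadratic, whose vanishing third derivative does not affect the self-concordance constant) and $(\mu-\ell)$-strongly convex. Fixing $x$, define $g(y)=f(y)+\tfrac{\mu}{2}\|y-x\|^2$; since $g$ differs from $f+F$ only by a linear term and a constant, it inherits $\kappa$-self-concordance and $(\mu-\ell)$-strong convexity. Hence $g$ has a unique minimizer $x^\star=\mathrm{prox}_{f/\mu}(x)$, and the standard Moreau envelope identity gives $\nabla f_{1/\mu}(x)=\mu(x-x^\star)$, so it suffices to prove $\mu\|x-x^\star\|\le\epsilon$.

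A direct computation yields $\nabla g(x)=\nabla f(x)$ and $\nabla^2 g(x)=\nabla^2 f(x)+\mu I$, so the Newton decrement of $g$ at $x$ coincides exactly with $\lambda:=\nu_{f,F}(x)$. The central step is then to apply the classical self-concordance displacement bound, in the spirit of \cite[Theorem~5.1.13]{Nesterov-2018-Lectures} rescaled from the standard case $\kappa=1$: whenever $\kappa\lambda<1$,
$$\|x-x^\star\|_{\nabla^2 g(x)}\le\frac{\lambda}{1-\kappa\lambda}.$$
The hypothesized bound on $\lambda$ immediately implies $\kappa\lambda<1$, so this applies. Combining with $\nabla^2 g(x)\succeq(\mu-\ell)I$ from strong convexity yields $\|x-x^\star\|\le\lambda/[(1-\kappa\lambda)\sqrt{\mu-\ell}]$. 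Elementary algebra then shows that solving $\mu\lambda/[(1-\kappa\lambda)\sqrt{\mu-\ell}]\le\epsilon$ for $\lambda$ reproduces exactly the threshold $\sqrt{\mu-\ell}\,\epsilon/(\mu+\kappa\sqrt{\mu-\ell}\,\epsilon)$ stated in the hypothesis.

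The main obstacle is producing the displacement bound with this precise form; once it is in hand the remaining algebra is transparent. The rescaling from the standard $\kappa=1$ case follows the usual trick of replacing $g$ by $\kappa^2 g$, which multiplies the local norm and the Newton decrement each by $\kappa$, so the factor $(1-\kappa\lambda)^{-1}$ emerges naturally. The exact matching between the denominator produced by the displacement bound and the denominator appearing in the hypothesis is the reason the threshold in the statement takes this particular rational form.
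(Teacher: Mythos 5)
Your proof is correct and follows essentially the same route as the paper: both use the Moreau envelope gradient identity $\nabla f_{1/\mu}(x) = \mu(x - \arg\min\varphi)$, the strong-convexity lower bound $\nabla^2\varphi(x) \succeq (\mu-\ell)I$ to pass from the local norm to the Euclidean norm, and the self-concordance displacement bound $\|x - \arg\min\varphi\|_{\nabla^2\varphi(x)} \leq \lambda/(1-\kappa\lambda)$ (the paper writes this via \cite[Eq.~(5.2.4)]{Nesterov-2018-Lectures} with $\omega_\star'(t)=t/(1-t)$, which is identical to your form). The only cosmetic difference is that the paper plugs the threshold $\epsilon'$ directly into the bound rather than solving the inequality for $\lambda$, but the algebra is the same.
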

\begin{proof}{Proof}
Let $\epsilon'=\frac{\sqrt{\mu-\ell}\epsilon}{\mu + \kappa\sqrt{\mu-\ell}\epsilon}$ and $\varphi(y) = f(y) + \frac{1}{2}\mu \|y-x\|^2$. Thus, we assume $\nu_{f,\frac12\mu\|\cdot\|^2}(x)=\lambda_{\varphi}(x) \leq \epsilon'$. It follows from \cite[Lemma 2.2]{Davis-2019-Stochastic} that $\tfrac1{\mu}{\|\nabla f_{1/\mu}(x)\|} = \|x - \arg\min \varphi\|$. Moreover, $\nabla^2\varphi\geq (\mu-\ell) I$ implies that $\sqrt{\mu - \ell} \|x - \arg\min \varphi\|\leq \|x - \arg\min \varphi\|_{\nabla^2 \varphi(x)}$. We define $\omega_\star'(t) = \frac{t}{1 - t}$ for $t\in[0,1)$ and apply \cite[Eq.~(5.2.4)]{Nesterov-2018-Lectures} with $f = \varphi$. In the notation of \cite{Nesterov-2018-Lectures}, we note that $x_f^\star = \arg\min \varphi$, $M_f = \kappa$, $\|\cdot\|_x = \|\cdot\|_{\nabla^2\varphi(x)}$. Thus, we have $\|x - \arg\min_y \varphi(y)\|_{\nabla^2 \varphi(x)} \leq \kappa ^{-1}\omega_\star'(\kappa \lambda_{\varphi}(x))$. Therefore, we obtain $\frac{\sqrt{\mu-\ell}}{\mu} \|\nabla f_{1/\mu}(x)\|\leq\|x - \arg\min \varphi\|_{\nabla^2 \varphi(x)} \leq \kappa ^{-1}\omega_\star'(\kappa \lambda_{\varphi}(x))\leq \kappa^{-1} \omega_\star'(\kappa \epsilon') = \tfrac{\sqrt{\mu - \ell}}{\mu} \epsilon$. \Halmos
\end{proof}

\subsection{Inexact Proximal Point Method (IPPM)}
\label{subsec:IPPM}

We develop an algorithm for minimizing a $(\kappa,\ell)$-weakly self-concordant function $f$ using only gradient and Hessian–vector product (HVP) oracles. Our approach is based on the proximal point method \cite{rockafellar1976monotone}. Given the current iterate \( z_j \in \dom(f)\), we consider the regularized objective \( f_j : \mathbb{R}^n \to \mathbb{R}\cup\{+\infty\} \) defined as \( f_j(y) = f(y) + \frac{1}{2} \mu \| y - z_j \|^2 \). The update of the proximal point method is given by \( z_{j+1} = \arg\min  f_j \). When \( \mu > \ell \), each function \( f_j \) is self-concordant.

To solve these subproblems inexactly, we introduce Algorithm~\ref{algorithm:Newton-CG}. The algorithm combines the Newton–CG (conjugate gradient) method \cite[Algorithm~7.1]{Wright-2006-Numerical} with the Lanczos method \cite{Kuczynski-1992-Estimating}. By exploiting self-concordance, it avoids line searches, which is its main advantage over traditional Newton–CG. For a self-concordant function \( f \), the algorithm outputs, with probability $1-\delta$, \( y\in \dom(f) \) such that either \( \lambda_f(y) \leq \epsilon_1 \) and \( \lambda_f(x_0) \geq \sqrt{\frac{1 - \alpha_\star}{1 + \alpha_\star}} \epsilon_0 \), or \( y = x_0 \) and \( \lambda_f(x_0) \leq \epsilon_0 \). A more precise convergence theorem, together with its proof, is deferred to Theorem~\ref{thm:convex-SC-newton-CG} in Appendix~\ref{subsec:appendix-missing-proofs}. This algorithm may be of independent interest in convex optimization.

\begin{algorithm}
\caption{A Newton-CG Method for Self-Concordant Functions}
\label{algorithm:Newton-CG}
\vskip6pt
\begin{algorithmic}[1] \small
\State \textbf{Input:} $\kappa\geq0$, $\epsilon_0>0$, $\epsilon_1>0$, $\beta>1$, $\delta\in[0,1)$.
\State \textbf{Initialization:} $x_0\in \dom(f)$.
\State $\beta_{0} \gets \text{sqrt-cond}(\nabla^2f(x_{0}),0.5\delta,\beta)$, $\texttt{flag}\gets \texttt{True}$.
\For{$k = 0, 1, 2, \dots$}
    \State $g_k\gets \nabla f(x_k)$, $h_k \gets$ CG-Inverse$(\nabla^2 f(x_k), -g_k, \beta_k,\alpha_\star)$.
    \State $\rho_k \gets -h_k^\top g_k$, $\delta_k \gets \nabla^2f(x_k)[h_k,h_k]$.
    \Return $x_k$ \textbf{if} $\rho_k \leq (1-\alpha_\star)\epsilon_1^2$ \textbf{or} ($k=0$ \textbf{and} $\rho_0 \leq (1-\alpha_\star)\epsilon_0^2$).
    \If{$\rho_k > \frac{R_1^2}{\kappa^2}$}
    \State $t_k \gets \frac{\rho_k}{\delta_k+\kappa \rho_k\sqrt{\delta_k}}$, $B_k\gets (1+ \frac{\sqrt{1+\alpha_\star}}{1-\alpha_\star} \kappa \sqrt{\rho_k})^2$, $\beta_{k+1} \gets B_k\beta_k$.
    \Else
    \If{\texttt{flag}}
    \State $\beta^\star \gets \text{sqrt-cond}(\nabla^2f(x_{k}),0.5\delta,\beta_{k})$, $\beta_{\mathrm{local}} \gets C_3^4 \beta^\star$, $\texttt{flag}\gets \texttt{False}$.
    \EndIf
    \State $t_k \gets \frac{\rho_k}{\delta_k+2\kappa \rho_k\sqrt{\delta_k}}$, $\beta_{k+1}\gets \beta_{\mathrm{local}}$.
    \EndIf
    \State $x_{k+1}\gets x_k + t_kh_k$.
\EndFor
\State \textbf{function} sqrt-cond($H,\delta,\beta$):
\State \quad Use the Lanczos algorithm to obtain $\lambda_1,\lambda_2$ such that $|\lambda_1-\lambda_{\max}(H)|\leq \frac{1}{3}\lambda_{\max}(H)$ and $|\lambda_2-\lambda_{\min}(H)|\leq \frac{1}{3}\lambda_{\min}(H)$. \textbf{return} $\sqrt{2\lambda_1/\lambda_2}$.
\State \textbf{function} CG-Inverse($H,g,\beta,\alpha$):
\State \quad Perform $\min\left\{n,\left\lfloor\log_{\tfrac{\beta-1}{\beta+1}} {0.5\alpha}\right\rfloor + 1\right\}$ CG iterations to compute $H^{-1}g$ start from $0$.
\end{algorithmic}
\end{algorithm}

We are now ready to introduce Algorithm~\ref{algo:IPPM} (IPPM). At each iterate \( z_j \), IPPM computes an inexact proximal point \( z_{j+1} \) by approximately solving the subproblem \(\min f_j \) to a prescribed accuracy using Algorithm~\ref{algorithm:Newton-CG}. The convergence of IPPM is established in the Theorem \ref{thm:IPPM}, whose proof is deferred to the end of Appendix~\ref{subsec:appendix-missing-proofs}. This result relies on Assumption~\ref{assumption:proximal-point-condition-number}, which requires that all proximal subproblems are sufficiently well conditioned.

\begin{algorithm}
\caption{Inexact Proximal Point Method (IPPM)}
\label{algo:IPPM}
\vskip6pt
\begin{algorithmic}[1] \small
\State \textbf{Input:} $\kappa\geq 0$, $\mu>0$, $\epsilon>0$, $\beta>1$, $\delta\in[0,1)$, $\Delta>0$.
\State \textbf{Initialization:} $z_0\in\dom(f)$.
\State $K\gets\frac{8\mu\Delta}{(\mu - \ell)\epsilon^2}$, $\delta'\gets \left(K+2\right)^{-1}\delta$, $\beta' \gets C_3^2\beta$, $\epsilon' \gets \left(\sqrt{\frac{1-\alpha_\star}{1+\alpha_\star}}-0.5\right)\epsilon$
\For{$j = 0, 1, 2, \dots$}
\State Set $f_j:\mathbb{R}^n\to\mathbb{R}\cup\{+\infty\}$ with $f_j(y):= f(y) + \frac{1}{2}\mu\|y-z_j\|^2$.
\State $z_{j+1}\gets$ The output of Algorithm~\ref{algorithm:Newton-CG} with objective function $f_j$, input $\kappa\gets\kappa$, $\epsilon_0\gets\epsilon$, $\epsilon_1\gets\epsilon'$, $\beta\gets\beta'$, $\delta\gets\delta'$, and initialization $x_0\gets z_j$.
\Return $z_j$ \textbf{if} $z_{j+1}=z_j$.
\EndFor
\end{algorithmic}
\end{algorithm}

\begin{assumption}
\label{assumption:proximal-point-condition-number}
In IPPM, there exists $B>0$ such that $\mathrm{cond}(\nabla^2 f(z_0)+\mu I)\leq B^2$ and we have $\mathrm{cond}(\nabla^2f_j(\arg\min f_j)) \leq B^2$ for all $j\geq 0$.
\end{assumption}
\begin{theorem}
\label{thm:IPPM}
Assume that $f$ is $(\kappa,\ell)$-weakly self-concordant, lower bounded, and that Assumption~\ref{assumption:proximal-point-condition-number} holds. Let $\mu > \ell$, $\beta \geq B$, $\Delta \geq f(z_0) - \inf_y f(y)$, and $\epsilon' \leq \tfrac{R_2}{\kappa}$. Then, with probability at least $1 - \delta$, IPPM terminates in at most $\left\lfloor \tfrac{8\mu(f(z_0)-\inf_y f(y))}{(\mu-\ell)\epsilon^2} \right\rfloor + 2$ iterations, and its output $y$ satisfies $\nu_{f,\tfrac{1}{2}\mu \|\cdot\|^2}(y) \leq \epsilon$. Moreover, the total number of HVP computations performed by IPPM is $O\!\left(\epsilon^{-2}(\log(\epsilon^{-1}) + \log(\delta^{-1}))\right)$.
\end{theorem}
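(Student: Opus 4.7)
The plan is to reduce the IPPM analysis to a per-subproblem application of the Newton--CG guarantee (Theorem~\ref{thm:convex-SC-newton-CG}) plus a proximal-point descent lemma. The starting observation is that $f_j(y) = f(y) + \tfrac{\mu}{2}\|y-z_j\|^2$ is closed, $\kappa$-self-concordant (since $\mu > \ell$ and $f + \tfrac{\ell}{2}\|\cdot\|^2$ is), and $(\mu-\ell)$-strongly convex. Moreover, $\nabla f_j(z_j) = \nabla f(z_j)$ and $\nabla^2 f_j(z_j) = \nabla^2 f(z_j) + \mu I$, so $\lambda_{f_j}(z_j) = \nu_{f,\tfrac{1}{2}\mu\|\cdot\|^2}(z_j)$. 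Consequently, IPPM terminates (returning $y=z_j$ with $z_{j+1}=z_j$) exactly when the call to Algorithm~\ref{algorithm:Newton-CG} exits at $k=0$, and the success event of that theorem delivers $\lambda_{f_j}(z_j)\le \epsilon$, i.e. the desired $\nu_{f,\tfrac{1}{2}\mu\|\cdot\|^2}(y)\le \epsilon$.

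The main obstacle is bounding the number of outer iterations. I would establish the per-iteration descent
\[
f(z_j) - f(z_{j+1}) \;=\; \bigl[f_j(z_j) - f_j(z_{j+1})\bigr] + \tfrac{\mu}{2}\|z_{j+1}-z_j\|^2 \;\ge\; \tfrac{(\mu-\ell)\,\epsilon^2}{8\mu}
\]
whenever Algorithm~\ref{algorithm:Newton-CG} does not terminate at $k=0$. The bracket is handled by combining the self-concordance estimates $f_j(z_j)-\inf f_j \ge \kappa^{-2}\omega(\kappa\lambda_{f_j}(z_j))$ and $f_j(z_{j+1})-\inf f_j \le \kappa^{-2}\omega_\star(\kappa\epsilon')$, the latter valid since $\kappa\epsilon' \le R_2 < 1$. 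The non-termination condition $\rho_0 > (1-\alpha_\star)\epsilon^2$, together with the CG approximation guarantee from the \texttt{CG-Inverse} step (which is why $\alpha_\star$ is fixed so small and the iteration count is tuned to the condition-number proxy $\beta$), yields $\lambda_{f_j}(z_j) \ge \sqrt{(1-\alpha_\star)/(1+\alpha_\star)}\,\epsilon$, and the choice $\epsilon' = (\sqrt{(1-\alpha_\star)/(1+\alpha_\star)} - \tfrac12)\epsilon$ is engineered so that $\omega(\kappa\lambda_{f_j}(z_j)) - \omega_\star(\kappa\epsilon') \gtrsim \epsilon^2$; the factor $(\mu-\ell)/\mu$ then appears via the strong convexity bound $\nabla^2 f_j \succeq (\mu-\ell)I$ when converting to a decrease in $f$ rather than in $f_j$ (equivalently, via the Moreau-envelope identity $\mu\|z_j - \arg\min f_j\| = \|\nabla f_{1/\mu}(z_j)\|$ used in Lemma~\ref{lem:Moreau-envelope-condition}). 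Telescoping and $f$ being lower bounded by $f(z_0)-\Delta$ then gives $K = \lfloor 8\mu\Delta/((\mu-\ell)\epsilon^2)\rfloor$ non-terminating iterations, plus one for the terminating call (and an extra $+1$ for rounding), yielding $K+2$.

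For the probability, each of the at most $K+2$ Newton--CG calls is invoked with failure parameter $\delta' = \delta/(K+2)$, and Assumption~\ref{assumption:proximal-point-condition-number} with $\beta\ge B$ guarantees the Lanczos-based condition-number estimate used inside Algorithm~\ref{algorithm:Newton-CG} is valid at both $z_j$ and (after entering the local region) at $\arg\min f_j$. A union bound over the iterations gives overall success probability at least $1-\delta$. The HVP accounting then proceeds by multiplying three factors: (i) $O(\epsilon^{-2})$ outer proximal iterations; (ii) within each subproblem, Theorem~\ref{thm:convex-SC-newton-CG} yields $O(\log(1/\epsilon))$ outer Newton--CG steps (a constant number of damped steps drive $\kappa\lambda_{f_j}$ below the quadratic-convergence threshold $R_1$, after which the residual decays doubly exponentially until $\lambda_{f_j}\le\epsilon'$); and (iii) each such step invokes \texttt{CG-Inverse} and possibly \texttt{sqrt-cond}, both running $O(\sqrt{B^2}) = O(B)$ CG/Lanczos iterations, with the Lanczos routines contributing the additional $\log(\delta^{-1})$ factor from the randomized eigenvalue estimation. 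Aggregating gives the claimed $O(\epsilon^{-2}(\log(\epsilon^{-1})+\log(\delta^{-1})))$ HVPs.

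The trickiest technical step will be the self-concordant/strong-convex interplay that produces the clean $(\mu-\ell)\epsilon^2/(8\mu)$ decrement: the self-concordance bounds naturally give constants like $\kappa^{-2}$, while the proximal structure demands the ratio $(\mu-\ell)/\mu$. Matching these requires carefully exploiting that the regularizer $\tfrac{\mu}{2}\|y-z_j\|^2$ contributes its full curvature $\mu I$ to $\nabla^2 f_j$ while $f_j - \inf f_j$ can be lower-bounded via $\omega$ without ever invoking a global Lipschitz Hessian condition on $f$.
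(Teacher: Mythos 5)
Your overall scaffold (proximal descent telescope, union bound over the $K+2$ Newton--CG calls, HVP accounting that multiplies three factors) matches the paper, but the central per-iteration decrement argument is where you diverge and where there is a gap.

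Your decomposition
\[
f(z_j) - f(z_{j+1}) \;=\; \bigl[f_j(z_j) - f_j(z_{j+1})\bigr] + \tfrac{\mu}{2}\|z_{j+1}-z_j\|^2
\]
is correct, and you propose to handle the bracket via the self-concordant sandwich $\kappa^{-2}\omega(\kappa\lambda_{f_j}(z_j)) \le f_j(z_j)-\inf f_j$, $f_j(z_{j+1})-\inf f_j \le \kappa^{-2}\omega_\star(\kappa\epsilon')$, together with the lower bound $\lambda_{f_j}(z_j) \ge \sqrt{(1-\alpha_\star)/(1+\alpha_\star)}\,\epsilon$ from the non-termination of the $j$-th call. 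The paper instead \emph{discards} the bracket and works only with $\tfrac{\mu}{2}\|z_{j+1}-z_j\|^2$: writing $H_{j+1}=\nabla^2 f(z_{j+1})+\mu I$, it uses $H_{j+1}\succeq(\mu-\ell)I$ to get $\|z_{j+1}-z_j\|^2 \ge (\mu-\ell)\|z_{j+1}-z_j\|_{H_{j+1}^{-1}}^2$, then uses $\nabla f(z_{j+1})-\nabla f_j(z_{j+1}) = -\mu(z_{j+1}-z_j)$ and a triangle inequality in the $H_{j+1}^{-1}$ norm to obtain $\|z_{j+1}-z_j\|_{H_{j+1}^{-1}} \ge \tfrac{1}{\mu}\bigl(\lambda_{f_{j+1}}(z_{j+1})-\lambda_{f_j}(z_{j+1})\bigr)$. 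The two ingredients $\lambda_{f_j}(z_{j+1})\le\epsilon'$ and $\lambda_{f_{j+1}}(z_{j+1})\ge\sqrt{(1-\alpha_\star)/(1+\alpha_\star)}\,\epsilon$ (the latter from non-termination of call $j+1$ at its first iteration, not call $j$) then give the clean $\tfrac{1}{2}\epsilon$ gap, and hence $\tfrac{\mu}{2}\|z_{j+1}-z_j\|^2 \ge \tfrac{(\mu-\ell)\epsilon^2}{8\mu}$.

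The gap in your version is the handling of the $(\mu-\ell)/\mu$ factor. You claim it appears ``when converting to a decrease in $f$ rather than in $f_j$,'' but the identity you already wrote down shows $f(z_j)-f(z_{j+1}) \ge f_j(z_j)-f_j(z_{j+1})$ with no conversion needed; your bracket bound $\kappa^{-2}\bigl(\omega(\kappa s\epsilon)-\omega_\star(\kappa(s-\tfrac12)\epsilon)\bigr)$ is $\mu$- and $\ell$-free. That is not automatically a contradiction, but you would then need to verify the inequality $\omega(s u)-\omega_\star\bigl((s-\tfrac12)u\bigr) \ge \tfrac{1}{8}u^2$ over the admissible range $u=\kappa\epsilon\in(0,R_2/(s-\tfrac12)]$ to match the stated constant, and a direct check at the upper endpoint ($u\approx 0.98$) gives a ratio around $0.118 < 0.125$, so the claimed constant does not follow from the bracket alone (and you have not proposed how to recover it from the discarded quadratic term). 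The reference to Lemma~\ref{lem:Moreau-envelope-condition} and the identity $\mu\|z_j-\arg\min f_j\|=\|\nabla f_{1/\mu}(z_j)\|$ is also a red herring here: that concerns the exact proximal map, while IPPM computes an inexact step, and the paper deliberately avoids $\arg\min f_j$ in the outer-iteration bound. You are missing the triangle-inequality trick comparing $\lambda_{f_j}(z_{j+1})$ and $\lambda_{f_{j+1}}(z_{j+1})$ at the \emph{same} point $z_{j+1}$; that is the piece that both cleanly produces the $(\mu-\ell)/\mu$ factor and avoids the delicate $\omega/\omega_\star$ cancellation.
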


\subsection{Convergence Analysis}\label{subsec:appendix-missing-proofs}
In this section, we prove the convergence of Algorithms \ref{algorithm:Newton-CG} and \ref{algo:IPPM}. To do so, we first present some technical lemmas. Lemma~\ref{lem:lanczos} analyzes the function sqrt-cond in Algorithm~\ref{algorithm:Newton-CG} by a classical result \cite{Kuczynski-1992-Estimating} on the Lanczos method. Lemma~\ref{lem:cg-apx} analyzes the function CG-Inverse in Algorithm~\ref{algorithm:Newton-CG} using \cite[Theorem 5.5]{Wright-2006-Numerical}. Lemma~\ref{lem:condition-numbers-of-two-points-near-optimal} describes the relationship between the condition numbers of the Hessians at two nearby points for self-concordant functions.

\begin{lemma}
\label{lem:lanczos}
Let $A\succ 0$. Given $\delta \in [0,1)$ and $\beta \geq \sqrt{\mathrm{cond}(A)}$, sqrt-cond($A, \delta, \beta$) outputs a number in $[\sqrt{\mathrm{cond}(A)},2\sqrt{\mathrm{cond}(A)}]$ with probability at least $1 - \delta$. The total number of matrix-vector multiplications with $A$ required by the method is at most $C_L \min\{n,\, \beta \log(n\delta^{-2})\}$, where $C_L > 0$ is a constant.
\end{lemma}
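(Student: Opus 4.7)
The plan is to decouple the two claims of the lemma, handling accuracy first and cost second. For accuracy, suppose the two internal Lanczos calls succeed in returning $\lambda_1$ and $\lambda_2$ with the stated one-third relative errors. Then $\lambda_1\in[\tfrac{2}{3}\lambda_{\max}(A),\tfrac{4}{3}\lambda_{\max}(A)]$ and $\lambda_2\in[\tfrac{2}{3}\lambda_{\min}(A),\tfrac{4}{3}\lambda_{\min}(A)]$, so $\lambda_1/\lambda_2\in[\tfrac{1}{2}\mathrm{cond}(A),2\,\mathrm{cond}(A)]$, and therefore $\sqrt{2\lambda_1/\lambda_2}\in[\sqrt{\mathrm{cond}(A)},2\sqrt{\mathrm{cond}(A)}]$. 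This reduces the lemma to verifying the iteration complexity and success probability of the two Lanczos estimations.

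For the cost, I would invoke the classical probabilistic analysis of Lanczos due to Kuczynski and Wozniakowski \cite{Kuczynski-1992-Estimating}: for a positive semidefinite matrix $M$, with probability at least $1-\eta$ the largest Ritz value after $k$ Lanczos iterations (started from a uniformly random unit vector) satisfies $\theta_1^{(k)}\geq (1-\varepsilon)\lambda_{\max}(M)$ as long as $k$ is on the order of $\varepsilon^{-1/2}\log(n\eta^{-2})$; in exact arithmetic Lanczos terminates in at most $n$ steps regardless. Applying this to $M=A$ with $\varepsilon=1/3$ and failure budget $\delta/2$ yields $\lambda_1$ in $O(\log(n\delta^{-2}))$ matrix–vector products. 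For the smallest eigenvalue, the natural approach is to run Lanczos on the shifted matrix $M=cI-A$, where $c$ is a validated upper bound on $\lambda_{\max}(A)$ (for example $c=2\lambda_1$, which, conditioned on the success of the first call, dominates $\lambda_{\max}(A)$). Estimating $\lambda_{\max}(M)=c-\lambda_{\min}(A)$ up to relative error $\varepsilon$ translates into estimating $\lambda_{\min}(A)$ up to additive error $\varepsilon c$, so choosing $\varepsilon=\Theta(\lambda_{\min}(A)/\lambda_{\max}(A))=\Theta(1/\mathrm{cond}(A))$ gives the desired one-third relative accuracy on $\lambda_{\min}(A)$. Plugging this $\varepsilon$ into the Kuczynski–Wozniakowski bound produces $O(\sqrt{\mathrm{cond}(A)}\log(n\delta^{-2}))=O(\beta\log(n\delta^{-2}))$ matrix–vector products for the second call, since $\beta\geq\sqrt{\mathrm{cond}(A)}$ by hypothesis.

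To finish, a union bound over the two Lanczos runs (each allotted failure probability $\delta/2$) yields an overall success probability of at least $1-\delta$, and summing the two iteration bounds gives a total matrix–vector product count of $C_L\min\{n,\beta\log(n\delta^{-2})\}$ for a suitable absolute constant $C_L$, where the $\min$ with $n$ absorbs the finite-termination property of Lanczos.

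The main obstacle is handling the shift $c$ cleanly: since $\lambda_{\max}(A)$ is not known a priori, one must commit to using the (random) estimate $\lambda_1$ to form $c$, and then argue, on the high-probability event that the first Lanczos call succeeds, that $c$ is simultaneously a valid upper bound on $\lambda_{\max}(A)$ and not too large (so that the effective tolerance $\varepsilon=\Theta(1/\mathrm{cond}(A))$ still yields the claimed complexity). A mild additional subtlety is tracking the constants in the Kuczynski–Wozniakowski bound so that both the $\log(n\delta^{-2})$ form and the dependence on $\beta$ absorb cleanly into a single $C_L$.
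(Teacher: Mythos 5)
Your proposal is correct and matches the paper's own argument: the paper likewise estimates $\lambda_{\max}(A)$ with Lanczos at tolerance $1/3$, then estimates $\lambda_{\min}(A)$ by running Lanczos on a shifted matrix $\frac{2\lambda_1}{1-\epsilon'}I-A$ with a finer tolerance $\epsilon'=\tfrac{1}{10\beta^2}$ (which plays the role of your $\varepsilon=\Theta(1/\mathrm{cond}(A))$, implementable because $\beta^2\ge\mathrm{cond}(A)$), applies the Kuczy\'nski--Wo\'zniakowski bound to both calls, and finishes with a union bound. The only cosmetic difference is the exact shift ($2\lambda_1$ versus $\tfrac{2\lambda_1}{1-\epsilon'}$) and that the paper phrases the tolerance directly in terms of the known quantity $\beta$, which is precisely the implementability point you raise in your final paragraph.
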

\begin{proof}{Proof}
Let $\epsilon'=\frac{1}{10\beta^2}$. It follows from \cite[Theorem 4.2]{Kuczynski-1992-Estimating} that, with probability at least $1-\frac{1}{2}\delta$, the Lanczos method outputs $\lambda_1$ such that $|\lambda_1-\lambda_{\max}(A)|\leq \frac{1}{3}\lambda_{\max}(A)$ in $O(\min\{n,\, \log(n\delta^{-2})\})$ matrix-vector multiplications. With probability at least $1 - \frac{1}{2}\delta$, the Lanczos method outputs $\lambda'$ such that $\left| \lambda' - \lambda_{\max} \left( \tfrac{2\lambda_1}{1 - \epsilon'} I - A \right) \right|\leq \epsilon' \cdot\allowbreak\lambda_{\max} \left( \tfrac{2\lambda_1}{1 - \epsilon'} I - A \right) \leq \tfrac{1}{3}\lambda_{\min}(A)$ in $O(\min\{n,\beta \log(n\delta^{-2})\})$ matrix-vector multiplications. Letting $\lambda_2 = \frac{2\lambda_1}{1 - \epsilon'} - \lambda'$, then we have $|\lambda_2-\lambda_{\min}(A)|\leq \frac{1}{3}\lambda_{\min}(A)$. Thus, letting $X^2=2\lambda_1/\lambda_2$ suffices. \Halmos
\end{proof}

\begin{lemma}{\cite[Theorem 5.5]{Wright-2006-Numerical}}
\label{lem:cg-apx}
Given a matrix $H \succ 0$ and $\beta \geq \sqrt{\mathrm{cond}(H)}$, CG-Inverse$(H, g, \beta, \alpha)$ returns  $h \in \mathbb{R}^n$ such that $\|h - H^{-1}g\|_H \leq \alpha \|g\|_{H^{-1}}$. The algorithm requires at most $C_{\mathrm{CG}}\min\{n,\, \beta \log(\alpha^{-1})\}$ matrix-vector multiplications with $H$, where $C_{\mathrm{CG}}>0$ is a constant. Moreover, we have $(1-\alpha)\|g\|_{H^{-1}} \leq \|h\|_H \leq (1+\alpha)\|g\|_{H^{-1}}$ and $(1-\alpha) g^\top H^{-1}g \leq h^\top g \leq (1+\alpha) g^\top H^{-1}g$.
\end{lemma}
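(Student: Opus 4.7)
The plan is to invoke the classical convergence theorem for conjugate gradient (CG) applied to $Hx = g$ with initial iterate $0$ -- precisely the content of \cite[Theorem~5.5]{Wright-2006-Numerical} -- and then derive the auxiliary inequalities through triangle-inequality and Cauchy--Schwarz manipulations in the $H$-inner product. By construction, CG-Inverse performs $k$ steps of CG starting from $0$ applied to $Hx=g$, so the target is $x^\star = H^{-1}g$ and $\|x_0 - x^\star\|_H = \|H^{-1}g\|_H = \sqrt{g^\top H^{-1}g} = \|g\|_{H^{-1}}$.

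First I would bound the approximation error $\|h - H^{-1}g\|_H$. The standard contraction bound gives $\|h - H^{-1}g\|_H \leq 2 r^{k}\|g\|_{H^{-1}}$ with $r = (\sqrt{\mathrm{cond}(H)}-1)/(\sqrt{\mathrm{cond}(H)}+1)$. The algorithm sets $k = \min\{n,\lfloor \log_{(\beta-1)/(\beta+1)} (0.5\alpha)\rfloor + 1\}$. Since $x\mapsto (x-1)/(x+1)$ is increasing on $[1,\infty)$, the hypothesis $\beta \geq \sqrt{\mathrm{cond}(H)}$ gives $r \leq (\beta-1)/(\beta+1)$, so the definition of $k$ forces $r^{k} \leq ((\beta-1)/(\beta+1))^{k} \leq 0.5\alpha$, whence $\|h - H^{-1}g\|_H \leq \alpha\|g\|_{H^{-1}}$. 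The alternative branch $k=n$ produces the exact solution, since CG terminates in at most $n$ steps in exact arithmetic, so the bound is trivially valid.

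Next I would translate the iteration count into a matrix-vector-multiplication count. Each CG iteration performs exactly one product with $H$, so the cost equals the value of $k$ above. Using $\log(1 - 2/(\beta+1)) \leq -2/(\beta+1)$, we get $|\log((\beta-1)/(\beta+1))| \geq 2/(\beta+1)$, and thus
\[
\lfloor \log_{(\beta-1)/(\beta+1)}(0.5\alpha)\rfloor + 1 \leq \tfrac{\beta+1}{2}\log(2/\alpha) + 1 = O(\beta\log(\alpha^{-1})).
\]
Absorbing constants into $C_{\mathrm{CG}}$ produces the stated bound $C_{\mathrm{CG}}\min\{n,\beta\log(\alpha^{-1})\}$.

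Finally, the two auxiliary two-sided inequalities follow directly from the approximation bound. For the $H$-norm, the triangle inequality yields $\|h\|_H \leq \|H^{-1}g\|_H + \|h - H^{-1}g\|_H \leq (1+\alpha)\|g\|_{H^{-1}}$, and the reverse triangle inequality gives $\|h\|_H \geq (1-\alpha)\|g\|_{H^{-1}}$. For the pairing with $g$, I would write $h^\top g = g^\top H^{-1}g + \langle h - H^{-1}g,\, H^{-1}g\rangle_H$; Cauchy--Schwarz in the $H$-inner product then gives
\[
|h^\top g - g^\top H^{-1}g| \leq \|h - H^{-1}g\|_H \cdot \|H^{-1}g\|_H \leq \alpha\, g^\top H^{-1}g,
\]
i.e.\ the claimed $(1\pm\alpha)$ bound. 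No substantive obstacle is expected; the only mild care is in overestimating $\sqrt{\mathrm{cond}(H)}$ by $\beta$ inside the CG contraction factor and in the log manipulation that extracts the linear dependence on $\beta$.
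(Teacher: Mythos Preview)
Your proposal is correct. The paper itself offers no proof of this lemma beyond the citation to \cite[Theorem~5.5]{Wright-2006-Numerical}; the ``Moreover'' inequalities are stated without justification, presumably as immediate corollaries. Your write-up supplies exactly the missing details: the CG contraction bound from the cited theorem, the monotonicity argument to replace $\sqrt{\mathrm{cond}(H)}$ by $\beta$ in the contraction ratio, the log manipulation yielding the $O(\beta\log(\alpha^{-1}))$ iteration count, and the triangle/Cauchy--Schwarz derivations of the two-sided bounds on $\|h\|_H$ and $h^\top g$. There is nothing to compare against in the paper, and no gap in your argument.
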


\begin{lemma}
\label{lem:condition-numbers-of-two-points-near-optimal}
Let $f$ be $\kappa$-self-concordant with global minimizer $x_f^\star$. Let $x,y\in \dom(f)$, $R\in (0,0.5)$, and $\kappa>0$ such that $\lambda_f(x)\leq \frac{R}{\kappa}$ and $\lambda_f(y)\leq \frac{R}{\kappa}$. Then $\mathrm{cond}(\nabla^2f(x)) \leq \left(\tfrac{R^{-1}-1}{R^{-1}-2} \right)^4\mathrm{cond}(\nabla^2 f(x_f^\star))\leq \left(\tfrac{R^{-1}-1}{R^{-1}-2}\right)^8 \mathrm{cond}(\nabla^2f(y))$.
\end{lemma}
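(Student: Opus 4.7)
The plan is to reduce the claim to two well-known consequences of $\kappa$-self-concordance, applied first to $x$ and then to $y$, with $x_f^\star$ serving as an intermediate anchor point. The proof will proceed by (i) bounding the distance to $x_f^\star$ in the local norm, (ii) converting that distance bound into a Hessian sandwich, and (iii) composing the two resulting condition-number inequalities across $x_f^\star$.

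First, I would control $\|x - x_f^\star\|_x$ via the Newton decrement. A standard consequence of self-concordance (see \cite[Theorem~5.2.1]{Nesterov-2018-Lectures}, applied after the customary rescaling $g = \kappa^2 f$ so that $g$ is in standard form with $\lambda_g(x) = \kappa\lambda_f(x)$) is that whenever $\kappa\lambda_f(x) < 1$,
\begin{equation*}
\kappa\|x - x_f^\star\|_x \;\leq\; \frac{\kappa\lambda_f(x)}{1 - \kappa\lambda_f(x)} \;\leq\; \frac{R}{1-R}.
\end{equation*}
The same bound holds at $y$. Because $R \in (0,0.5)$, the quantity $r := R/(1-R)$ is strictly less than $1$, placing $x_f^\star$ inside the Dikin ellipsoid at both $x$ and $y$.

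Second, I would invoke the Dikin-type Hessian stability inequality (cf.\ \cite[Theorem~5.1.7]{Nesterov-2018-Lectures}): whenever $\kappa\|u - v\|_v < 1$,
\begin{equation*}
(1 - \kappa\|u-v\|_v)^2 \,\nabla^2 f(v) \;\preceq\; \nabla^2 f(u) \;\preceq\; (1 - \kappa\|u-v\|_v)^{-2}\,\nabla^2 f(v).
\end{equation*}
Applying this with $(u,v) = (x_f^\star, x)$ gives $(1-r)^2\nabla^2 f(x) \preceq \nabla^2 f(x_f^\star) \preceq (1-r)^{-2}\nabla^2 f(x)$. Combining the upper bound on $\lambda_{\max}(\nabla^2 f(x))$ with the lower bound on $\lambda_{\min}(\nabla^2 f(x))$ obtained from this sandwich yields
\begin{equation*}
\mathrm{cond}(\nabla^2 f(x)) \;\leq\; (1-r)^{-4}\,\mathrm{cond}(\nabla^2 f(x_f^\star)).
\end{equation*}

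Third, plugging in $1 - r \geq (1-2R)/(1-R)$ gives $(1-r)^{-4} \leq \bigl((R^{-1}-1)/(R^{-1}-2)\bigr)^{4}$, which is the first inequality of the lemma. The same chain of reasoning applied with $y$ in place of $x$ provides the symmetric bound $\mathrm{cond}(\nabla^2 f(x_f^\star)) \leq \bigl((R^{-1}-1)/(R^{-1}-2)\bigr)^{4}\mathrm{cond}(\nabla^2 f(y))$; multiplying the two estimates yields the second inequality and completes the argument.

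I do not anticipate any serious obstacle: the argument is a direct composition of classical self-concordance tools. The only bookkeeping requiring care is the rescaling $g = \kappa^2 f$, which ensures that the hypothesis $\kappa\lambda_f(x) \leq R$ lines up with the standard-form estimates in \cite{Nesterov-2018-Lectures} and that the resulting constants collapse precisely to the factor $(R^{-1}-1)/(R^{-1}-2)$ advertised in the lemma.
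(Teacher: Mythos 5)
Your proof is correct and follows essentially the same route as the paper's: both control $\kappa\|x-x_f^\star\|_x$ by $R/(1-R)$ via \cite[Theorem~5.2.1]{Nesterov-2018-Lectures}, then apply the Dikin-ellipsoid Hessian sandwich \cite[Theorem~5.1.7]{Nesterov-2018-Lectures} to get $\mathrm{cond}(\nabla^2 f(x)) \le (1-r)^{-4}\,\mathrm{cond}(\nabla^2 f(x_f^\star))$ with $1-r = (1-2R)/(1-R)$, and finish by the symmetric estimate at $y$. The only presentational difference is that you make the two-theorem composition and the rescaling $g = \kappa^2 f$ explicit, whereas the paper states the intermediate sandwich directly; the content is identical.
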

\begin{proof}{Proof}
We know from \cite[Theorem 5.1.13]{Nesterov-2018-Lectures} that $x_f^\star$ exists. It follows from \cite[Theorems 5.1.7 5.2.1]{Nesterov-2018-Lectures} that $(1-\kappa r)^2\nabla^2f(x) \preceq \nabla^2 f(x_{f}^\star) \preceq \frac{1}{(1-\kappa r)^2} \nabla^2 f(x)$, where $r=\|x-x_{f}^\star\|_{\nabla^2f(x)}\leq \frac{1}{(R^{-1}-1)\kappa}$. The desired result follows from direct calculation. \Halmos
\end{proof}

We are ready to analyze Algorithm~\ref{algorithm:Newton-CG}. Convergence of the algorithm consists of two stages. In the first stage, it converges to a neighborhood of the global minimizer (see Lemma~\ref{lem:newton-cg-one-step-bounds}). In the second stage, it converges $Q$-linearly to the global minimizer (see Lemma~\ref{lem:newton-cg-one-step-bounds-local}).

\begin{lemma}
\label{lem:newton-cg-one-step-bounds}
Suppose $f$ is $\kappa$-self-concordant. At any given iteration of Algorithm~\ref{algorithm:Newton-CG}, if $\beta_k^2 \geq \mathrm{cond}(\nabla^2 f(x_k))$ and $\rho_k > \frac{R_1^2}{\kappa^2}$, then \(f(x_k) - f(x_{k+1}) \geq \kappa^{-2} \, \omega\left(\tfrac{\sqrt{1 - \alpha_\star}}{1 + \alpha_\star} \, \kappa \sqrt{\rho_k} \right)\geq \kappa^{-2}\omega(R_3)\) and $\sqrt{\tfrac{\mathrm{cond}(\nabla^2 f(x_{k+1}))}{\mathrm{cond}(\nabla^2 f(x_k))}} \leq B_k\leq \exp\left(C_1 \kappa^2 (f(x_k) - f(x_{k+1}))\right)$.
\end{lemma}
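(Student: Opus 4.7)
The plan is to view one Newton--CG step as an instance of the damped step analyzed in Proposition~\ref{prop:descent} (with $F\equiv 0$), then combine the descent it produces with the Nesterov affine invariance bound on the Hessian to control the change in the condition number. Throughout, the inexactness of the CG direction will be absorbed by the two-sided estimates provided by Lemma~\ref{lem:cg-apx}.

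\textbf{Step 1: Descent via Proposition~\ref{prop:descent}.} Set $d=-h_k$ and regard $f$ as $0$-based $\kappa$-self-concordant. The hypothesis $\rho_k>R_1^2/\kappa^2>0$ gives $\nabla f(x_k)^\top d=-\rho_k\le 0$, and the proposition's quantities become $\rho=\rho_k$, $\delta=\delta_k$, $\Delta=0$, and $\eta=\rho_k/\sqrt{\delta_k}$. The minimizer $\bar t=\rho_k/(\delta_k+\kappa\rho_k\sqrt{\delta_k})$ of the right-hand side of Eq.~\eqref{eq:descent-convex} is exactly the step $t_k$ used by the algorithm, and the condition $\kappa\bar t\sqrt{\delta_k}<1$ is immediate. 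Thus Eq.~\eqref{eq:descent-convex-optimal} yields $x_{k+1}\in\dom(f)$ and
\begin{equation*}
f(x_k)-f(x_{k+1})\ \geq\ \kappa^{-2}\,\omega\!\bigl(\kappa\rho_k/\sqrt{\delta_k}\bigr).
\end{equation*}

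\textbf{Step 2: Passing from $\rho_k/\sqrt{\delta_k}$ to $\sqrt{\rho_k}$.} Since $\beta_k\ge\sqrt{\mathrm{cond}(\nabla^2 f(x_k))}$, Lemma~\ref{lem:cg-apx} (invoked with $\alpha=\alpha_\star$) gives the sandwich
\begin{equation*}
(1-\alpha_\star)\,\lambda_f(x_k)^2\ \le\ \rho_k\ \le\ (1+\alpha_\star)\,\lambda_f(x_k)^2,\qquad (1-\alpha_\star)^2\lambda_f(x_k)^2\ \le\ \delta_k\ \le\ (1+\alpha_\star)^2\lambda_f(x_k)^2,
\end{equation*}
which in turn gives $\tfrac{1-\alpha_\star}{(1+\alpha_\star)^2}\le \rho_k/\delta_k\le \tfrac{1+\alpha_\star}{(1-\alpha_\star)^2}$. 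The lower bound rewrites as $\kappa\rho_k/\sqrt{\delta_k}\ge \tfrac{\sqrt{1-\alpha_\star}}{1+\alpha_\star}\kappa\sqrt{\rho_k}>R_3$, and by monotonicity of $\omega$ on $[0,\infty)$ this delivers the first claim of the lemma.

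\textbf{Step 3: Condition-number ratio.} Apply \cite[Theorem 5.1.7]{Nesterov-2018-Lectures} to $x_k$ and $x_{k+1}=x_k-t_k h_k$ with $r:=t_k\|h_k\|_{\nabla^2 f(x_k)}=t_k\sqrt{\delta_k}$. A direct computation gives $\kappa r=\kappa\eta/(1+\kappa\eta)$, so $1-\kappa r=(1+\kappa\eta)^{-1}$ and
\begin{equation*}
\sqrt{\tfrac{\mathrm{cond}(\nabla^2 f(x_{k+1}))}{\mathrm{cond}(\nabla^2 f(x_k))}}\ \leq\ (1-\kappa r)^{-2}\ =\ (1+\kappa\eta)^2.
\end{equation*}
The upper half of the Step~2 sandwich yields $\kappa\eta\le\tfrac{\sqrt{1+\alpha_\star}}{1-\alpha_\star}\kappa\sqrt{\rho_k}$, hence $(1+\kappa\eta)^2\le B_k$, matching the middle bound of the second claim.

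\textbf{Step 4: From $B_k$ to the exponential bound.} It remains to show $\log B_k\le C_1\kappa^2(f(x_k)-f(x_{k+1}))$. Writing $s=\kappa\sqrt{\rho_k}$, $a=\tfrac{\sqrt{1-\alpha_\star}}{1+\alpha_\star}$, and $b=\tfrac{\sqrt{1+\alpha_\star}}{1-\alpha_\star}$, Steps~1--2 reduce the target to the scalar inequality
\begin{equation*}
2\log(1+bs)\ \le\ C_1\,\omega(as)\qquad\text{for all } s\ge R_1.
\end{equation*}
This is the main technical obstacle and is precisely what fixes the numerical value of $C_1$: the ratio $2\log(1+bs)/\omega(as)$ blows up as $s\downarrow 0$ (because $\omega$ is quadratic near $0$), but for $s\ge R_1$ it is decreasing and attains its maximum at the endpoint $s=R_1$, where with $\alpha_\star=10^{-4}$ and $R_1=0.49$ the value is slightly below $9=C_1$. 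Thus the inequality holds on $[R_1,\infty)$ and completes the proof; the role of $R_1$ and $C_1$ is exactly to make this one-variable bound tight.
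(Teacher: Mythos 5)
Your proof is correct and follows essentially the same route as the paper's: both instantiate Proposition~\ref{prop:descent} with $F\equiv 0$ to obtain the descent $\kappa^{-2}\omega(\kappa\rho_k/\sqrt{\delta_k})$, sandwich $\rho_k/\delta_k$ via Lemma~\ref{lem:cg-apx}, identify $r_k=t_k\sqrt{\delta_k}$ and apply Nesterov's Theorem 5.1.7 to bound the condition-number ratio by $(1-\kappa r_k)^{-2}\le B_k$, and finally reduce $\log B_k\le C_1\kappa^2(f(x_k)-f(x_{k+1}))$ to the monotonicity of the scalar ratio $\tfrac{2\log(1+bs)}{\omega(as)}$ on $[R_1,\infty)$ with its maximum at $s=R_1$ slightly below $C_1=9$. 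The only cosmetic difference is that you spell out the sandwich in terms of $\lambda_f(x_k)^2$ and the identity $\kappa r_k=\kappa\eta/(1+\kappa\eta)$, which the paper leaves implicit.
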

\begin{proof}{Proof}
It follows from Lemma \ref{lem:cg-apx} that ${\frac{\rho_k}{\delta_k}}\in[\frac{1-\alpha_\star}{(1+\alpha_\star)^2},\frac{1+\alpha_\star}{(1-\alpha_\star)^2}]$. Thus, Proposition~\ref{prop:descent} implies that $f(x_k)-f(x_{k+1}) \geq \kappa^{-2} \omega\left(\tfrac{\kappa \rho_k}{\sqrt{\delta_k}}\right)\geq \kappa^{-2} \omega\left(\tfrac{\sqrt{1-\alpha_\star}}{1+\alpha_\star}\kappa\sqrt{\rho_k}\right)\geq \kappa^{-2}\omega(R_3)$. Let $r_k=\|x_{k+1}-x_k\|_{\nabla^2f(x_k)}=\frac{\rho_k \sqrt{\delta_k}}{\delta_k+\kappa\rho_k \sqrt{\delta_k}}$. Hence, we have $(1-\kappa r_k)^{-1}\leq 1+\frac{\sqrt{1+\alpha_\star}}{1-\alpha_\star}\kappa\sqrt{\rho_k}$. It follows from \cite[Theorem 5.1.7]{Nesterov-2018-Lectures} that $\sqrt{\tfrac{\mathrm{cond}(\nabla^2 f(x_{k+1}))}{\mathrm{cond}(\nabla^2 f(x_k))}} \leq (1-\kappa r_k)^{-2} \leq B_k$. Moreover, we have
$\tfrac{\log B_k}{\kappa^2 (f(x_{k})-f(x_{k+1}))} \leq \tfrac{2\log \left(1 + \frac{\sqrt{1 + \alpha_\star}}{1 - \alpha_\star} \, \kappa \sqrt{\rho_k} \right)}{\omega\left(\tfrac{\sqrt{1-\alpha_\star}}{1+\alpha_\star}\kappa\sqrt{\rho_k}\right)}\leq\tfrac{2\log \left(1 + \frac{\sqrt{1 + \alpha_\star}}{1 - \alpha_\star} R_1\right)}{\omega\left(\tfrac{\sqrt{1-\alpha_\star}}{1+\alpha_\star}R_1\right)} \leq C_1$,
where the second inequality holds by the monotonicity of $\frac{\log\left(1+\frac{\sqrt{1+\alpha_\star}}{1-\alpha_\star}t\right)}{\omega\left(\frac{\sqrt{1-\alpha_\star}}{1+\alpha_\star}t\right)}$ for $t>R_1$. This completes the proof. \Halmos
\end{proof}

\begin{lemma}
\label{lem:newton-cg-one-step-bounds-local}
Suppose $f$ is $\kappa$-self-concordant. At any given iteration of Algorithm~\ref{algorithm:Newton-CG}, if $\beta_k^2 \geq \mathrm{cond}\allowbreak(\nabla^2 f(x_k))$ and $\rho_k \leq \frac{R_1^2}{\kappa^2}$, then $(1 + \alpha_\star)\lambda_f(x_{k+1})^2 \leq C_2 \rho_k$.
\end{lemma}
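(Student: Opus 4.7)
The plan is to unfold the definition of $\lambda_f(x_{k+1})^2=\|\nabla f(x_{k+1})\|_{\nabla^2 f(x_{k+1})^{-1}}^2$ and bound it via the classical self-concordant Taylor inequality together with Lemma~\ref{lem:cg-apx}. First I would establish the basic size controls: set $H_k=\nabla^2 f(x_k)$, $\lambda_k=\lambda_f(x_k)$, and $\eta_k=\rho_k/\sqrt{\delta_k}$. Lemma~\ref{lem:cg-apx} (applied with $\alpha=\alpha_\star$) gives $(1-\alpha_\star)\lambda_k^2\le\rho_k\le(1+\alpha_\star)\lambda_k^2$ and $(1-\alpha_\star)^2\lambda_k^2\le\delta_k\le(1+\alpha_\star)^2\lambda_k^2$, so $\eta_k\in\bigl[\tfrac{1-\alpha_\star}{1+\alpha_\star}\lambda_k,\tfrac{1+\alpha_\star}{1-\alpha_\star}\lambda_k\bigr]$, and in particular the hypothesis $\rho_k\le R_1^2/\kappa^2$ forces $\kappa\lambda_k\le R_2$ and $\kappa\eta_k\le \tfrac{1+\alpha_\star}{1-\alpha_\star}R_2$.

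Next I would compute the step length in the Hessian norm, $r_k:=t_k\sqrt{\delta_k}=\frac{\eta_k}{1+2\kappa\eta_k}$, so that $\kappa r_k<\tfrac{1}{2}$ and $1-\kappa r_k=\frac{1+\kappa\eta_k}{1+2\kappa\eta_k}$. This places $x_{k+1}=x_k-t_k h_k$ well inside the Dikin ellipsoid at $x_k$, letting me invoke Nesterov's self-concordant Taylor bound (\cite[Thm.~5.1.8]{Nesterov-2018-Lectures}) to write $\nabla f(x_{k+1})=g_k-t_kH_kh_k+E_k$ with $\|E_k\|_{H_k^{-1}}\le\frac{\kappa r_k^2}{1-\kappa r_k}$, and the Hessian-shift bound $H_{k+1}^{-1}\preceq(1-\kappa r_k)^{-2}H_k^{-1}$. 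A short algebraic calculation then yields
\begin{equation*}
\|g_k-t_kH_kh_k\|_{H_k^{-1}}^2=\lambda_k^2-2t_k\rho_k+t_k^2\delta_k=\lambda_k^2-\eta_k^2\,\tfrac{1+4\kappa\eta_k}{(1+2\kappa\eta_k)^2},
\end{equation*}
so assembling the triangle inequality and the Hessian shift gives
\begin{equation*}
\lambda_f(x_{k+1})\le\tfrac{1}{1-\kappa r_k}\left(\sqrt{\lambda_k^2-\eta_k^2\tfrac{1+4\kappa\eta_k}{(1+2\kappa\eta_k)^2}}+\tfrac{\kappa r_k^2}{1-\kappa r_k}\right).
\end{equation*}

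The final step is to square this and reduce everything to the single scalar $\kappa\lambda_k\in[0,R_2]$, using the two-sided CG bounds to compare $\eta_k$ with $\lambda_k$ and $\rho_k$ with $\lambda_k^2$. The resulting expression is a rational function of $\kappa\lambda_k$ and $\alpha_\star$; monotonicity in $\kappa\lambda_k$ reduces the verification of
\begin{equation*}
(1+\alpha_\star)\lambda_f(x_{k+1})^2\le C_2\rho_k
\end{equation*}
to checking a single numerical inequality at the worst case $\kappa\lambda_k=R_2$. The main obstacle is precisely this bookkeeping: simultaneously tracking the two independent error sources (CG relative error $\alpha_\star$ and the self-concordant linearization remainder), and then verifying that the universal constants $R_1=0.49$, $\alpha_\star=10^{-4}$, and $C_2=0.95$ are compatible---the calibration of $R_1$ and the factor $2$ appearing in $t_k=\rho_k/(\delta_k+2\kappa\rho_k\sqrt{\delta_k})$ is designed precisely to absorb the CG slack while keeping $C_2<1$, so this last numerical check is what the proof ultimately rests on.
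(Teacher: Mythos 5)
Your route is structurally sound but genuinely different from the paper's. You decompose $\nabla f(x_{k+1}) = (g_k - t_k H_k h_k) + E_k$ via the self-concordant gradient Taylor bound and then apply the triangle inequality in $\|\cdot\|_{H_k^{-1}}$. The paper instead writes $\nabla f(x_{k+1}) = g_k - t_k G_k h_k$ with the \emph{averaged} Hessian $G_k = \int_0^1 \nabla^2 f(x_k + \tau(x_{k+1}-x_k))\,d\tau$, expands the squared norm $\|g_k - t_k G_k h_k\|_{H_k^{-1}}^2$ exactly into four terms (using $H_k^{-1}g_k = h_k + (H_k^{-1}g_k - h_k)$ so the CG error appears only in one cross term), bounds that cross term by Young's inequality with parameter $\alpha_\star$, and bounds the other terms via Nesterov's two-sided spectral estimate $(1-t\zeta+\tfrac{1}{3}t^2\zeta^2)H_k \preceq G_k \preceq (1-t\zeta)^{-1}H_k$. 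This yields a bound of the form $\lambda_{k+1}^2 \le \lambda_k^2\, g(t_k,\zeta_k)$ with $g$ an explicit rational function, so the final numerical check is on $g$. Your version keeps a square-root term, so squaring gives a non-rational expression, and the bound you reach is likely less tight than the paper's (triangle inequality discards favorable cross-sign cancellation), though a rough check suggests it would still clear $C_2 = 0.95$ with room to spare.

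Two caveats you should not skip. First, your quantity under the square root, $\lambda_k^2 - \eta_k^2\tfrac{1+4\kappa\eta_k}{(1+2\kappa\eta_k)^2}$, equals $\|g_k - t_k H_k h_k\|_{H_k^{-1}}^2 \ge 0$ only because of the Cauchy--Schwarz constraint $\rho_k \le \lambda_k\sqrt{\delta_k}$ (i.e., $\eta_k \le \lambda_k$), which is genuinely needed; your stated two-sided range $\eta_k \in [\tfrac{1-\alpha_\star}{1+\alpha_\star}\lambda_k, \tfrac{1+\alpha_\star}{1-\alpha_\star}\lambda_k]$ from Lemma~\ref{lem:cg-apx} alone is too loose to keep the radicand nonnegative as $\kappa\lambda_k \to 0$. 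Second, your expression is not a function of the single scalar $\kappa\lambda_k$ even after applying the CG bounds --- $\eta_k$, $\rho_k$, and $\lambda_k$ still vary (quasi-)independently within thin intervals, and the dependence on $\eta_k$ is not obviously monotone (the radicand decreases in $\eta_k$ while the remainder and prefactor increase). You therefore need to maximize over these ranges before reducing to a one-dimensional check; the paper handles the analogous issue by tabulating over 1000 subintervals of $\zeta_k \in [0,\tfrac{1+\alpha_\star}{\sqrt{1-\alpha_\star}}R_1]$ rather than invoking monotonicity. With those two points addressed, your proof would go through.
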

\begin{proof}{Proof}
If $\rho_k \leq \frac{R_1^2}{\kappa^2}$, then we have $t_k = \frac{\rho_k}{\delta_k+2\kappa\rho_k\sqrt{\delta_k}}$. Letting $\lambda_k = \lambda_f(x_k)$, $H_k=\nabla^2f(x_k)$, $G_k = \int_0^1 \nabla^2f(x_k+ \tau (x_{k+1}-x_k)) d\tau$, $\zeta_k  = \kappa\sqrt{\delta_k}$, we have $\nabla f(x_{k+1}) = G_k (x_{k+1}-x_k) +\nabla f(x_k) = t_kG_kh_k + g_k$. It follows from \cite[Corollary 5.1.5]{Nesterov-2018-Lectures} that $(1-t_k\zeta_k+\frac{1}{3}t_k^2\zeta_k^2) H_k\preceq G_k \preceq \frac{1}{1-t_k\zeta_k} H_k$. Moreover, \cite[Lemma 5.1.7]{Nesterov-2018-Lectures} implies that $H_{k+1}^{-1} \preceq \frac{1}{(1-t_k\zeta_k)^2} H_k^{-1}$. Thus, we have $\lambda_f(x_{k+1})^2\leq\tfrac{1}{(1-t_k\zeta_k)^2} \|\nabla f(x_{k+1})\|_{H_{k}^{-1}}^2$, where
$$\|\nabla f(x_{k+1})\|_{H_{k}^{-1}}^2=\lambda_k^2 - 2t_k h_k^\top G_kh_k  + t_k^2 h_k^\top G_k H_{k}^{-1} G_k h_k + 2t_k h_k^\top G_k (H_{k}^{-1}g_k+h_k).$$ Using the Cauchy–Schwarz inequality and Lemma~\ref{lem:cg-apx}, we have
$$|2h_k^\top G_k (H_{k}^{-1}g_k+h_k)|\leq \alpha_\star \|G_k h_k\|_{H_k^{-1}}^2 + \alpha_\star^{-1}\|H_k^{-1}g_k+h_k\|_{H_k}^2.$$
Thus, we have $\lambda_{k+1}^2\leq \lambda_k^2g(t_{k},\zeta_k)$, where $$g(t,\zeta)=\tfrac{1-2t(1-\alpha_\star)^2(1-t\zeta+\frac{1}{3}t^2\zeta^2)}{(1-t\zeta)^2} + \tfrac{t^2 (1+\alpha_\star)^2}{(1-t\zeta)^4} + \tfrac{\alpha_\star t}{{(1-t\zeta)^2}}\left(\tfrac{(1+\alpha_\star)^2}{(1-t\zeta)^2} + 1\right).$$

Lemma~\ref{lem:cg-apx} implies $\zeta_k\in[0,\frac{1+\alpha_\star}{\sqrt{1-\alpha_\star}}R_1]$. For any $\zeta_l<\zeta_u$, we define $t_u = \frac{1}{\frac{(1-\alpha_\star)^2}{1+\alpha_\star}+3\zeta_l}$ and $t_l = \frac{1}{\frac{(1+\alpha_\star)^2}{1-\alpha_\star}+3\zeta_u}$. Since $t_k=\frac{1}{\frac{\delta_k}{\rho_k}+3\zeta_k}$ and $\frac{\delta_k}{\rho_k} \in [\frac{(1-\alpha_\star)^2}{1+\alpha_\star},\frac{(1+\alpha_\star)^2}{1-\alpha_\star}]$, we have
$g(t_k,\zeta_k) \leq \tfrac{1-2t_l(1-\alpha_\star)^2(1-t_u\zeta_u)}{(1-t_l\zeta_l)^2} + \tfrac{t_u^2 (1+\alpha_\star)^2}{(1-t_u\zeta_u)^4} + \tfrac{\alpha_\star t_u}{{(1-t_u\zeta_u)^2}}\left(\tfrac{(1+\alpha_\star)^2}{(1-t_u\zeta_u)^2} + 1\right)$
for any $\zeta_k\in[\zeta_l,\zeta_u]$. Dividing $[0,\frac{1+\alpha_\star}{\sqrt{1-\alpha_\star}}R_1]$ into $1000$ intervals equally, we can verify numerically that $g(t_k,\zeta_k)\leq0.945$. The desired result follows from Lemma~\ref{lem:cg-apx}. \Halmos
\end{proof}

In Algorithm~\ref{algorithm:Newton-CG}, we define $k^\star = \min\{k:\rho_k\leq R_1^2\kappa^{-2}\}$ and the events
\begin{align*}
A_1 &= \left\{ \mathrm{cond}(\nabla^2f(x_0))\leq \beta_0^2\leq  4\mathrm{cond}(\nabla^2f(x_0)) \right\},\\
A_2 &= \left\{\mathrm{cond}(\nabla^2f(x_{k^\star}))\leq (\beta^\star)^2 \leq  4\mathrm{cond}(\nabla^2f(x_{k^\star})) \right\}, \\
A_3 &= \left\{ \beta_k^2 \geq \mathrm{cond}(\nabla^2f(x_k)), \ \forall k\geq 0 \right\}.
\end{align*}
Lemma~\ref{lem:all-condition-numbers-are-bounded-by-beta} shows that $A_1\cap A_2\cap A_3$ happens with high probability. Hence, using two calls of sqrt-cond, Algorithm~\ref{algorithm:Newton-CG} gets good estimates of Hessian condition numbers. If $A_1\cap A_2\cap A_3$ holds, then for all $0\leq k\leq k^\star$, using Lemma~\ref{lem:newton-cg-one-step-bounds}, we have
\begin{equation}
\label{eq:bound-beta-k-global}
    \beta_k=\beta_0 B_0B_1\cdots B_{k-1}\leq 2\sqrt{\mathrm{cond}(\nabla^2f(x_0))}\exp(C_1\kappa^2(f(x_0)-f(x_{k}))).
\end{equation}
Moreover, for all $k\geq k^\star+1$, by Lemma~\ref{lem:condition-numbers-of-two-points-near-optimal}, we have
\begin{equation}
\label{eq:bound-beta-k-local}
    \beta_k =C_3^4 \beta^\star\leq 2C_3^4 \sqrt{\mathrm{cond}(\nabla^2f(x_{k^\star}))}\leq 2C_3^6\sqrt{\mathrm{cond}(\nabla^2f(\arg\min f))}.
\end{equation}

\begin{lemma}
\label{lem:all-condition-numbers-are-bounded-by-beta}
Suppose that $f$ is $\kappa$-self-concordant and $\beta^2 \geq \mathrm{cond}(\nabla^2f(x_0))$, then $\mathbb{P}\left( A_1 \cap A_2 \cap A_3 \right) \geq 1 - \delta$.
\end{lemma}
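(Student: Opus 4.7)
The plan is to decouple the randomness, which lives entirely in the two Lanczos calls producing $\beta_0$ and $\beta^\star$, from the deterministic propagation of Hessian-condition bounds along the trajectory. Concretely I would first show the deterministic implication $A_1 \cap A_2 \subseteq A_3$, and then bound $\mathbb{P}(A_1 \cap A_2) \geq 1-\delta$ by applying Lemma~\ref{lem:lanczos} twice, chained by conditioning on the outcome of the first call.

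For the probability part, the hypothesis $\beta^2\geq\mathrm{cond}(\nabla^2 f(x_0))$ is exactly the precondition of Lemma~\ref{lem:lanczos} for the call sqrt-cond$(\nabla^2 f(x_0),0.5\delta,\beta)$, yielding $\mathbb{P}(A_1)\geq 1-0.5\delta$. Let $k^\star := \min\{k:\rho_k\leq R_1^2/\kappa^2\}$. On the event $A_1$, I would prove $\beta_k^2\geq \mathrm{cond}(\nabla^2 f(x_k))$ for all $0\leq k\leq k^\star$ by induction on $k$: the base case is $A_1$ itself, and the inductive step combines the bound $\sqrt{\mathrm{cond}(\nabla^2 f(x_{k+1}))/\mathrm{cond}(\nabla^2 f(x_k))}\leq B_k$ from Lemma~\ref{lem:newton-cg-one-step-bounds} with the update $\beta_{k+1}=B_k\beta_k$ of the global branch. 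In particular $\beta_{k^\star}^2\geq\mathrm{cond}(\nabla^2 f(x_{k^\star}))$, so the second call sqrt-cond$(\nabla^2 f(x_{k^\star}),0.5\delta,\beta_{k^\star})$ also satisfies its precondition; since the Lanczos randomness of the two calls is independent, conditioning on $A_1$ and applying Lemma~\ref{lem:lanczos} again gives $\mathbb{P}(A_2\mid A_1)\geq 1-0.5\delta$, hence $\mathbb{P}(A_1\cap A_2)\geq 1-\delta$ by the multiplication rule.

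To close the argument I would establish the remaining part of $A_3$, namely $\beta_k^2\geq\mathrm{cond}(\nabla^2 f(x_k))$ for $k>k^\star$, by a second induction with the joint hypothesis ``$\rho_k\leq R_1^2/\kappa^2$ and $\beta_k^2\geq\mathrm{cond}(\nabla^2 f(x_k))$'', base case $k=k^\star$. For the inductive step, Lemma~\ref{lem:newton-cg-one-step-bounds-local} gives $(1+\alpha_\star)\lambda_f(x_{k+1})^2\leq C_2\rho_k$, which combined with $\rho_{k+1}\leq(1+\alpha_\star)\lambda_f(x_{k+1})^2$ from Lemma~\ref{lem:cg-apx} yields $\rho_{k+1}\leq C_2\rho_k\leq C_2 R_1^2/\kappa^2<R_1^2/\kappa^2$, preserving the first conjunct and keeping the algorithm in the local branch. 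For the second conjunct the same inequalities give $\lambda_f(x_k)^2\leq\rho_k/(1-\alpha_\star)\leq R_2^2/\kappa^2$ and $\lambda_f(x_{k+1})\leq R_2/\kappa$, so Lemma~\ref{lem:condition-numbers-of-two-points-near-optimal} with $R=R_2<1/2$ applied at $x_{k+1}$ and $x_{k^\star}$ produces $\mathrm{cond}(\nabla^2 f(x_{k+1}))\leq C_3^8\,\mathrm{cond}(\nabla^2 f(x_{k^\star}))$; the algorithm freezes $\beta_{k+1}=C_3^4\beta^\star$ throughout the local regime, and on $A_2$ we have $(\beta^\star)^2\geq\mathrm{cond}(\nabla^2 f(x_{k^\star}))$, so $\beta_{k+1}^2=C_3^8(\beta^\star)^2\geq\mathrm{cond}(\nabla^2 f(x_{k+1}))$, closing the induction.

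The main subtlety I anticipate is the \emph{absorbing} nature of the local regime: once the algorithm freezes $\beta_k=\beta_{\mathrm{local}}$, it would be disastrous if a later iteration had $\rho_k>R_1^2/\kappa^2$ and reverted to the global branch, because then the frozen $\beta_k$ could fall below $\sqrt{\mathrm{cond}(\nabla^2 f(x_k))}$ and both the induction and the validity of the subsequent CG calls would collapse. The contraction $\rho_{k+1}\leq C_2\rho_k$ with $C_2<1$, which comes essentially for free from Lemmas~\ref{lem:newton-cg-one-step-bounds-local} and~\ref{lem:cg-apx}, is what rules out this pathology and is the linchpin that makes the two-phase analysis actually close.
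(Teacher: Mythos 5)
Your proposal is correct and takes essentially the same route as the paper: bound $\mathbb{P}(A_1)$ and $\mathbb{P}(A_2 \mid A_1)$ via Lemma~\ref{lem:lanczos}, and establish $A_1 \cap A_2 \subseteq A_3$ by a two-phase induction using Lemmas~\ref{lem:newton-cg-one-step-bounds}, \ref{lem:newton-cg-one-step-bounds-local}, and~\ref{lem:condition-numbers-of-two-points-near-optimal}. The one small wrinkle is the order within your local-regime inductive step: you invoke Lemma~\ref{lem:cg-apx} at iteration $k+1$ (to get $\rho_{k+1} \leq (1+\alpha_\star)\lambda_f(x_{k+1})^2$) before you have established $\beta_{k+1}^2 \geq \mathrm{cond}(\nabla^2 f(x_{k+1}))$, so the two conjuncts should be proved in the opposite order — first the condition-number bound at $k+1$ (which needs only the inductive hypothesis at $k$, Lemma~\ref{lem:newton-cg-one-step-bounds-local}, Lemma~\ref{lem:condition-numbers-of-two-points-near-optimal}, and $A_2$), and only then the bound $\rho_{k+1} \leq C_2\rho_k$; once reordered, the argument closes exactly as in the paper.
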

\begin{proof}{Proof}
By Lemma \ref{lem:lanczos}, we have $\mathbb{P}\left(A_1\right)\geq 1-\frac{1}{2}\delta$. Therefore, it suffices to show $\mathbb{P}\left(A_2|A_1\right)\geq 1-\frac{1}{2}\delta$ and $A_1\cap A_2 \subseteq A_3$. To show $\mathbb{P}\left(A_2|A_1\right)\geq 1-\frac{1}{2}\delta$, we observe from Lemma~\ref{lem:newton-cg-one-step-bounds} that $\beta_0^2\geq \mathrm{cond}(\nabla^2f(x_0))$ implies $\beta_{k^\star}^2\geq \mathrm{cond}(\nabla^2f(x_{k^\star}))$. Thus, Lemma \ref{lem:lanczos} implies $\mathbb{P}\left(A_2|A_1\right)\geq 1-\frac{1}{2}\delta$. It remains to show $A_1\cap A_2 \subseteq A_3$. Suppose that $A_1\cap A_2$ holds, then we have $\mathrm{cond}(\nabla^2 f(x_{l}))\leq\beta_l^2$ for $0\leq l\leq k^\star$ by Lemma~\ref{lem:newton-cg-one-step-bounds}. Moreover, Lemma~\ref{lem:newton-cg-one-step-bounds-local} yields $\rho_l \leq \frac{R_1^2}{\kappa^2}$ for all $l\geq k^\star+1$. Therefore, using Lemma~\ref{lem:condition-numbers-of-two-points-near-optimal}, we have $\mathrm{cond}(\nabla^2 f(x_{l}))\leq C_3^8\mathrm{cond}(\nabla^2 f(x_{k^\star})) \leq C_3^8(\beta^\star)^2 = \beta_{\mathrm{local}}^2 = \beta_l^2$.  \Halmos
\end{proof}

We present the convergence theorem for Algorithm~\ref{algorithm:Newton-CG}.

\begin{theorem}
\label{thm:convex-SC-newton-CG}
Suppose that $f$ is $\kappa$-self-concordant, bounded below, and that $\beta^2 \geq \mathrm{cond}(\nabla^2 f(x_0))$. Then, with probability at least $1 - \delta$, the following statements hold:
1. Algorithm~\ref{algorithm:Newton-CG} terminates and returns a point $y$. If $y \neq x_0$, then $\lambda_f(y) \leq \epsilon_1$ and $\lambda_f(x_{0}) \geq \sqrt{\frac{1-\alpha_\star}{1+\alpha_\star}} \,\epsilon_0$; if $y = x_0$, then $\lambda_f(x_{0}) \leq \epsilon_0$. Let $\Delta_f = \kappa^2(f(x_0) - f(y))$, $K_1 = \frac{\Delta_f}{\omega(R_3)}$, and $K_2 = 2 \log_{C_2^{-1}}\!\left(\frac{R_2}{\kappa \epsilon_1}\right)$. Then Algorithm~\ref{algorithm:Newton-CG} terminates within $2 + \lfloor K_1 \rfloor + \max(\lfloor K_2 \rfloor, 0)$ iterations.
2. Let $B = \sqrt{\max\left\{{\mathrm{cond}(\nabla^2 f(x_0))}, \mathrm{cond}(\nabla^2 f(\argmin f))\right\}}$, $\Lambda_1 = C_{\mathrm{CG}}\min\left\{n,2Be^{C_1 \Delta_f} \log(\alpha_\star^{-1}) \right\}$, $\Lambda_2 = C_{\mathrm{CG}}\min \left\{ n, 2 B C_3^6 \log(\alpha_\star^{-1})\right\}$, and $\Lambda_3 = C_L \min\left\{ n, \beta \log(n\delta^{-2}) \right\} + C_L\min\left\{ n, 2B e^{C_1 \Delta_f} \log(n\delta^{-2}) \right\}$. The total number of HVP computations performed by Algorithm~\ref{algorithm:Newton-CG} is at most $(\Lambda_1+1)(K_1+1) + (\Lambda_2+1)(K_2+1) + \Lambda_3 = O(\log(\epsilon_1^{-1}) + \log(\delta^{-1}))$.
\end{theorem}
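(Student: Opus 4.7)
The plan is to first condition on the event $A_1 \cap A_2 \cap A_3$, which has probability at least $1-\delta$ by Lemma~\ref{lem:all-condition-numbers-are-bounded-by-beta}. Under this event, $\beta_k^2 \geq \mathrm{cond}(\nabla^2 f(x_k))$ at every iteration, so the guarantees of CG-Inverse from Lemma~\ref{lem:cg-apx} hold throughout; in particular the two-sided sandwich $(1-\alpha_\star)\lambda_f(x_k)^2 \leq \rho_k \leq (1+\alpha_\star)\lambda_f(x_k)^2$ is available at every $k$. All of the remaining analysis is deterministic on top of this event, so once it is in force the claim becomes a counting argument.

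I would then split the trajectory at the stopping index $k^\star = \min\{k : \rho_k \leq R_1^2/\kappa^2\}$. In the global phase $k < k^\star$, Lemma~\ref{lem:newton-cg-one-step-bounds} delivers the uniform descent $f(x_k) - f(x_{k+1}) \geq \kappa^{-2}\omega(R_3)$, and telescoping against the lower bound on $f$ yields $k^\star \leq \Delta_f/\omega(R_3) = K_1$. In the local phase $k \geq k^\star$, Lemma~\ref{lem:newton-cg-one-step-bounds-local} combined with the upper sandwich gives $\rho_{k+1} \leq (1+\alpha_\star)\lambda_f(x_{k+1})^2 \leq C_2 \rho_k$, so the iterates never leave the region $\{\rho \leq R_1^2/\kappa^2\}$ and $\rho$ contracts geometrically by the factor $C_2$. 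Solving $C_2^j R_1^2/\kappa^2 \leq (1-\alpha_\star)\epsilon_1^2$ using $R_2^2 = R_1^2/(1-\alpha_\star)$ yields $j \geq K_2$, so at most $\max(\lceil K_2 \rceil, 0)$ local iterations occur before the termination check fires. Adding the two phases bounds the total iteration count by $k^\star + \max(\lceil K_2 \rceil, 0) + 1 \leq 2 + \lfloor K_1 \rfloor + \max(\lfloor K_2 \rfloor, 0)$.

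For the output $y$ I would split on the termination branch. If $y = x_0$, the algorithm exited through $\rho_0 \leq (1-\alpha_\star)\epsilon_0^2$, and the lower sandwich $(1-\alpha_\star)\lambda_f(x_0)^2 \leq \rho_0$ gives $\lambda_f(x_0) \leq \epsilon_0$. If $y = x_k$ with $k \geq 1$, the exit came from $\rho_k \leq (1-\alpha_\star)\epsilon_1^2$, yielding $\lambda_f(y) \leq \epsilon_1$ by the same sandwich; moreover the fact that termination did \emph{not} fire at $k = 0$ forces $\rho_0 > (1-\alpha_\star)\epsilon_0^2$, so the upper sandwich $\rho_0 \leq (1+\alpha_\star)\lambda_f(x_0)^2$ yields $\lambda_f(x_0) \geq \sqrt{(1-\alpha_\star)/(1+\alpha_\star)}\,\epsilon_0$, as required.

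Finally I would bound the HVP count by summing per-iteration costs. Each iteration invokes CG-Inverse once at cost $C_{\mathrm{CG}}\min\{n, \beta_k \log(\alpha_\star^{-1})\}$ by Lemma~\ref{lem:cg-apx}. In the global phase, Eq.~\eqref{eq:bound-beta-k-global} together with the monotone decrease of $f$ gives $\beta_k \leq 2\sqrt{\mathrm{cond}(\nabla^2 f(x_0))}\exp(C_1 \Delta_f) \leq 2Be^{C_1\Delta_f}$, bounding the per-step cost by $\Lambda_1$; in the local phase, Eq.~\eqref{eq:bound-beta-k-local} gives $\beta_k \leq 2BC_3^6$, bounding the per-step cost by $\Lambda_2$. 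The initial sqrt-cond call at input $\beta$ and the unique flag-triggered sqrt-cond call at $k = k^\star$ contribute at most $\Lambda_3$ HVPs in total by Lemma~\ref{lem:lanczos}. Combining the iteration bounds from the previous paragraph with these per-iteration budgets gives $(\Lambda_1+1)(K_1+1) + (\Lambda_2+1)(K_2+1) + \Lambda_3$ HVPs, and the asymptotic rate $O(\log(\epsilon_1^{-1}) + \log(\delta^{-1}))$ follows since $K_2 = O(\log(\epsilon_1^{-1}))$ and $\Lambda_3 = O(\log(\delta^{-1}))$ while the remaining factors are problem constants. The main subtlety is that only one sqrt-cond call is spent after entering the local phase, yet the resulting $\beta_{\mathrm{local}}$ must dominate $\sqrt{\mathrm{cond}(\nabla^2 f(x_k))}$ for \emph{all} later iterates; this is exactly what event $A_2$ together with Lemma~\ref{lem:condition-numbers-of-two-points-near-optimal} packages into the bound, and is the one step that genuinely requires the probabilistic analysis rather than deterministic bookkeeping.
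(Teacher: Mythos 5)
Your proposal is correct and follows essentially the same route as the paper: condition on $A_1\cap A_2\cap A_3$ via Lemma~\ref{lem:all-condition-numbers-are-bounded-by-beta}, telescope the $\omega(R_3)$ decrease through the global phase, chain the geometric contraction $\rho_{k+1}\le(1+\alpha_\star)\lambda_f(x_{k+1})^2\le C_2\rho_k$ in the local phase, read off the output guarantees from the CG sandwich, and sum per-iteration CG costs against the $\beta_k$ bounds from Eq.~\eqref{eq:bound-beta-k-global} and Eq.~\eqref{eq:bound-beta-k-local} plus the two sqrt-cond calls. The only detail you leave implicit is that the $+1$ terms in $(\Lambda_1+1)$ and $(\Lambda_2+1)$ account for the single Hessian--vector product spent at each iteration to form $\delta_k=\nabla^2 f(x_k)[h_k,h_k]$.
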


\begin{proof}{Proof}
By Lemma~\ref{lem:all-condition-numbers-are-bounded-by-beta}, it suffices to prove all statements hold deterministically assuming that the event $A_1 \cap A_2 \cap A_3$ holds. For all $k = 0, \ldots, k^\star - 1$, we have $\rho_k > \frac{R_1^2}{\kappa^2}$. Therefore, it follows from Lemma~\ref{lem:newton-cg-one-step-bounds} that $f(x_k)-f(x_{k+1}) \geq \kappa^{-2}\omega(R_3)$. Summing this inequality over $k=0,1,\dots,k^\star-1$ yields ${k^\star}\kappa^{-2}\omega(R_3) \leq f(x_0)-f(x_{k^\star})$. Thus, $k^\star$ must be finite. Lemma~\ref{lem:newton-cg-one-step-bounds-local} implies that we have $\rho_{k+1}\leq C_2\rho_k\leq \frac{R_1^2}{\kappa^2}$ for all $k\geq k^\star$. Thus, we have $(1-\alpha_\star)\epsilon_1^2 \leq \rho_k \leq C_2^{k-k^\star} \rho_{k^\star}\leq C_2^{k-k^\star} \frac{R_1^2}{\kappa^2}$. Therefore, the algorithm terminates at some $k\leq1+\lfloor K_1\rfloor+\max(\lfloor K_2\rfloor,0)$.

Suppose that Algorithm~\ref{algorithm:Newton-CG} returns $x_{M}\not=x_0$, then we have $\rho_0\geq (1-\alpha_\star)\epsilon_0^2$. It follows from Lemma~\ref{lem:cg-apx} that $\lambda_f(x_0)\geq \sqrt{\frac{1-\alpha_\star}{1+\alpha_\star}} \epsilon_0$. Moreover, since Algorithm~\ref{algorithm:Newton-CG} terminates at $x_M$, we have $\rho_M\leq (1-\alpha_\star)\epsilon^2_1$, and thus $\lambda_f(x_M)\leq \epsilon_1$. Suppose that Algorithm~\ref{algorithm:Newton-CG} returns $x_0$, then we have $\lambda_f(x_0)\leq \epsilon_0$ by Lemma~\ref{lem:cg-apx}.

Now, we can estimate the total number of HVPs. By Lemma~\ref{lem:cg-apx}, the total number of HVPs called by CG-Inverse is bounded by $\sum_{k=0}^{k^\star} C_{\mathrm{CG}}\min\{n, \beta_k\log(\alpha_\star^{-1})\} + \sum_{k=1}^{\lfloor K_2 \rfloor+1} C_{\mathrm{CG}}\min\{n, \beta_{k+k^\star}\log(\alpha_\star^{-1})\}$. Moreover, we call sqrt-cond at the zeroth iteration and the $k^\star$-th iteration. Hence, the total number of HVP computations called by sqrt-cond is at most $\Lambda_3$, by Lemma \ref{lem:lanczos} and Eq.~\eqref{eq:bound-beta-k-local}. One HVP is computed in each iteration additionally. Putting these pieces together, the desired result follows from Eq.~\eqref{eq:bound-beta-k-global} and Eq.~\eqref{eq:bound-beta-k-local}. \Halmos
\end{proof}

We are ready to prove the convergence of IPPM, which shares a similar spirit with \cite{Carmon-2018-Accelerated}.
\begin{proof}{Proof of Theorem~\ref{thm:IPPM}}
With probability at least $1-(K+2)\delta'\geq 1-\delta$, the results of Theorem~\ref{thm:convex-SC-newton-CG} hold for the first $\lfloor K \rfloor+2$ calls of Algorithm~\ref{algorithm:Newton-CG}. Hence, if we are able to prove that IPPM must terminate in $\lfloor K \rfloor+2$ iterations under the assumption that the results of Theorem~\ref{thm:convex-SC-newton-CG} hold for all calls of Algorithm~\ref{algorithm:Newton-CG}, then indeed IPPM terminates in $\lfloor K \rfloor+2$ iterations with probability at least $1-\delta$.

It follows from Proposition~\ref{prop:descent}, Lemma~\ref{lem:newton-cg-one-step-bounds}, and Lemma~\ref{lem:newton-cg-one-step-bounds-local} that $f(z_{j+1}) + \tfrac{1}{2}\mu\|z_{j+1}-z_j\|^2= f_j(z_{j+1}) \leq f_j(z_j) = f(z_j)$. If we have $z_{j}\not=z_{j+1}$ for $j=0,1,\dots,N-1$ , then summing this inequality over $j=0,1,\dots,N-1$ yields
\begin{equation}
\label{eq:ippm-outer}
    \tfrac{1}{2}\mu\sum_{j=0}^{N-1} \|z_{j+1}-z_j\|^2 \leq f(z_0) - f(z_N) \leq f(z_0) - \inf_y f(y).
\end{equation}
Let $H_{j} = \nabla^2f(z_{j})+\mu I$. By the fact that $\lambda_{\max}(H_{j}^{-1}) = (\lambda_{\min}(H_{j}))^{-1} \leq \frac{1}{\mu-\ell}$ and the definition of $f_j$, we have $\|z_{j+1}-z_j\|^2\geq (\mu-\ell)(z_{j+1}-z_j)^{\top}H_{j+1}^{-1}(z_{j+1}-z_j) = \frac{\mu-\ell}{\mu^2}\|\nabla f(z_{j+1}) - \nabla f_j(z_{j+1})\|^2_{H_{j+1}^{-1}}$. Thus, we have $\|z_{j+1}-z_j\|^2 \geq \frac{\mu-\ell}{\mu^2}(\|\nabla f(z_{j+1})\|_{H_{j+1}^{-1}} - \|\nabla f_j(z_{j+1})\|_{H_{j+1}^{-1}})^2
= \frac{\mu-\ell}{\mu^2}(\lambda_{f_{j+1}}(z_{j+1}) - \lambda_{f_j}(z_{j+1}))^2
\geq \frac{\mu-\ell}{\mu^2} (\sqrt{\frac{1-\alpha_\star}{1+\alpha_\star}}\epsilon-\epsilon')^2 = \frac{\mu-\ell}{4\mu^2}\epsilon^2$, where the last inequality follows from Theorem \ref{thm:convex-SC-newton-CG}. Therefore, we have $\|z_{j+1}-z_j\|^2\geq \frac{\mu-\ell}{4\mu^2}\epsilon^2$ and thus $N \leq \tfrac{8\mu(f(z_0)-\inf f)}{(\mu-\ell)\epsilon^2}$ by Eq.~\eqref{eq:ippm-outer}.

Suppose that IPPM terminates at $z_{N}=z_{N+1}$, then we have $N\leq \tfrac{8\mu(f(z_0)-\inf f)}{(\mu-\ell)\epsilon^2}$. This implies that IPPM terminates in $\lfloor K\rfloor + 2$ iterations. It follows from Theorem \ref{thm:convex-SC-newton-CG} that $\lambda_{f_{N+1}}(z_{N+1})=\nu_{f,\frac{1}{2}\mu\|\cdot\|^2}(z_{N+1}) \leq \epsilon$.

Let $\Lambda_1 = C_{\mathrm{CG}}\min \left\{ n,\, 2 B e^{C_1 \Delta_f} \log(\alpha_\star^{-1}) \right\}$, $\Lambda_2 = C_{\mathrm{CG}}\min \left\{ n,\, 2 B C_3^6 \log(\alpha_\star^{-1})\right\}$, and $\Lambda_3 = C_L (\min\{ n,\allowbreak\, \beta' \log(n\delta'^{-2}) \} + \min\{ n,\, 2B e^{C_1 \Delta_f} \log(n\delta'^{-2})\})$. For $j\leq N$, we define $\Delta_f^{(j)} = f_{j}(z_{j})-f_j(z_{j+1})$ and $K_1^{(j)} = \frac{\Delta_f^{(j)}}{\omega(R_3)}$. Moreover, we define $\Delta_f^{(N+1)} = 0$ and $K_2 = 2\log_{C_2^{-1}} \frac{R_2}{\kappa \epsilon'}$.  Theorem~\ref{thm:convex-SC-newton-CG} yields that the total number of HVP computations by IPPM is bounded by $\sum_{j=0}^{N+1} [(1+K_1^{(j)})(1+\Lambda_1) + (1+K_2)(1+\Lambda_2) + \Lambda_{3}]\leq \tfrac{1+\Lambda_1}{\omega(R_3)} \sum_{j=0}^{N+1} \Delta_f^{(j)} + (1+\Lambda_2) (N+2)2\log_{C_2^{-1}} \frac{R_2}{\kappa \epsilon'}+ (N+2)(2+\Lambda_1+\Lambda_2+ \Lambda_3)$. The desired result follows from $\sum_{j=0}^{N+1} \Delta_f^{(j)} = \sum_{j=0}^{N+1} (f_{j}(z_{j}) - f_{j}(z_{j+1})) \allowbreak=\sum_{j=0}^{N+1} (f(z_{j}) - f(z_{j+1}) - \frac{1}{2}\mu\|z_{j+1}-z_j\|^2) \leq f(z_0)-\inf f$. \Halmos
\end{proof}

\bibliographystyle{abbrv}
\bibliography{ref}

\end{document}